\newtheorem{theorem}{Theorem}
\newtheorem{definition}{Definition}
\newtheorem{lemma}{Lemma}
\newtheorem{proposition}[theorem]{Proposition}
\newtheorem{remark}{Remark}
\let\e=\varepsilon
\let\p=\partial
\let\O=\Omega
\numberwithin{equation}{section}
\let\hide\iffalse
\let\unhide\fi
\newcommand{\lfty}{L^\infty_{x,\xi}}
\newcommand{\lftyv}{L^\infty_{\xi}}
\newcommand{\R}{\mathbb{R}}
\newcommand{\be}{\begin{equation}}
\newcommand{\bm}{\begin{multline}}
\newcommand{\ee}{\end{equation}}
\newcommand{\dd}{\mathrm{d}}
\newcommand{\xb}{x_{\mathbf{b}}}
\newcommand{\tb}{t_{\mathbf{b}}}
\newcommand{\Bes}{\begin{eqnarray*}}
\newcommand{\Ees}{\end{eqnarray*}}
\newcommand{\Be}{\begin{equation} }
\newcommand{\Ee}{\end{equation}}
\def\p{\partial}
\def\O{\Omega}
\def\R{\mathbb{R}}
\def\B{\begin{equation}}
\def\E{\end{equation}}
\def\BN{\begin{eqnarray*}}
\def\EN{\end{eqnarray*}}
\begin{document}
\title{On regularity of a Kinetic Boundary layer}

 \author{Hongxu Chen}
\address{Department of Mathematics, The Chinese University of Hong Kong, Shatin, N.T., Hong Kong, email:  hongxuchen.math@gmail.com}
 
\date{\today}

\begin{abstract} We study the nonlinear steady Boltzmann equation in the half space, with phase transition and Dirichlet boundary condition. In particular, we study the regularity of the solution to the half-space problem in the situation that the gas is in contact with its condensed phase. We propose a novel kinetic weight and establish a weighted $C^1$ estimate under the spatial domain $x\in [0,\infty)$, which is unbounded and not strictly convex. Additionally, we prove the $W^{1,p}$ estimate without any weight for $p<2$.

\end{abstract}

\maketitle

\section{Introduction}
\subsection{Background}

The regularity issue for the boundary value problem of the Boltzmann equation has been a challenging problem due to the singularity of the characteristic near the boundary and the nonlocal nature of the collision operator. Significant progress has been achieved in a convex bounded domain by Guo-Kim-Tonon-Trescases in \cite{GKTT}, where the boundedness and strict convexity of the domain play crucial roles in the analysis (see \cite{chen2023geometric} for a discussion on curvature). In this paper, we are interested in the regularity estimate in a domain that lacks both boundedness and strict convexity, specifically, we consider the half-space kinetic boundary layer problem. Other domains with similar properties are discussed in Remark \ref{rmk:weight}.

We consider the phase transition problem in the kinetic theory of gas. This problem can be modeled by the steady Boltzmann equation in the half space with slab-symmetry and suitable boundary conditions, and converging to some Maxwellian equilibrium at the far field. Let $F(x,v)$ be the mass density of the gas particle at distance $x>0$ and velocity $v\in \mathbb{R}^3$. Then the half-space problem is formulated as
\begin{equation}\label{boundary_layer_equation}
\begin{cases}
   v_1\partial_x F(x,v)  = Q(F,F)(x,v), \ v\in \mathbb{R}^3, \ x>0, \\
   F(x,v)      \to  M_{1,u,1}(v), \ \ \text{ as } x\to \infty.     
\end{cases}
\end{equation}
The Maxwellian equilibrium in \eqref{boundary_layer_equation} is denoted as $M_{\rho,u,T}(v)$. It is characterized by the constant parameters $\rho \in \mathbb{R}^+$, $u \in \mathbb{R}$, and $T \in \mathbb{R}^+$, which represent the mass density, flow velocity, and temperature, respectively. The equilibrium is expressed as:

\begin{equation}
M_{\rho,u,T}(v) = \frac{\rho}{(2\pi T)^{3/2}} \exp\left(-\frac{(v_1 - u)^2 + v_2^2 + v_3^2}{2T}\right). \label{maxwell_equi}
\end{equation}

In \eqref{boundary_layer_equation}, $Q(F,F)$ is the Boltzmann collision operator. We consider the hard sphere model with an angular cut-off kernel, where $Q$ is defined as
\begin{equation}\label{Q_operator}
Q(F,F)(x,v):= \int_{\mathbb{R}^3\times \mathbb{S}^2} (F(x,v')F(x,v_*') - F(x,v)F(x,v_*)) |(v-v_*)\cdot \omega| \dd \omega \dd v_*.
\end{equation}
In \eqref{Q_operator} $v',v_*'$ represent the post-collision velocity, which can be determined through $\omega \in \mathbb{S}^2$ and the conservation of momentum and energy:
\begin{align*}
    & v' + v_*' = v+v_*, \ \ \  |v'|^2 + |v'_*|^2 =  |v|^2 + |v_*|^2, \\
    & v' = v-((v-v_*)\cdot \omega) \omega, \  v_*' = v_* + ((v-v_*)\cdot \omega)\omega .
\end{align*}

The well-posedness and asymptotic behavior of the linear half-space problem (linearized version of \eqref{boundary_layer_equation}), also known as Milne problem, have been studied in the pioneering work by Bardos-Caflisch-Nicolaenko \cite{bardos1986milne}. The Milne problem serves as a fundamental boundary layer problem with specified incoming boundary condition. This theory has found significant applications in the hydrodynamic limit problem of the kinetic equation when there exhibits a mismatch between the kinetic and fluid boundaries, as demonstrated in \cite{wu2015,ghost,wu}.

For the nonlinear half-space problem \eqref{boundary_layer_equation} with Dirichlet boundary conditions, not all Dirichlet data are admissible and the admissible conditions depend on the far field Maxwellian. When $u$ does not take the singular values $\{0,+\sqrt{5/3},-\sqrt{5/3}\}$, Ukai-Yang-Yu devised a penalization method to construct a unique solution in \cite{ukai2003,ukai2004nonlinear}, and the solution is proved to converge exponentially to $0$ as $x\to \infty$. In this study, the convergence rate and the condition on the admissible boundary data depend on the parameter $u$. When $u$ takes the singular value $u=0$, Golse studied the well-posedness and boundary admissible condition in \cite{golse2008}. 

In our paper, we focus on the case of $0<|u|\ll 1$ with the Dirichlet boundary condition, which corresponds to the phase transition in the condensation-evaporation problem. For more comprehensive details on the boundary admissible condition of these problems, we refer readers to \cite{bardos2006} and the reference therein. We also refer readers to \cite{sone2002, sone2007} for extensive numerical computation on these topics.

To begin, we denote the standard global Maxwellian by $M:=M_{1,0,1}$ with $M_{\rho,u,T}$ defined in \eqref{maxwell_equi}. We apply a change of variable $\xi : = v-(u,0,0),$
and set the Boltzmann equation $F$ \eqref{boundary_layer_equation} as a perturbation around the Maxwellian
\[F(x,v) = (M+\sqrt{M}f)(x,v-(u,0,0)).\]
The equation of the perturbation $f$, with $\xi = v-(u,0,0)$, reads
\begin{equation}\label{equation_f}
\begin{cases}
  (\xi_1+u)\p_x f(x,\xi) +\mathcal{L}f(x,\xi)   = \Gamma(f,f)  \\
  f(x,\xi)   
 \to 0 \text{ as } x\to \infty.     
\end{cases}
\end{equation}
Here $\mathcal{L}f$ is the linear Boltzmann operator, which is defined as
\begin{equation}\label{linear_operator}
 \mathcal{L}f := \nu(\xi) f - K(f)=-\frac{Q(M,\sqrt{M}f)}{\sqrt{M}}-\frac{Q(\sqrt{M}f,M)}{\sqrt{M}}.
\end{equation}
$\Gamma(f,f)$ is the nonlinear Boltzmann operator defined as
\begin{equation}\label{Gamma_f}
\Gamma(f,g) := \frac{Q(\sqrt{M}f,\sqrt{M}g) + Q(\sqrt{M}g,\sqrt{M}f)}{2\sqrt{M}}.    
\end{equation}
The properties of these Boltzmann operators are summarized in Lemma \ref{lemma:k_gamma}.

Now we discuss boundary conditions of \eqref{equation_f}. We assume that the gas is in contact with its condensed phase as in \cite{golse}. At $x=0$, we impose a Dirichlet boundary condition:
\begin{equation}\label{f_b}
f(0,\xi) = f_b(\xi) , \ \ \ \xi_1 + u>0.
\end{equation}

For other boundary conditions such as diffuse boundary condition and specular boundary condition, the well-posedness and asymptotic behavior are obtained in \cite{golse1988, coron1988}, and recently, with continuity, in \cite{huang_diffuse,huang_specular} under different functional spaces. We refer to \cite{jiang2024knudsen,jiang2025knudsen} for more recent progress on the range of Mach number. 

Recently, for $u$ closed to the singular value $0$, Bernhoff and Golse \cite{golse} proposed an elegant approach to establish the well-posedness and asymptotic stability under certain assumptions on the boundary data(also see \cite{liu2013}). In this result, the solution $f$ is proved to exhibit slab symmetry and converge exponentially as $x\to \infty$, with the convergence rate being uniform in $|u|\ll 1$. We document this well-posedness result in Theorem \ref{thm:well-pose} in Section \ref{sec:prelim}.

\hide

If the system~\eqref{equation_f}, \eqref{bc} has a unique solution that is invariant under the action of some orthogonal matrix. We further assume
\begin{equation}\label{bc_symmetry}
f_b(\xi_1,\xi_2,\xi_3) =  f_b(\xi_1,-\xi_2,-\xi_3),
\end{equation}
then $f\circ R$ is also a solution, where

By uniqueness, we have $f\circ R = f$, and thus we focus on the function space
\begin{equation}\label{function_space}
\mathcal{H} := \{\phi\in L^2(\dd \xi)|\phi \circ R = \phi\}. 
\end{equation}

\unhide

\ \\

\subsection{Main result}

While the well-posedness of \eqref{equation_f} is well understood in \cite{golse} and other literature, the regularity of such a boundary layer problem remains open. In this paper, we tackle this problem by constructing the weighted $C^1$ estimate and $W^{1,p},p<2$ estimate without weight.

It is well-known that with the presence of the boundary, the regularity of the Boltzmann equation possesses singularity due to the non-local nature of the Boltzmann operator. As demonstrated in \cite{KL,G,K,GKTT2}, in a general convex domain, the spatial derivative $\nabla_x f(x,\xi)$ generates singularity as $1/(n(\xb)\cdot \xi)$, where $\xb$ corresponds to the backward exit position defined as
\begin{align*}
    & \xb(x,\xi) = x-\tb(x,\xi)\xi, \ \tb(x,\xi) = \sup\{s\geq 0, \ x-s\xi \in \O\}.
\end{align*}

In particular, in \cite{GKTT}, Guo-Kim-Tonon-Trescases proposed a kinetic weight that can compensate the singularity and capture the convexity of domain at the same time. This innovative approach successfully overcomes the challenges posed by the non-local properties of the Boltzmann equation and establishes a weighted $C^1$ estimate. The introduction of kinetic weight has significantly advanced the understanding of the Boltzmann regularity, see \cite{CK, chen2, CKL, Ikun}.

In our specific problem, it is evident that $\partial_x f$ exhibits a singularity as $1/(\xi_1+u)$. However, due to the lack of strict convexity and boundedness in the domain, we need to introduce a new kinetic weight to address this singularity. The proposed kinetic weight is defined as follows:
\begin{definition}[Kinetic weight]
We define
\begin{equation}\label{alpha}
\tilde{\alpha}(x,\xi):=\sqrt{|\xi_1+u|^2+(c\nu_0)^2 x^2}    ,
\end{equation}
where $\nu_0$ corresponds to the lower bound of the collision frequency defined in~\eqref{frequency}. $0<c<1$ is a constant that will be specified later in \eqref{c_8}.

We introduce a cut-off function for the kinetic weight. Define
$\chi: [0,\infty) \rightarrow [0,\infty)$ which stands for a non-decreasing smooth function such that
\Be\label{chi}
\chi (s) = s \ \text{for} \ s \in [0, 1/2 ],  \ \chi(s) = 1 \ \text{for} \ s \in [ 2, \infty ), \ and \  | \chi^\prime(s) | \leq 1 \ \text{for}  \  s \in [0,\infty).
\Ee

Then we introduce a cut-off function to the kinetic weight:
\begin{equation}\label{kinetic_weight}
\alpha(x,\xi): = \chi(\tilde{\alpha}(x,\xi)).
\end{equation}
With the extra cutoff function, we directly have 
\begin{equation}\label{alpha_bdd}
\alpha(x,\xi) \leq 1, \ \ \ 1\leq \frac{1}{\alpha(x,\xi)} .  
\end{equation}    
\end{definition}

\begin{remark}

On the boundary $x=0$, clearly we have
\begin{align}
    &  \alpha(0,\xi) \leq |\xi_1+u|, \ \ \frac{1}{|\xi_1+u|} \leq \frac{1}{\alpha(0,\xi)}. \label{alpha_bdr}
\end{align} 
Thus the singularity $\frac{1}{|\xi_1+u|}$ can be compensated by $\alpha(x,\xi)$ when $|\xi_1+u|\ll 1$.

From \eqref{equation_f}, we can control $\partial_x f(x,\xi)$ using a trivial weight $\xi_1+u$. At the boundary $x=0$, $\partial_x f$ is expected to be discontinuous and possess a singularity as $\frac{1}{\xi_1+u}$. Away from the boundary, $f$ is expected to be continuous; however, the trivial weight $\xi_1+u$ does not provide information regarding this continuity. Our weight \eqref{alpha} is non-zero away from the boundary, and controlling $\alpha \partial_x f$ implies the desired estimate for $\partial_x f$ when $x>0$, as illustrated in Theorem \ref{thm:weight_C1}.

\end{remark}

\begin{remark}\label{rmk:weight}
The kinetic weight proposed in \cite{GKTT} is almost invariant along the characteristic thanks to the strict convexity and boundedness. It is important to note that even under a flat and unbounded domain, our weight \eqref{alpha} still possesses favorable properties similar to \cite{GKTT} in the following two aspects.

I. Due to the presence of the spatial variable $x$ in \eqref{alpha}, our weight $\alpha(x,\xi)$ does not remain invariant along the characteristic. In fact, the rate of change along the characteristic of these weights can be explicitly computed (see Lemma \ref{lemma:velocity} and Lemma \ref{lemma:velocity_alpha}). Importantly, it turns out that the rate of change depends on the choice of $c$. Meanwhile, the collision frequency $\nu$ from the linear Boltzmann operator $\mathcal{L}$ is bounded below by $\nu_0$ according to Lemma \ref{lemma:k_gamma}. By selecting an appropriate $c$, the growth factor $e^{2c\nu_0 s}$ in Lemma \ref{lemma:velocity_alpha} along the characteristic can be compensated by the damped factor $e^{-\nu s}$.

II. The extra spatial variable $x$ in \eqref{alpha} plays a key role in the regularity estimate of the non-local Boltzmann operator. For example, in \eqref{nonlocal_local}, the integral over $\dd \xi'$ does not blow up; instead, it becomes a function of the spatial variable $x$. This allows us to proceed the computation and focus on the $\dd s$ integral. We refer more detailed computation to Lemma \ref{lemma:NLN}.

We anticipate that this type of kinetic weight can be adapted to study the regularity issue in unbounded domains with flat boundaries. For instance, the initial boundary value problem of the Boltzmann equation in \cite{CJ1}, \cite{CKJ} with domain $\mathbb{T}^2\times \mathbb{R}^+$, and in \cite{chen2024global} with domain $\mathbb{R}^2\times (-1,1)$.

\end{remark}

In the following result, we establish the regularity estimate of the solution to the boundary layer problem \eqref{equation_f}. We denote the exponential weight in velocity $\xi$ as
\begin{equation}\label{w_weight}
\begin{split}
    & w(\xi) : = e^{\theta|\xi|^2}, \ \theta< \frac{1}{4}, \ \ \  w_{\tilde{\theta}}(\xi): = e^{\tilde{\theta}|\xi|^2} \text{ for } \tilde{\theta} \ll \theta.
\end{split}
\end{equation}
We define the grazing set as
\begin{equation}\label{grazing}
\mathcal{D}: = \{(0,\xi)|\xi\in \mathbb{R}^3, \ \ \xi_1+u=0\}.
\end{equation}
We denote the inner product and orthongonal matrix as
\begin{align}
    & \langle f \rangle := \int_{\mathbb{R}^3} f(\xi) \dd \xi,  \label{inner_product}
\end{align}
\begin{equation} \label{matrix}
\mathcal{R} := 	\begin{pmatrix} 
	1 & 0 & 0 \\
	0 & -1 & 0\\
	0 & 0 & -1 \\
	\end{pmatrix} .
\end{equation}

\begin{theorem}\label{thm:weight_C1}
Assume $0<|u|<r$ for some $r\ll 1$. Assume that for some $\e \ll 1$, the boundary data $f_b$ \eqref{f_b} satisfy
\begin{equation}\label{f_b_continuous}
f_b\circ \mathcal{R} = f_b, \ f_b \in C(\mathbb{R}^3) \text{ and } \Vert w f_b\Vert_{L^\infty_{\xi}} \leq \e, \ \ \mathcal{R} \text{ defined in } \eqref{matrix},
\end{equation}
and
\begin{equation}\label{extra_assumption}
\langle (\xi_1+u)Y_1[u] \mathfrak{R}_{u,\gamma}[f_b] \rangle = \langle (\xi_1+u)Y_2[u] \mathfrak{R}_{u,\gamma}[f_b]\rangle = 0.
\end{equation}
Here $Y_1[u]$, $Y_2[u]$, and $\mathfrak{R}_{u,\gamma}$ are defined in Appendix \ref{appendix}.

We further assume that \eqref{equation_f} with boundary condition \eqref{f_b} has a unique solution $f$ such that for some $C>0$ and $\gamma \ll 1$,
\begin{align}
    & f\in C(\mathbb{R}^+ \times \mathbb{R}^3 \backslash \mathcal{D}) ,\ \Vert w f(x,\xi)\Vert_{L^\infty_\xi} \leq C \e e^{-\gamma x}.  \label{assume_soln} 
\end{align}
Then for some $C_\infty>0$, the unique solution $f$ satisfies the weighted $C^1$ estimate:
\begin{equation}\label{weighted_C1}
\Vert w_{\tilde{\theta}}\alpha \p_x f\Vert_{L^\infty_{x,\xi}} \leq C_\infty\e e^{-\gamma x} .
\end{equation}
For $1\leq p<2$ and $\gamma_0 < \gamma$, the solution $f$ also satisfies the $W^{1,p}$ and $H^1_{loc}$ estimate:
\begin{align}\label{W1p}
    &\Vert w_{\tilde{\theta}/2}e^{\gamma_0 x}\p_x f\Vert_{L^{p}_{x,\xi}} \leq C_p \e  \text{ for some } C_p>0 ,
\end{align}
\begin{align}\label{local_H1}
    &  \int_{\delta}^\infty \int_{\mathbb{R}^3} w_{\tilde{\theta}}(\xi) e^{2\gamma_0 x}|\p_x f|^2 \dd \xi \dd x \leq C_\delta \text{ for any } \delta>0 \text{ and some } C_\delta>0.
\end{align}

\end{theorem}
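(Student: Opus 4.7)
The plan is to differentiate \eqref{equation_f} in $x$, set $g := \p_x f$, and study its boundary value problem. Since the collision operators are local in $x$ and $\Gamma$ is bilinear, $g$ satisfies
\[
(\xi_1+u)\p_x g + \mathcal{L} g = 2\Gamma(g,f), \quad g(x,\xi)\to 0 \text{ as } x\to\infty,
\]
and its trace on $\{x=0\}$ for $\xi_1+u>0$ is read directly off \eqref{equation_f}:
\[
g(0,\xi) = \frac{-\mathcal{L}f_b(\xi) + \Gamma(f_b,f_b)(\xi)}{\xi_1+u}.
\]
This trace carries the expected $1/(\xi_1+u)$ singularity; multiplying by $\alpha(0,\xi)$ and using \eqref{alpha_bdr} together with $\|wf_b\|_{L^\infty_\xi}\le \e$ yields $|\alpha(0,\xi) w_{\tilde\theta}(\xi) g(0,\xi)|\lesssim \e$. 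The task is to propagate this weighted pointwise bound into the interior through a Duhamel iteration.

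Along the characteristic $s\mapsto x-(\xi_1+u)s$ I would apply the integrating factor $e^{-\nu(\xi)s}$ to obtain
\[
g(x,\xi) = e^{-\nu(\xi) s^*}g(x_b,\xi) + \int_0^{s^*} e^{-\nu(\xi)(s^*-s)}\bigl[Kg + 2\Gamma(g,f)\bigr](x-(\xi_1+u)s,\xi)\,ds,
\]
where $s^*=x/(\xi_1+u)$ and $x_b=0$ for $\xi_1+u>0$, while for $\xi_1+u<0$ we set $s^*=+\infty$ and discard the boundary term using the decay in \eqref{assume_soln}. Multiplying through by $\alpha(x,\xi) w_{\tilde\theta}(\xi)$ and bounding the ratio $\alpha(x,\xi)/\alpha(x-(\xi_1+u)s,\xi)$ via Lemma \ref{lemma:velocity_alpha} produces a growth factor of order $e^{2c\nu_0 s}$; choosing $c$ small enough as in \eqref{c_8}, so that $2c\nu_0<\nu_0\le \nu(\xi)$, absorbs this growth into the integrating factor. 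Taking the convergence rate $\gamma$ in \eqref{assume_soln} smaller than the remaining spectral margin then delivers the desired $e^{-\gamma x}$ decay.

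The main obstacle is the non-local operator $K$, since the crude bound
\[
|\alpha(x,\xi) w_{\tilde\theta}(\xi) Kg(x,\xi)|\le \int |k(\xi,\xi')|\,\alpha(x,\xi)\,w_{\tilde\theta}(\xi)\,|g(x,\xi')|\,d\xi'
\]
does not close directly in $\|\alpha w_{\tilde\theta} g\|_{L^\infty}$: the weight evaluated at $\xi$ need not dominate the one at $\xi'$. Following the double-iteration scheme of \cite{GKTT, CK, chen2, CKL}, I would insert the Duhamel formula once more at the $\xi'$ slot, reducing the control of $K$ to a double integral in $(\xi',\xi'')$ and two time variables. The key point, highlighted in Remark \ref{rmk:weight}.II, is that the spatial dependence of $\alpha$ makes the inner velocity integral $\int |k(\xi,\xi')|/\alpha(x,\xi')\,d\xi'$ a bounded function of $x$ only, so after pulling out $\|\alpha w_{\tilde\theta} g\|_{L^\infty}$ the outer $ds$ integral closes by Lemma \ref{lemma:NLN}. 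The quadratic term $\Gamma(g,f)$ is handled identically and acquires an extra small factor $\|wf\|_{L^\infty}\lesssim \e$, so a Grönwall/contraction argument yields \eqref{weighted_C1}.

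For \eqref{W1p}, I would insert the pointwise bound \eqref{weighted_C1} into the $L^p_{x,\xi}$ norm; the Gaussian factor $w_{\tilde\theta/2}(\xi) w_{\tilde\theta}(\xi)^{-1}$ remains integrable in $\xi$, and the exponential $e^{-p(\gamma-\gamma_0)x}$ tames large $x$. The only obstruction is integrability of $\alpha(x,\xi)^{-p}$ near the corner $(\xi_1+u,x)=(0,0)$: on $\{\tilde\alpha\le 1/2\}$ this equals $\bigl((\xi_1+u)^2+(c\nu_0)^2 x^2\bigr)^{-p/2}$, and polar coordinates $\xi_1+u=r\cos\theta$, $c\nu_0\, x = r\sin\theta$ convert it to $\int r^{1-p}\,dr\,d\theta$, which is finite precisely when $p<2$. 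Finally, \eqref{local_H1} is immediate from \eqref{weighted_C1} since on $\{x\ge\delta\}$ one has $\alpha(x,\xi)\ge \chi(c\nu_0\delta)>0$, so $\alpha^{-2}$ is uniformly bounded and the resulting $L^2$ integral converges.
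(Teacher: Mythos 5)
Your overall scaffolding is sensible (characteristic representation, boundary trace carrying the $1/(\xi_1+u)$ singularity, kinetic weight comparison via Lemma~\ref{lemma:velocity_alpha}, and polar coordinates for the $W^{1,p}$/$H^1_{loc}$ parts, which do match the paper), but the central step --- closing the estimate for $K(\p_x f)$ --- contains a genuine gap. After the second Duhamel iteration you propose to bound $|\p_x f(\cdot,\xi'')|\le \|\alpha w_{\tilde\theta}\p_x f\|_\infty / (\alpha(\cdot,\xi'')w_{\tilde\theta}(\xi''))$ and then invoke Lemma~\ref{lemma:NLN} for the combined $\dd \xi''$ and inner-time integral. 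But Lemma~\ref{lemma:NLN} does not deliver a small prefactor over the full time interval: for $T>1$ the bound is $t/\alpha(x,\xi)$ (see \eqref{nln_large}), and the Duhamel truncation time $t$ must be chosen \emph{large} so that $e^{-\nu_0 t/4}\ll 1$ kills the residual term along the characteristic. With a factor of order $t$ in front of $\|\alpha w_{\tilde\theta}\p_x f\|_\infty$ the inequality cannot be absorbed into the left-hand side, and the Gr\"onwall/contraction argument you sketch does not close. (The quadratic term $\Gamma(\p_x f, f)$ is fine because it carries the extra smallness $\|wf\|_\infty\lesssim\e$, but the linear $K$ has no such helping factor.)

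What the paper does to close this step, and what is missing from your proposal, is the $\e$-splitting of the inner time integral in \eqref{s'_integral} together with a change of variable and integration by parts. On $\{s-s'\le\e\}$, Lemma~\ref{lemma:NLN} (specifically \eqref{nln_e}) gives a prefactor $\sqrt\e+\e\ln t$, which \emph{is} small once $\e=t^{-4}$; on $\{s-s'>\e\}$ the crucial observation \eqref{chain_rule}, namely
\[
\p_x g\bigl(y-(\xi'_1+u)(s-s'),\xi''\bigr) \;=\; \frac{\p_{\xi'_1}\bigl[g(y-(\xi'_1+u)(s-s'),\xi'')\bigr]}{-(s-s')},
\]
converts the $x$-derivative into a $\xi'_1$-derivative at the cost of a factor $1/(s-s')\le 1/\e$. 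An integration by parts in $\xi'_1$ then shifts the derivative onto the smooth kernel $\mathbf{k}$, trading the a priori quantity $\|\alpha\p_x g\|_\infty$ for the known $\|wg\|_\infty$ from Theorem~\ref{thm:continuity}. Only after this split does every coefficient multiplying $\|\alpha w_{\tilde\theta}\p_x g\|_\infty$ become $\frac1t+te^{-\nu_0 t/4}+t^2\gamma$, which is $<1$ for large fixed $t$ and small $\gamma$. Without that mechanism, your argument stalls exactly at the non-local term. A secondary (but real) issue is that you write an a priori bound for $g=\p_x f$ but never establish that $\p_x f$ exists; the paper constructs it via a fixed-point argument for the penalized problem (Lemma~\ref{lemma:linear_boltz_derivative}), whereas the Milne-type boundary condition at $x=\infty$ does not by itself permit you to "read off" $\p_x f$ and run Gr\"onwall.
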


\begin{remark}
The continuity and $w$-weighted $L^\infty$ estimate assumption \eqref{assume_soln} will be justified in Theorem \ref{thm:continuity}.

Bernhoff and Golse construct a unique solution $f$ using polynomial weight $(1+|\xi|^3)$(see Theorem \ref{thm:well-pose}) in \cite{golse}. In this study, they focus on the well-posedness analysis of a penalized problem, which subsequently establishes the well-posedness of the original problem \eqref{equation_f} by eliminating the penalization. To remove the penalization, additional assumptions \eqref{extra_assumption} are introduced. The assumptions \eqref{extra_assumption} also imply that the incoming boundary forms a manifold of co-dimension two for given $u$, as stated in \cite{golse}.

In our paper, we do not incorporate these additional conditions in our analysis. For precise definitions of the functions $Y_1[u]$, $Y_2[u]$, and $\mathfrak{R}_{u,\gamma}$ in \eqref{extra_assumption}, we refer readers to the Appendix \ref{appendix} and also to \cite{golse}.
\end{remark}

\begin{remark}
For the $W^{1,p}$ estimate with $p<2$, the exponential weight $w_{\tilde{\theta}}(\xi)$ and exponential decay $e^{-\gamma x}$ in the weighted $C^1$ estimate \eqref{weighted_C1} guarantee the integrability as $x,|\xi|\to \infty$. For a general convex, smooth, and bounded domain, \cite{CK_2023} establishes a $W^{1,p}$ estimate for $p<3$ by employing control over the backward exit time $\tb \lesssim |n(\xb)\cdot \xi|/|\xi|^2$. This control allows for a gain in local integrability with respect to the singularity $1/(n(\xb)\cdot \xi)$. We notice that in our paper, we do not have the benefit of such an additional control since our domain is unbounded and not strictly convex.

\end{remark}

\begin{remark}
The $H^1_{loc}$ estimate \eqref{local_H1} coincides with the regularity of the Milne problem, as stated in Theorem 3.3 in \cite{bardos1986milne}:
\begin{align*}
    & \int_{\delta}^\infty \int_{\mathbb{R}^3} (1+|\xi|)e^{2\gamma x} |\p_x f|^2 \dd \xi \dd x \leq C_\delta.
\end{align*}
Bardos-Caflisch-Nicolaenko derived this upper bound through an $L^2$ energy estimate to the equation of $\p_x f$.  In our Theorem \ref{thm:weight_C1}, we deal with the nonlinear problem, and we derive \eqref{local_H1} through direct computation using the weighted $C^1$ estimate \eqref{weighted_C1}. In this computation, the extra weight in \eqref{weighted_C1} generates extra singularity $\frac{1}{|\alpha(x,\xi)|^2}$ in integration, which is controllable away from $x=0$, as demonstrated in Section \ref{sec:proof_strategy}.

\end{remark}

The assumption \eqref{assume_soln} is justified in the following theorem.

\begin{theorem}\label{thm:continuity}
There exists $\e$, $r$, $\gamma$ and $C$ such that if the boundary data $f_b$ \eqref{f_b} satisfies
\begin{equation*}
f_b\circ \mathcal{R} = f_b, \ f_b \in C(\mathbb{R}^3) \text{ and } \Vert w f_b\Vert_{L^\infty_{\xi}} \leq \e, \ \ \mathcal{R} \text{ defined in } \eqref{matrix},
\end{equation*}
\begin{equation*}
\langle (\xi_1+u)Y_1[u] \mathfrak{R}_{u,\gamma}[f_b] \rangle = \langle (\xi_1+u)Y_2[u] \mathfrak{R}_{u,\gamma}[f_b]\rangle = 0.
\end{equation*}
for each $0<|u|\leq r$. Then \eqref{equation_f} with boundary condition \eqref{f_b} has a unique solution $f$ that is continuous away from the grazing set: $f\in C(\mathbb{R}^+ \times \mathbb{R}^3 \backslash \mathcal{D})$, and
\begin{equation*}
f(x,\mathcal{R}\xi) = f(x,\xi) \text{ for a.e } (x,\xi)\in \mathbb{R}_+\times \mathbb{R}^3 .
\end{equation*}
Moreover, $f$ satisfies the uniform-in-$u$ decay estimate
\begin{equation*}
 \Vert w f(x,\xi)\Vert_{L^\infty_\xi} \leq C \e e^{-\gamma x}.
\end{equation*}

\end{theorem}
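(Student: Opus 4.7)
The plan is to take the unique solution produced by Bernhoff--Golse (Theorem \ref{thm:well-pose}) in the polynomial-weighted space and upgrade it in three stages: exponential velocity weight, continuity away from $\mathcal{D}$, and symmetry under $\mathcal{R}$.

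Step 1 (weight upgrade). I would rewrite \eqref{equation_f} along characteristics after splitting $\mathcal{L} = \nu(\xi) - K$: for incoming $\xi_1 + u > 0$,
\begin{equation*}
f(x,\xi) = e^{-\nu(\xi)x/(\xi_1+u)}f_b(\xi) + \int_0^x \frac{e^{-\nu(\xi)(x-s)/(\xi_1+u)}}{\xi_1+u}\bigl[Kf + \Gamma(f,f)\bigr](s,\xi)\,\dd s,
\end{equation*}
and for outgoing $\xi_1 + u < 0$ the analogous formula integrating from $+\infty$, where $f$ vanishes. Multiplying by $w(\xi)$ and invoking the Grad-type commutator $w(\xi)K(w^{-1}g)(\xi) = K_w g(\xi)$ with $K_w$ bounded on $L^\infty_\xi$, together with the bilinear bound $\Vert w\Gamma(f,g)/\nu\Vert_{L^\infty} \lesssim \Vert wf\Vert_{L^\infty}\Vert wg\Vert_{L^\infty}$ (both from Lemma \ref{lemma:k_gamma}), the iteration closes
\begin{equation*}
\Vert wf\,e^{\gamma x}\Vert_{L^\infty_{x,\xi}} \lesssim \Vert wf_b\Vert_{L^\infty_\xi} + \bigl(\Vert wf\,e^{\gamma x}\Vert_{L^\infty_{x,\xi}}\bigr)^2,
\end{equation*}
where the exponential-in-$x$ rate $\gamma$ is borrowed from \cite{golse}. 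Smallness of $\e$ then yields the claimed estimate, and the corresponding linearized contraction in the same norm gives uniqueness at the exponential-weight level.

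Step 2 (continuity and symmetry). For any $(x,\xi) \notin \mathcal{D}$ with $\xi_1 + u \neq 0$, the characteristic is transverse to $\{x=0\}$, and the Duhamel representation exhibits $f$ as a continuous function of $(x,\xi)$ because $Kf + \Gamma(f,f)$ is jointly continuous (Grad smoothing of $K$ plus bilinear continuity of $\Gamma$ in the $w$-weighted $L^\infty$ topology). At points with $x > 0$ and $\xi_1 + u \to 0^+$, the boundary factor $e^{-\nu(\xi)x/(\xi_1+u)}$ vanishes faster than any polynomial, killing the data contribution, while the source integral remains continuous across the grazing hyperplane; hence $f\in C(\mathbb{R}^+\times\mathbb{R}^3\setminus\mathcal{D})$. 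For symmetry, note that $\nu$, $K$, $\Gamma$, and the transport operator $(\xi_1+u)\partial_x$ are invariant under $\xi \mapsto \mathcal{R}\xi$, so $(x,\xi)\mapsto f(x,\mathcal{R}\xi)$ solves the same problem with boundary data $f_b\circ\mathcal{R} = f_b$; uniqueness forces it to coincide with $f$.

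The main obstacle is respecting the orthogonality structure \eqref{extra_assumption} throughout the weight upgrade. Bernhoff--Golse construct $f$ by solving a penalized problem and then selecting the boundary data in the codimension-two manifold cut out by $\langle (\xi_1+u)Y_i[u]\mathfrak{R}_{u,\gamma}[f_b]\rangle = 0$, and my iteration must preserve this selection at every stage so that the limit solves the original, unpenalized equation. A secondary technical point is uniformity as $|u| \to 0$: the spectral gap of the linearized half-space operator degenerates at the singular value $u=0$, so one must check that the constants in the Grad commutator, in $K_w$, and in the decay rate $\gamma$ all remain uniform for $0 < |u| < r$ with $r$ sufficiently small. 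Matching the simultaneous limits $x\to 0^+$ and $\xi_1 + u \to 0$ in the continuity argument, so as to guarantee that the grazing set $\mathcal{D}$ is the only removable singularity, is the final delicate item.
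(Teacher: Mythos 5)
Your Step 1 iteration inequality is missing the crucial linear term. After multiplying the Duhamel formula by $w(\xi)e^{\gamma x}$, the contribution of $Kf$ yields
\begin{equation*}
\Vert w f e^{\gamma x}\Vert_{L^\infty_{x,\xi}} \lesssim \Vert w f_b\Vert_{L^\infty_\xi} + C_K\,\Vert w f e^{\gamma x}\Vert_{L^\infty_{x,\xi}} + \Vert w f e^{\gamma x}\Vert_{L^\infty_{x,\xi}}^2,
\end{equation*}
and the linear constant $C_K$ coming from the boundedness of $K_w$ and the time integral is $O(1)$, not small. So the inequality you wrote simply drops a term that dominates everything else. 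This is precisely the standard obstruction that prevents a naive $L^\infty$ iteration for the Boltzmann linear operator: $K$ is not a contraction. To close, you need an additional input in the bounded-velocity region: either split the $\dd\xi'$ integral into $\{|\xi'|>N \text{ or } |\xi-\xi'|<1/N\}$ (where $\int\mathbf{k}_\theta = o_N(1)$) and $\{|\xi'|\leq N,\ |\xi-\xi'|\geq 1/N\}$, and in the latter invoke the polynomial-weighted bound $\Vert(1+|\xi|)^3 f e^{\gamma x}\Vert_{L^\infty}\lesssim\e$ from Theorem~\ref{thm:well-pose}; or, as the paper does, control the low-velocity region by an $L^2$ estimate. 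Either way, a genuinely new ingredient is needed that your write-up does not supply.

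The paper does not bootstrap from the polynomial-weight solution at all. Instead it re-derives the $w$-weighted $L^\infty$ bound and continuity from scratch at the level of the penalized problem: first the eigenfunction $\phi_u$ gets an exponential-weight bound via the conjugated operator $\mathcal{L}_\theta$ (Lemma~\ref{lemma:eigen_continuous}); then Proposition~\ref{prop:pen_continuous} establishes the weighted $L^\infty$ bound for the linearized penalized problem by an $L^\infty$–$L^2$ coupling (the $L^2$ norm is controllable here precisely because the penalization supplies coercivity in the degenerate directions of $\mathcal{L}$), followed by a $\lambda$-continuation fixed-point argument in a Banach space of continuous functions to get the continuity; then Proposition~\ref{prop:nonlinear_continuous} runs a second contraction for the coupled $(g,h)$ system; finally $f=e^{-\gamma x}(g-h\phi_u)$ inherits both properties. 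Your Step 2 continuity claim — that $Kf+\Gamma(f,f)$ is "jointly continuous" so the Duhamel integral is too — is circular as stated: continuity of $f$ is the thing to be proven, and the paper's $\lambda$-continuation fixed-point in the space $\mathcal{X}$ from \eqref{banach} is exactly the device that supplies the missing argument. If you patch Step 1 with the $N$-split using Theorem~\ref{thm:well-pose} and replace Step 2's claim with a fixed-point argument in the space of continuous, $w$-weighted functions, your route becomes a viable alternative bootstrap — shorter than the paper's but relying more heavily on the statement of Theorem~\ref{thm:well-pose} as a black box. As written, though, both steps contain genuine gaps.
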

\begin{remark}\label{remark:w_weight}
The $w$-weighted $L^\infty$ estimate \eqref{assume_soln} is crucial in constructing the weighted $C^1$ regularity \eqref{weighted_C1} in Theorem \ref{thm:weight_C1}. As stated in \eqref{Gamma_est} in Lemma \ref{lemma:k_gamma}, the nonlinear operator $\Gamma$ shares a similar structure with the linear operator $Kf$ \eqref{Kf} by employing the weight $w$. This allows us to control the contribution of both $K$ and $\Gamma$ in the weighted $C^1$ estimate using a uniform strategy.

\end{remark}

\ \\

\subsection{Difficulty and main idea}\label{sec:proof_strategy} 

\ 

\textbf{1. Continuity proof by penalization.} To begin, we justify the continuity and $w$ weighted $L^\infty$ estimate assumption \eqref{assume_soln} in Theorem \ref{thm:continuity}. Following the approach in \cite{golse}, we introduce a linearized penalized problem, as described in Proposition \ref{prop:pen_well_pose}. The construction of the penalized operator is based on solving the eigen-value problem \eqref{eigen_prob}. In \cite{golse}, the corresponding eigen-function $\phi_u$ is proved to be bounded in $L^\infty$ with polynomial weight $(1+|\xi|)^s$. To prove $\Vert wg\Vert_{L^\infty_{x,\xi}}$, the first step is to control the eigen-function $\phi_u$ in $L^\infty$ with exponential weight. We use a modified linear Boltzmann operator,
\begin{align*}
    \mathcal{L}_\theta \phi_u  & = -\frac{Q(M,\sqrt{M}e^{-\theta |\xi|^2} \phi_u)+Q(\sqrt{M}e^{-\theta |\xi|^2} \phi_u, M)}{\sqrt{M}e^{-\theta |\xi|^2}}   
  = \nu(\xi) \phi_u - K_\theta \phi_u.
\end{align*}
The modified linear operator $K_\theta$ shares a structure similar to the original operator $K$ (see Lemma \ref{lemma:k_theta}). The eigen-value problem associated with the modified operator can be solved in the same spirit. We obtain the eigen-function to the original problem as $e^{-\theta |\xi|^2} \phi_u$ with the desired weight(see Lemma \ref{lemma:eigen_continuous} for detail). 

Subsequently, we employ a fixed-point argument in the $w$-weighted $L^\infty$ space to establish the well-posedness and the continuity, for the linearized penalized problem in Proposition \ref{prop:pen_continuous}, and after, for the nonlinear penalized problem, in Proposition \ref{prop:nonlinear_continuous}.

\textbf{2. Regularity}

\textbf{2-a. Weighted $C^1$ estimate.} The construction of the derivative involves fixed-point argument to the penalized problem \eqref{pen_prob} with an a-priori weighted $C^1$ estimate. In the following, we illustrate the main difficulty and idea in the a-priori estimate(Lemma \ref{lemma:aprori_linear}).

The difficulty of the singularity $\frac{1}{\xi_1+u}$ is addressed by the proposed kinetic weight $\alpha(x,\xi)$. To control its rate of change along the characteristic(Lemma \ref{lemma:velocity_alpha}), we set $c=1/8$ and thus this growth rate can be controlled by the collision frequency $\nu$ in \eqref{linear_operator}.

The derivative along the characteristic is given by the piece-wise formula \eqref{p_x_1}-\eqref{p_x_6}. The \textit{main difficulty} comes from the contribution of the linear operator $K(\p_x g)$ in \eqref{p_x_4}. By employing the Grad estimate \eqref{Kf} to $K(\p_x g)$, we see a $\dd \xi'$ integration in this term as
\begin{align*}
    & \int^t_0 \dd s e^{-\nu(t-s)} \int_{\mathbb{R}^3} \mathbf{k}(\xi,\xi') \p_x g(x-(\xi_1+u)(t-s),\xi') \dd \xi'. 
\end{align*}
From a standard technique, we iterate along the characteristic with velocity $\xi'$ again and obtain a double Duhamel's formula(in \eqref{kk}):
\begin{align*}
    & \int_0^t \dd s e^{-\nu(t-s)} \int_{\mathbb{R}^3} \mathbf{k}(\xi,\xi') \int^s_0 \dd s' e^{-\nu(s-s')} \int_{\mathbb{R}^3}\mathbf{k}(\xi',\xi'') \p_x g(x-(\xi_1+u)(t-s) - (\xi_1'+u)(s-s'),\xi'').
\end{align*}

To control this term, we expect to utilize the estimate on $\Vert g\Vert_\infty$ or gain a smallness contribution from the time integration. To achieve our goal, we split the $\dd s'$ integral into two regions: $s-s'\leq \e$ and $s-s'>\e$. When $s-s'>\e$, we observe the following change of variable
\begin{align*}
    & \frac{\p_{\xi'_1} [g(y-(\xi'_1+u)(s-s'),\xi'')]}{-(s-s')}    =  \p_x g(y-(\xi'_1+u)(s-s'),\xi''). 
\end{align*}
With the lower bound of $s-s'$, we convert the $x$-derivative $\p_x g$ into a velocity $\xi_1$ derivative $\p_{\xi_1}[g]$. Via an integration by part, we remove the derivative in $\p_{\xi_1} [g]$ and control the $g$  using the $L^\infty$ estimate mentioned in the previous paragraphs(Theorem \ref{thm:continuity}). On the other hand, the case $s-s'<\e$ corresponds to a small time contribution, we need to incorporate the nonlocal operator $K(\p_x g)$ with the kinetic weight $\alpha$. By introducing the kinetic weight $\alpha$ in the Duhamel's formula and isolating $\Vert \alpha \p_x g\Vert_{L^\infty_\xi}$, we arrive at the following integral: 
\begin{align}
    &\int_{t-\e}^t \dd s e^{-\nu(\xi)(t-s)}\int_{\mathbb{R}^3} \dd \xi' \frac{e^{-C|\xi-\xi'|^2}}{|\xi-\xi'|} \frac{1}{\alpha(x-(t-s)(\xi_1+u),\xi')}.\label{nonlocal_local}
\end{align}
This type of estimate was first studied in \cite{GKTT} when the domain is bounded and strictly convex. In our case, the integrability in $\dd \xi'$ arises from the additional $x^2$ term in \eqref{alpha}. The $\dd \xi'$ integration can be explicitly computed as a function of $\xi,s$ (see \eqref{ln_bdd_eta} and \eqref{ln_bdd_1} for detail): 
\begin{align*}
    & \int_{t-\e}^t \dd s e^{-\nu(\xi)(t-s)} \ln \Big(1+ \Big|\frac{1}{x-(\xi_1+u)(t-s)}\Big|^2 \Big). 
\end{align*}
We aim to control this integral by $\frac{1}{\alpha(x,v)}$ to close the weighted $C^1$ estimate $\Vert \alpha \p_x g\Vert_\infty.$ Despite the lack of strict convexity and boundedness, our setting has one beneficial aspect - the spatial variable is one-dimensional. Along the characteristic, the spatial variable can be explicitly computed as $x - (\xi_1 + u)(t - s)$. We tackle this integration by carefully comparing the scale of $\frac{1}{x - (\xi_1 + u)(t - s)}$ with the scale of $\frac{1}{\alpha(x,v)}$(approximately $\frac{1}{\sqrt{|\xi_1 + u|^2 + x^2}}$ in \eqref{alpha}). We refer the detailed analysis and conclusion of this integral to Lemma \ref{lemma:NLN}.

The contribution of the nonlinear operator \eqref{p_x_6} can be controlled in the same spirit. The corresponding integral reads
\begin{align*}
    & \int^t_0 \dd s e^{-\nu(t-s)}  \p_x\Gamma(g,g)(x-(\xi_1+u)(t-s),\xi) \dd \xi \\
    &\lesssim \Vert wg\Vert_{L^\infty_{x,\xi}} \int_0^t \dd e^{-\nu(t-s)} \int_{\mathbb{R}^3} \mathbf{k}(\xi,\xi') |\p_x g(x-(\xi_1+u)(t-s),\xi')| \dd \xi'.
\end{align*}
Here the $w$ weight plays an important role to have the kernel $\mathbf{k}(\xi,\xi')$ in the integration, as emphasized in Remark \ref{remark:w_weight}. The smallness of $\Vert wg\Vert_{L^\infty_{x,\xi}}$ plays the same role as the small time contribution discussed in the previous paragraph. In fact, the large time integration includes an additional growing factor in the time scale $t$, as stated in Lemma \ref{lemma:NLN}. Consequently, we set $t$ to be large but fixed, and absorb it using the smallness of $\Vert wg\Vert_{L^\infty_{x,\xi}}$.

\textbf{2-b. $W^{1,p}$ estimate.} The $W^{1,p}$ estimate without weight in \eqref{W1p} is derived after obtaining the weighted $C^1$ estimate. In the previous paragraph we establish the weighted $C^1$ estimate to the penalized problem $g$. The solution $f$ to \eqref{equation_f} is set to be (see Section \ref{sec:proof_contin} for detail)
\[f(x,\xi) = e^{-\gamma x} g(x,\xi).\]
The exponential decay factor $e^{-\gamma x}$ and the velocity weight in $\Vert w_{\tilde{\theta}}(\xi)\alpha \p_x g\Vert_{L^\infty_{x,\xi}}$ provide the integrability as $x,|\xi|\to \infty$. In \eqref{W1p}, the constraint of $p<2$ arises from the local integrability of the term $\frac{1}{\alpha^p(x,\xi)}$. The local integral of $\frac{1}{\alpha^p(x,\xi)}\sim \frac{1}{(|\xi_1+u|^2 + x^2)^{p/2}}$ corresponds to a two-dimensional integration involving the variables $\xi_1$ and $x$. Therefore, this integral over $x\in (0,1)$ is bounded for $p<2$, and the integral over $x\in (\delta,1)$ is bounded by some constant $C_\delta$ for $\delta>0$ and $p=2$. This justifies the $H^1_{loc}$ estimate in \eqref{local_H1}.

\ \\

\textbf{Outline.} In Section \ref{sec:prelim}, we provide the properties of the kinetic weight and Boltzmann operator as preliminary material. In Section \ref{sec:continuity}, we introduce the penalized problem and analyze its continuity as well as the exponential weighted $L^\infty$ estimate. This analysis allows us to justify the assumption \eqref{assume_soln} and conclude Theorem \ref{thm:continuity} after removing the penalization. In Section \ref{sec:regularity}, we apply the properties of kinetic weight to conclude Theorem \ref{thm:weight_C1} by establishing the weighted $C^1$ and $W^{1,p}$ estimate for $p<2$.

\ \\

\section{Preliminary}\label{sec:prelim}

First, we document the well-posedness result of \eqref{equation_f} in \cite{golse}.

\begin{theorem}[Theorem 2.1 in \cite{golse}]\label{thm:well-pose}
There exists $\e$, $r$, $\gamma$ and $C$ such that if the boundary data $f_b$ \eqref{f_b} satisfies
\begin{equation}\label{bdr_small}
f_b\circ \mathcal{R} = f_b \text{ and } \Vert (1+|\xi|)^3 f_b\Vert_{L^\infty_{\xi}} \leq \e,
\end{equation}
for each $0<|u|\leq r$. Then \eqref{equation_f} with boundary condition \eqref{f_b} has a unique solution that satisfies
\begin{equation*}
f(x,\mathcal{R}\xi) = f(x,\xi) \text{ for a.e } (x,\xi)\in \mathbb{R}_+\times \mathbb{R}^3 
\end{equation*}
and the uniform decay estimate
\begin{equation*}
 \Vert (1+|\xi|)^3 f(x,\xi)\Vert_{L^\infty_\xi} \leq C \e e^{-\gamma x}
\end{equation*}
if and only if the boundary data $f_b$ satisfies further conditions
\begin{equation*}
\langle (\xi_1+u)Y_1[u] \mathfrak{R}_{u,\gamma}[f_b] \rangle = \langle (\xi_1+u)Y_2[u] \mathfrak{R}_{u,\gamma}[f_b]\rangle = 0.
\end{equation*}
Here $Y_1[u]$, $Y_2[u]$, and $\mathfrak{R}_{u,\gamma}$ are defined in Appendix \ref{appendix}.
\end{theorem}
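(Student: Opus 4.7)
The plan is to follow the penalization strategy sketched in \cite{golse}: a direct contraction on \eqref{equation_f} fails because the transport--collision operator $(\xi_1+u)\p_x+\mathcal{L}$ loses its spectral gap as $u\to 0$. I would first perform a perturbative spectral analysis of this operator near the singular value $u=0$. The collision invariants span a five--dimensional kernel of $\mathcal{L}$, and after restriction to the half-space exactly two of the associated hydrodynamic modes become unstable (exponentially growing in $x$) for small nonzero $u$. These two unstable directions are precisely the objects $Y_1[u]$, $Y_2[u]$ and their duals $\mathfrak{R}_{u,\gamma}[\cdot]$ produced in Appendix \ref{appendix}; they must be constructed as analytic perturbations of the hydrodynamic modes with estimates uniform in $0<|u|\leq r$.

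With the modes in hand, I would introduce a linear penalization operator $\mathcal{P}_u$ that projects onto these two bad directions and study the penalized linear problem
\[
(\xi_1+u)\p_x g + \mathcal{L}g + \mathcal{P}_u g = S,\qquad g(0,\xi)=f_b(\xi)\text{ for }\xi_1+u>0.
\]
The penalization replaces the zero eigenvalue by a spectral gap independent of $u$, so the standard mild--form iteration closes in the polynomial-weighted $L^\infty$ space and gives
\[
\Vert (1+|\xi|)^3 e^{\gamma x} g\Vert_{L^\infty_{x,\xi}} \ls \Vert (1+|\xi|)^3 f_b\Vert_{L^\infty_\xi} + \Vert (1+|\xi|)^3 e^{\gamma x} S\Vert_{L^\infty_{x,\xi}}.
\]
The weight $(1+|\xi|)^3$ is compatible with the Grad splitting $\mathcal{L}=\nu-K$ via the usual decomposition of $K$ against a kernel $\mathbf{k}(\xi,\xi')$. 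A contraction mapping on the ball $\Vert (1+|\xi|)^3 e^{\gamma x}g\Vert_{L^\infty_{x,\xi}}\leq C\e$, together with the bilinear bound for $\Gamma(g,g)$ under the same weight, then yields a unique small solution $g$ of the nonlinear penalized problem. The symmetry $g\circ\mathcal{R}=g$ is propagated through the iteration since $\mathcal{R}$ commutes with $\mathcal{L}$, $\Gamma$, $\mathcal{P}_u$ and preserves $f_b$; uniqueness transfers the symmetry to the fixed point.

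The final step is to remove the penalization. Pairing the penalized equation against the dual modes $\mathfrak{R}_{u,\gamma}[\cdot]$, integrating in $x$, and using the half-space Green identity, I would reduce the identity $\mathcal{P}_u g\equiv 0$ to the two scalar boundary conditions
\[
\langle (\xi_1+u)Y_j[u]\mathfrak{R}_{u,\gamma}[f_b]\rangle = 0,\qquad j=1,2,
\]
proving the ``if'' direction; conversely, any genuine solution $f$ of \eqref{equation_f} with exponential decay can be tested against $Y_j[u]$ to force the same conditions, giving the ``only if'' direction. The uniform decay $\Vert (1+|\xi|)^3 f(x,\xi)\Vert_{L^\infty_\xi} \leq C\e e^{-\gamma x}$ and the uniqueness of $f$ then inherit from the corresponding properties of $g$.

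The hard part will be the uniform-in-$u$ spectral analysis underlying the construction of $Y_j[u]$, $\mathfrak{R}_{u,\gamma}$, and the penalization $\mathcal{P}_u$: one must produce a perturbation theory for the half-space eigenvalue problem that is analytic across the singular value $u=0$, delivers a decay rate $\gamma$ independent of $u$, and interacts cleanly with the $(1+|\xi|)^3$ weight in which the fixed-point argument is run. Once this spectral input is in place, the remaining transport--collision analysis is standard half-space $L^\infty$ fixed-point machinery.
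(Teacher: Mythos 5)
This theorem is not proved in the paper you are reading: it is Theorem~2.1 of Bernhoff--Golse, cited verbatim as background, and the paper only \emph{uses} it. What the paper does reproduce, in Section~\ref{sec:continuity} and Appendix~\ref{appendix}, is the scaffolding of the argument from \cite{golse}: the eigen-value problem \eqref{eigen_prob}, the penalized linear operator \eqref{pen_lin_op}, the nonlinear penalized system \eqref{pen_nonlin_prob}, and Lemma~\ref{lemma:remove} for removing the penalization. Measured against that scaffolding, your sketch gets the broad architecture right --- spectral/perturbation preparation, penalization to restore a uniform-in-$u$ gap, mild-form contraction in a polynomial-weighted $L^\infty_\xi$ space, symmetry propagation, and a solvability condition for removing the penalization --- so you have identified the right method.

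However, there are two concrete misstatements that would derail a real write-up. First, you describe $\mathfrak{R}_{u,\gamma}[\cdot]$ as a ``dual'' of the unstable modes $Y_j[u]$. It is not: by \eqref{def_R}, $\mathfrak{R}_{u,\gamma}[f_b] := g(0,\cdot)$ is the trace at $x=0$ of the solution to the \emph{nonlinear} penalized problem \eqref{pen_nonlin_prob} with boundary datum $f_b$. It is therefore a nonlinear map from boundary data to boundary traces, constructed \emph{after} the penalized fixed point is obtained, not a linear dual functional that one ``pairs the equation against.'' Your proposed Green-identity pairing step would have to be replaced by the matrix analysis of Lemma~\ref{lemma:remove}: the coefficients in the penalization $\mathcal{L}^p$ generate a $3\times 3$ matrix $\mathcal{A}$ whose left eigenvectors build $Y_1[u],Y_2[u]$, and the vanishing of the penalization is equivalent to the two scalar conditions $\langle(\xi_1+u)Y_j[u]\mathfrak{R}_{u,\gamma}[f_b]\rangle=0$, $j=1,2$. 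Second, the penalization is not simply a rank-two projection onto ``two bad directions'': \eqref{pen_lin_op} has three components, $2\gamma\prod_+((\xi_1+u)g)$, $2\gamma\mathbf{p}_u g$, and the multiplier $-\gamma(\xi_1+u)g$, built around the generalized eigen-function $\phi_u$ of $\mathcal{L}\phi_u=\tau_u(\xi_1+u)\phi_u$ (Proposition~\ref{prop:eigen_exists}). That eigen-value problem is the central spectral input, and your sketch does not identify it. Absent those two corrections, the ``only if'' direction and the precise form of the removal conditions cannot be recovered from your outline.
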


\subsection{Properties of the Boltzmann operator in \eqref{linear_operator} and \eqref{Gamma_f}.}

\begin{lemma}\label{lemma:k_gamma}
The linear Boltzmann operator $K(f)$ in~\eqref{equation_f} is given by
\begin{align}
& Kf(x,\xi)=\int_{\mathbb{R}^3}\mathbf{k}(\xi,\xi')f(x,\xi')\dd \xi'. \label{Kf}  
\end{align}

The kernel $\mathbf{k}(\xi,\xi')=\mathbf{k}_1(\xi,\xi')+\mathbf{k}_2(\xi,\xi')$ is given by the Grad estimate \cite{R}:
\begin{align}
\mathbf{k}_1(\xi,\xi')   &= C_{\mathbf{k}_1}|\xi-\xi'|e^{-\frac{|\xi|^2+|\xi'|^2}{4}}, \label{k1}\\
 \mathbf{k}_2(\xi,\xi')  & = C_{\mathbf{k}_2}\frac{1}{|\xi-\xi'|}e^{-\frac{1}{8}|\xi-\xi'|^{2}-\frac{1}{8}
\frac{(|\xi|^{2}-|\xi'|^{2})^{2}}{|\xi-\xi'|^{2}}}. \label{k2}
\end{align}

The kernel satisfies
\begin{equation*}
 |\mathbf{k}  (\xi,\xi')| \lesssim \mathbf{k}^\varrho (\xi,\xi'), \  \ \mathbf{k}^\varrho (\xi,\xi') := e^{- \varrho |\xi-\xi'|^2}/ |\xi-\xi'|.    
\end{equation*}

For $\nu$ and $\Gamma$ in~\eqref{equation_f}, we have
\begin{equation}\label{nablav nu}
\nu(\xi) \gtrsim 1+|\xi|,\quad |\nabla_\xi \nu(\xi)| \lesssim 1,
\end{equation}

\begin{equation}\label{Gamma bounded}
\begin{split}
 \Big\Vert \frac{w}{1+|\xi|}\Gamma(f,g)(x,\xi) \Big\Vert_{L^\infty_{x,\xi}}    & \lesssim \Vert wf\Vert_{L^\infty_{x,\xi}}\times \Vert wg\Vert_{L^\infty_{x,\xi}},
\end{split}
\end{equation}

\begin{equation}\label{Gamma_est}
\begin{split}
 | \nabla_x \Gamma(f,g)(x,\xi) | &\lesssim [1+|\xi|] \| wf \|_{\lfty}
  | \nabla_x g(x,\xi) |  + \Vert wf\Vert_{\lfty}
  \int_{\R^3} \mathbf{k}(\xi,\xi')
 |\nabla_x g(x,\xi')|
  \dd \xi'  \\
  & + \Vert wg\Vert_{\lfty} \int_{\mathbb{R}^3} \mathbf{k}(\xi,\xi') |\nabla_x f(x,\xi')| \dd \xi'.
\end{split}
\end{equation}

\end{lemma}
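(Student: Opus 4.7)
The lemma assembles classical facts about the hard-sphere linear and bilinear Boltzmann operators, so my plan is to collect each ingredient from its standard source and adapt it to the notation of this paper. The overall approach has no single hard step; the work consists of routine but careful bookkeeping in four short arguments, chiefly following Grad as cited in \cite{R}.

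For the Grad representation \eqref{Kf}--\eqref{k2}, I would expand $Q(M,\sqrt{M}f)+Q(\sqrt{M}f,M)$, use the collision invariant $M(\xi)M(\xi_*)=M(\xi')M(\xi'_*)$, and perform the Carleman change of variables $(\xi_*,\omega)\mapsto(\xi',\xi_*)$ on the gain part. The loss contribution reproduces $\nu(\xi)f$, and the gain yields the two explicit kernels $\mathbf{k}_1$ (from the surface parameterization) and $\mathbf{k}_2$ (from the pole contribution). For the unified pointwise bound $|\mathbf{k}|\lesssim \mathbf{k}^\varrho$, I would use $|\xi|^2+|\xi'|^2=\tfrac12|\xi-\xi'|^2+\tfrac12|\xi+\xi'|^2$ in $\mathbf{k}_1$ to extract Gaussian decay in $|\xi-\xi'|$ dominating the linear prefactor, and for $\mathbf{k}_2$ drop the nonnegative second term in the exponent. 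For \eqref{nablav nu}, the hard-sphere identity $\nu(\xi)=\int_{\R^3\times\S^2}|(\xi-\xi_*)\cdot\omega|M(\xi_*)\dd\omega\,\dd\xi_*$ gives $\nu(\xi)\sim 1+|\xi|$ by a direct estimate against the Gaussian, and $|\nabla_\xi\nu|\lesssim 1$ follows from differentiating under the integral with $|\omega|=1$.

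For the bilinear estimates \eqref{Gamma bounded}--\eqref{Gamma_est}, I split $\Gamma=\Gamma_+-\Gamma_-$. The loss part is
\[
\Gamma_-(f,g)(\xi)=\tfrac12[f(\xi)L_g(\xi)+g(\xi)L_f(\xi)],\qquad L_h(\xi)=\int|(\xi-\xi_*)\cdot\omega|\sqrt{M(\xi_*)}h(\xi_*)\dd\omega\,\dd\xi_*,
\]
and when $\|wh\|_{\lfty}<\infty$ with $\theta<1/4$ in \eqref{w_weight}, $L_h(\xi)\lesssim (1+|\xi|)\|wh\|_{\lfty}/w(\xi)$ since $\sqrt{M}w$ is integrable. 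The gain part $\Gamma_+$ admits a Grad-type representation via the same Carleman change of variables; pulling one of its two arguments out in $\|w\cdot\|_{\lfty}$ converts the remaining integral into one against a Grad-type kernel in the other variable. Together these give \eqref{Gamma bounded}. For \eqref{Gamma_est}, since $\Gamma$ acts only on velocity, $\nabla_x\Gamma(f,g)=\Gamma(\nabla_x f,g)+\Gamma(f,\nabla_x g)$; applying the bilinear decomposition termwise, the pointwise term $[1+|\xi|]\|wf\|_{\lfty}|\nabla_x g(x,\xi)|$ absorbs the loss piece $\nabla_x g\cdot L_f$, while the remaining loss pieces and the two gain contributions yield the two kernel integrals.

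The delicate step I would flag as the main obstacle is the absorption of the loss contribution $f(\xi)L_{\nabla_x g}(\xi)$ into the integral $\|wf\|_{\lfty}\int\mathbf{k}(\xi,\xi')|\nabla_x g(x,\xi')|\dd\xi'$ in \eqref{Gamma_est}. After bounding $|f(\xi)|\leq \|wf\|_{\lfty}/w(\xi)$, one must check that the effective kernel $|\xi-\xi_*|\sqrt{M(\xi_*)}/w(\xi)$ is dominated by a Grad-type kernel of $\mathbf{k}^\varrho$-shape. The elementary inequality $|\xi-\xi_*|^2\leq 2(|\xi|^2+|\xi_*|^2)$ together with the choice $\varrho<\min(\theta/2,1/8)$ renders the resulting exponent nonpositive, giving a uniform $\mathbf{k}^\varrho$-bound; since $\mathbf{k}^\varrho$ has the same integrability shape as the $\mathbf{k}$ appearing in the statement, the absorption goes through, and the same mechanism handles the two gain contributions in \eqref{Gamma_est}.
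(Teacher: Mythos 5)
Your argument follows essentially the same route as the paper for \eqref{Gamma_est}: apply the product rule in $x$, dominate the undifferentiated argument by $\Vert w\cdot\Vert_{\lfty}e^{-\theta|\cdot|^2}$, and then recover a multiplier term plus a kernel integral from a Grad-type estimate. The paper phrases this more compactly by noting that $\Gamma(e^{-\theta|\cdot|^2},\cdot)$ has exactly the structure of $\mathcal{L}$ with $M$ replaced by $\sqrt{M}e^{-\theta|\cdot|^2}$, so \eqref{k1}--\eqref{k2} apply verbatim with adjusted constants, whereas you re-run the gain/loss Carleman split by hand; for \eqref{Kf}--\eqref{nablav nu} and \eqref{Gamma bounded} the paper simply cites \cite{R} and \cite{G}, so your sketches there are the omitted proofs.

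Two small slips. First, the displayed bound $L_h(\xi)\lesssim(1+|\xi|)\Vert wh\Vert_{\lfty}/w(\xi)$ has a spurious factor $1/w(\xi)$: $L_h$ does not decay in $\xi$, and the correct (and sufficient) bound is $L_h(\xi)\lesssim(1+|\xi|)\Vert wh\Vert_{\lfty}$. Second, symmetric bookkeeping of the loss pieces $\nabla_xf\cdot L_g$ and $\nabla_xg\cdot L_f$ produces two pointwise terms, $[1+|\xi|]\Vert wf\Vert_{\lfty}|\nabla_x g(x,\xi)|$ and $[1+|\xi|]\Vert wg\Vert_{\lfty}|\nabla_x f(x,\xi)|$; the second is missing from the displayed estimate, which is harmless since \eqref{Gamma_est} is only applied with $f=g$, but it means your attribution of $\nabla_x f\cdot L_g$ to one of the kernel integrals is not accurate as written.
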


\begin{proof}
The proof of \eqref{k1} -- \eqref{nablav nu} can be found in \cite{R}. The proof of \eqref{Gamma bounded} can be found in \cite{G}. 

We only prove \eqref{Gamma_est}. We take $x$-derivative to have
\begin{align*}
   |\nabla_x \Gamma(f,g)| & = |\Gamma(\nabla_x f,g) + \Gamma(f,\nabla_x g) |  \notag \\
   &\leq \Vert wf\Vert_{L^\infty_{x,\xi}} \Big|\Gamma(e^{-\theta |\cdot|^2},\nabla_x g) \Big| + \Vert wg\Vert_{L^\infty_{x,\xi}}\Big|\Gamma(\nabla_x f, e^{-\theta |\cdot |^2})  \Big|.
\end{align*}
The $\Gamma$ terms above have the same structure as the linear operator $\mathcal{L}$ in \eqref{linear_operator} with replacing $\mu$ by a different exponent $\sqrt{\mu}e^{-\theta |\xi|^2}$. Then \eqref{Gamma_est} follows the expression of $Kf$ in \eqref{k1} and \eqref{k2} with replacing $1/8$ by another coefficient. For ease of notation, we keep the same $\mathbf{k}(\xi,\xi')$ in \eqref{Gamma_est}.

\end{proof}

\begin{lemma}

In the linear operator~\eqref{linear_operator}, the collision frequency $\nu(\xi)$ satisfies
\begin{equation}\label{frequency}
\nu_0 [1+|\xi|] \leq  \nu(\xi) \leq \nu_1 [1+|\xi|].
\end{equation}

The kernel of $\mathcal{L}$ is
\begin{equation*}
\text{Ker}L = \text{Span}(X_+,X_-,X_0,\xi_2\sqrt{M} ,\xi_3 \sqrt{M}),
\end{equation*}
where
\begin{equation}\label{basis}
X_{\pm} = \frac{1}{\sqrt{30}}(|\xi|^2+\sqrt{15}\xi_1)\sqrt{M} ,  \ \ X_0 = \frac{1}{\sqrt{10}}(|\xi|^2-5) \sqrt{M}.
\end{equation}
Denote
\[\langle \phi \rangle := \int_{\mathbb{R}^3}\phi(\xi)\dd \xi.\]
The family $(X_+,X_0,X_-,\xi_2,\xi_3)$ is orthonormal in $L^2(\mathbb{R}^3;\dd \xi)$, and is orthogonal for the bilinear form $(f,g)\to \langle \xi_1 fg\rangle$, with
\[\langle \xi_1 X_{\pm}^2\rangle = \pm \sqrt{\frac{5}{3}}, \ \ \langle \xi_1 X_0^2\rangle = \langle \xi_1 X_2^2\rangle = \langle \xi_1 \xi_3^2\rangle = 0.\]
\hide
Then $\mathcal{H}\cap \text{Ker}L = \text{Span}(X_+,X_0,X_-)$.
\unhide
\end{lemma}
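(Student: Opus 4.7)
The lemma collects four independent classical facts, so I would handle each in turn.

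\emph{Bounds on $\nu(\xi)$.} From the definition $\nu(\xi)=\int_{\R^3\times\S^2}|(\xi-\xi_*)\cdot\omega|M(\xi_*)\dd\omega\dd\xi_*$ (hard-sphere cut-off), the $\omega$-integral over $\S^2$ collapses to a constant multiple of $|\xi-\xi_*|$, so $\nu(\xi)=c\int_{\R^3}|\xi-\xi_*|M(\xi_*)\dd\xi_*$. The upper bound follows at once from $|\xi-\xi_*|\leq|\xi|+|\xi_*|$ and the finite first $M$-moment. For the lower bound I would split into $|\xi_*|\leq|\xi|/2$ (on which $|\xi-\xi_*|\geq|\xi|/2$, giving a contribution $\gtrsim|\xi|$ as $|\xi|\to\infty$) and a fixed annulus $|\xi_*|\in[1,2]$ (yielding a strictly positive constant uniform in small $\xi$). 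Adding the two cases produces $\nu(\xi)\gtrsim 1+|\xi|$. This argument is entirely standard and I expect no difficulty.

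\emph{The kernel of $\mathcal{L}$.} I would invoke the classical $H$-theorem characterization: $\mathcal{L}\phi=0$ if and only if $\phi/\sqrt{M}$ is a collision invariant, hence a linear combination of $1,\xi_1,\xi_2,\xi_3,|\xi|^2$. Thus $\operatorname{Ker}\mathcal{L}=\operatorname{Span}(\sqrt{M},\xi_1\sqrt{M},\xi_2\sqrt{M},\xi_3\sqrt{M},|\xi|^2\sqrt{M})$ as a five-dimensional subspace. Since $X_+,X_-,X_0$ are explicit linear combinations of $\sqrt{M},\xi_1\sqrt{M},|\xi|^2\sqrt{M}$ (indeed $X_++X_-\propto|\xi|^2\sqrt{M}$, $X_+-X_-\propto\xi_1\sqrt{M}$, and adjusting by $X_0$ recovers $\sqrt{M}$), they span the same three-dimensional subspace, and appending $\xi_2\sqrt{M},\xi_3\sqrt{M}$ yields the full kernel.

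\emph{Orthonormality and the $\xi_1$-bilinear form.} These reduce to explicit Gaussian moments. Using $\int_{\R^3}\xi_i^{2k}M\,\dd\xi$ and the vanishing of odd moments, I would verify $\langle X_\pm^2\rangle=1$, $\langle X_0^2\rangle=1$, $\langle X_+X_-\rangle=0$, $\langle X_\pm X_0\rangle=0$, and orthogonality with $\xi_2\sqrt{M},\xi_3\sqrt{M}$; the normalization constants $\tfrac{1}{\sqrt{30}}$, $\tfrac{1}{\sqrt{10}}$ and the coefficient $\sqrt{15}$ are exactly tuned for this. For the bilinear form, I would write
\[
\langle \xi_1 X_\pm^2\rangle=\tfrac{1}{30}\langle \xi_1(|\xi|^2\pm\sqrt{15}\,\xi_1)^2 M\rangle,
\]
expand, and again use Gaussian moments, with odd-in-$\xi_1$ integrals vanishing; the surviving cross term contributes $\pm\tfrac{2\sqrt{15}}{30}\langle\xi_1^2|\xi|^2 M\rangle$, which evaluates to $\pm\sqrt{5/3}$. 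The computations $\langle\xi_1 X_0^2\rangle=0$, $\langle\xi_1(\xi_i\sqrt{M})^2\rangle=0$ for $i=2,3$, and all mixed bilinear pairings then follow by the same parity and moment calculations.

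\emph{Main obstacle.} None of the steps is deep; the only mildly delicate point is choosing the right split of the $\xi_*$-domain to extract the lower bound $\nu(\xi)\gtrsim 1+|\xi|$ uniformly in $|\xi|$, and keeping the Gaussian bookkeeping straight so that the normalization of $X_\pm$ really diagonalizes $\langle\xi_1\cdot,\cdot\rangle$ with the claimed eigenvalues $\pm\sqrt{5/3}$, which are the characteristic speeds of the linearized compressible Euler system.
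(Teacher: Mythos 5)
The paper's ``proof'' of this lemma consists of the single sentence ``The proof is standard and we omit it,'' so there is no authorial argument to compare against; what can be checked is whether your proposed plan correctly supplies the standard argument, and it does. Your treatment of $\nu$ (reduce the $\omega$-integral to $c\int|\xi-\xi_*|M\,\dd\xi_*$, upper bound by the triangle inequality, lower bound by splitting the $\xi_*$-domain) is the usual route and closes without difficulty once you note that the two regimes together give $\nu\gtrsim|\xi|$ for $|\xi|\gtrsim 1$ and $\nu\gtrsim 1$ for $|\xi|\lesssim 1$, which combine into $\nu\gtrsim 1+|\xi|$. The kernel identification via collision invariants is the $H$-theorem characterization, and $(X_+,X_-,X_0)$ indeed spans $\mathrm{Span}(\sqrt{M},\xi_1\sqrt{M},|\xi|^2\sqrt{M})$ since $X_+ \pm X_-$ recover $|\xi|^2\sqrt M$ and $\xi_1\sqrt M$ up to scalars, and $X_0$ then lets you peel off $\sqrt M$. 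Your Gaussian-moment bookkeeping is consistent with the stated normalizations: $\int|\xi|^4 M=15$, $\int\xi_1^2 M=1$, $\int\xi_1^2|\xi|^2 M=5$, which give $\langle X_\pm^2\rangle=\langle X_0^2\rangle=1$, all off-diagonal $L^2$ pairings zero, and for the bilinear form the only surviving piece of $\langle\xi_1 X_\pm^2\rangle$ is $\pm\tfrac{2\sqrt{15}}{30}\cdot 5=\pm\sqrt{5/3}$, with the remaining entries vanishing by parity in $\xi_1$. No gap; this is precisely the computation the paper is implicitly invoking.
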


\begin{proof}
The proof is standard and we omit it.
\end{proof}

\begin{lemma}\label{lemma:k_theta}
Let $0\leq \theta < \frac{1}{4}$, denote $\mathbf{k}_\theta(\xi,\xi') := \mathbf{k}(\xi,\xi') \frac{e^{\theta |\xi|^2}}{e^{\theta |\xi'|^2}}.$ Then we have
\begin{equation}\label{k_theta}
\int_{\mathbb{R}^3}  \mathbf{k}(\xi,\xi') \frac{e^{\theta |\xi|^2}}{e^{\theta |\xi'|^2}}  \dd \xi'  \lesssim \frac{C}{1+|\xi|}.
\end{equation}
And there exists $C_\theta > 0$
\begin{equation}\label{k_theta_bdd}
\mathbf{k}_\theta (\xi,\xi')\lesssim  \frac{e^{-C_\theta |\xi-\xi'|^2}}{|\xi-\xi'|} .
\end{equation}
\hide
Moreover, for $N\gg 1$, we have
\begin{equation}\label{k_N_upper_bdd}
\mathbf{k}_\theta(\xi,\xi') \mathbf{1}_{|\xi-\xi'|> \frac{1}{N}} \leq C_N,
\end{equation}
and
\begin{equation}\label{K_N_small}
\int_{|\xi'|>N \text{ or } |\xi-\xi'|\leq \frac{1}{N}} \mathbf{k}_\theta(\xi,\xi') \dd \xi' \lesssim \frac{1}{N} \leq o(1).
\end{equation}
\unhide

The derivative on $\mathbf{k}(\xi,\xi')$ shares similar property: for $0<\tilde{\theta}\ll \theta,$ 
\begin{equation}\label{nabla_k_theta}
\vert \nabla_\xi \mathbf{k}(\xi,\xi')\vert \frac{e^{\tilde{\theta}\vert \xi\vert^2}}{e^{\tilde{\theta}\vert \xi'\vert^2}} \lesssim \frac{[1+|\xi|^2]e^{-C_\theta |\xi-\xi'|^2}}{\vert \xi-\xi'\vert^2}    .
\end{equation}

\end{lemma}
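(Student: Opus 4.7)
The plan has three parts, one per displayed inequality. The whole proof is driven by algebraic manipulation of the Gaussian exponents and completing-the-square arguments.

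\textbf{Plan for \eqref{k_theta_bdd}.} I will split $\mathbf{k}=\mathbf{k}_1+\mathbf{k}_2$. For $\mathbf{k}_1$, multiplying by $e^{\theta(|\xi|^2-|\xi'|^2)}$ gives $C_{\mathbf{k}_1}|\xi-\xi'|e^{-(\frac14-\theta)|\xi|^2-(\frac14+\theta)|\xi'|^2}$, which since $\theta<\frac14$ is dominated by $e^{-C|\xi-\xi'|^2}$ up to a polynomial factor that the Gaussian absorbs. For $\mathbf{k}_2$, writing $a=|\xi-\xi'|$ and $c=|\xi|^2-|\xi'|^2$, the exponent becomes
\[
 -\tfrac18 a^2-\tfrac18\tfrac{c^2}{a^2}+\theta c
  = -\tfrac18\tfrac{(c-4\theta a^2)^2}{a^2}-\bigl(\tfrac18-2\theta^2\bigr)a^2.
\]
Since $\theta<\frac14$ the coefficient $\frac18-2\theta^2>0$, so the exponent is $\le -C_\theta|\xi-\xi'|^2$, giving the bound.

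\textbf{Plan for \eqref{k_theta}.} The pointwise bound \eqref{k_theta_bdd} only gives $\int\mathbf{k}_\theta\,d\xi'\lesssim 1$, so I need the refined decay. I will change variables to $y=\xi-\xi'$ and use $|\xi|^2-|\xi'|^2=2\xi\cdot y-|y|^2$. Substituting and completing the square jointly in $y$ and the projection $\xi\cdot y/|y|$ yields an expression of the form
\[
 \mathbf{k}_{2,\theta}(\xi,\xi-y)=\tfrac{C}{|y|}\exp\!\Bigl(-(\tfrac18-2\theta^2)|y|^2-\tfrac12\bigl(\tfrac{\xi\cdot y}{|y|}-(\tfrac12+2\theta)|y|\bigr)^2\Bigr).
\]
Passing to polar coordinates $y=r\omega$ and integrating the azimuthal variable, the angular integral becomes a one-dimensional Gaussian in $\tau=\xi\cdot\omega/|\xi|$ of width $\sim 1/|\xi|$, yielding $\min(1,1/|\xi|)\lesssim 1/(1+|\xi|)$. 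The remaining $r$-integral $\int_0^\infty r\,e^{-c r^2}dr$ is finite. The $\mathbf{k}_1$ contribution is handled similarly and in fact decays exponentially in $|\xi|$.

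\textbf{Plan for \eqref{nabla_k_theta}.} Write $\mathbf{k}_2=Ce^{-\phi}/|\xi-\xi'|$ with $\phi=\tfrac18 a^2+\tfrac18 c^2/a^2$. Differentiating, $|\nabla_\xi\mathbf{k}_2|\lesssim e^{-\phi}(|\xi-\xi'|^{-2}+|\xi-\xi'|^{-1}|\nabla_\xi\phi|)$. Using the identity $|\xi|^2-|\xi'|^2=(\xi-\xi')\cdot(\xi+\xi')$, which implies $|c|\le a(|\xi|+|\xi'|)$, I will obtain
\[
 |\nabla_\xi\phi|\lesssim |\xi-\xi'|+\frac{(|\xi|+|\xi'|)^2}{|\xi-\xi'|},\qquad
 |\nabla_\xi\mathbf{k}_2|\lesssim \frac{(1+|\xi|^2+|\xi'|^2)\,e^{-\phi}}{|\xi-\xi'|^2}.
\]
Then I multiply by $e^{\tilde\theta(|\xi|^2-|\xi'|^2)}$ and repeat the completing-the-square computation from \eqref{k_theta_bdd} with $\tilde\theta$ in place of $\theta$; this still gives exponential decay $e^{-C_\theta|\xi-\xi'|^2}$ (with room to spare since $\tilde\theta\ll\theta<\frac14$). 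The leftover $|\xi'|^2$ is converted to $|\xi|^2$ via $|\xi'|^2\le 2|\xi|^2+2|\xi-\xi'|^2$, absorbing the $|\xi-\xi'|^2$ into the exponential at a small cost in the constant. For $\mathbf{k}_1$, a direct differentiation gives $|\nabla_\xi\mathbf{k}_1|\lesssim(1+|\xi||\xi-\xi'|)e^{-(|\xi|^2+|\xi'|^2)/4}$, which the Gaussian weight dominates by $(1+|\xi|^2)e^{-C|\xi-\xi'|^2}/|\xi-\xi'|^2$ easily.

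\textbf{Main obstacle.} The only subtle step is the refined $1/(1+|\xi|)$ bound \eqref{k_theta}: the pointwise estimate \eqref{k_theta_bdd} is not sharp enough, and one must retain the full square $(\xi\cdot y/|y|-(\tfrac12+2\theta)|y|)^2$ in the exponent to extract an extra $1/|\xi|$ factor from the angular integration over $\mathbb S^2$. Everything else is routine manipulation of the Grad kernel with the $e^{\theta|\xi|^2}$ weight absorbed into the quadratic exponent.
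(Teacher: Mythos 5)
Your proposal is correct and, for the part the paper actually proves in detail (\eqref{nabla_k_theta}), follows essentially the same steps: differentiate the Grad kernel, bound $\big|\,|\xi|^2-|\xi'|^2\,\big|$ by $|\xi-\xi'|\,(|\xi|+|\xi'|)$, absorb the leftover $|\xi'|^2$ factor into the Gaussian, and apply \eqref{k_theta_bdd}. For \eqref{k_theta} and \eqref{k_theta_bdd} the paper simply cites \cite{G}, and your completing-the-square computation and the angular integral extracting the $1/(1+|\xi|)$ decay are the standard arguments appearing there.
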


\begin{proof}
The proof of \eqref{k_theta} and \eqref{k_theta_bdd} can be found in \cite{G}. We provide the proof of \eqref{nabla_k_theta}.

Taking the derivative to \eqref{k1} and \eqref{k2} we have
\begin{align*}
\nabla_v \mathbf{k}_1(\xi,\xi')    & = \frac{\xi-\xi'}{\vert \xi-\xi'\vert}\mathbf{k}_1(\xi,\xi') - \xi \mathbf{k}_1(\xi,\xi'),
\end{align*}
\begin{align*}
\nabla_v \mathbf{k}_2(\xi,\xi')    &  = \frac{\xi-\xi'}{\vert \xi-\xi'\vert^2} \mathbf{k}_2(\xi,\xi') \\
&- \mathbf{k}_2(\xi,\xi')\Big[\frac{\xi-\xi'}{4} + \frac{\xi(\vert \xi'\vert^2-\vert \xi\vert^2)\vert \xi-\xi'\vert^2 - (\vert \xi'\vert^2-\vert \xi\vert^2)^2  (\xi-\xi') }{4\vert \xi-\xi'\vert^4} \Big] .
\end{align*}
Since $|\xi'|^2 - |\xi|^2    = |\xi'-\xi|^2 + 2\xi\cdot (\xi'-\xi) $, we have
\begin{align*}
   & \big| |\xi'|^2 - |\xi|^2\big| \lesssim |\xi'-\xi|^2 + |\xi||\xi'-\xi| , \\
   & \big| |\xi'|^2 - |\xi|^2\big|^2 \lesssim |\xi'-\xi|^4 + |\xi|^2 |\xi'-\xi|^2.
\end{align*}
This leads to
\begin{align*}
\Big\vert\nabla_\xi \mathbf{k}(\xi,\xi') \frac{e^{\tilde{\theta}\vert \xi\vert^2}}{e^{\tilde{\theta}\vert \xi'\vert^2}} \Big\vert    &\lesssim  \big[\frac{1+|\xi'|^2+|\xi|^2}{\vert \xi-\xi'\vert}+\vert \xi-\xi'\vert+[1+|\xi|]\big] [\mathbf{k}_1(\xi,\xi')+\mathbf{k}_2(\xi,\xi') ]   \frac{e^{\tilde{\theta}\vert \xi\vert^2}}{e^{\tilde{\theta}\vert \xi'\vert^2}}  \\
    &\lesssim \big[\frac{1 +|\xi|^2}{\vert \xi-\xi'\vert}+\vert \xi-\xi'\vert+[1+|\xi|]\big] \mathbf{k}^{C_\theta}(\xi,\xi') \\
    & =\big[\frac{1 +|\xi|^2}{\vert \xi-\xi'\vert}+\vert \xi-\xi'\vert+[1+|\xi|]\big] \frac{e^{-C_\theta|\xi-\xi'|^2}}{|\xi-\xi'|} \lesssim \frac{1+|\xi|^2}{|\xi-\xi'|} \frac{e^{-c|\xi-\xi'|^2}}{|\xi-\xi'|}
\end{align*}
for some $c<C_\theta$. In the second line we have applied \eqref{k_theta}. In the last line we used that for $c<C_\theta$, we have
\begin{align*}
    &  e^{-C_\theta|\xi-\xi'|^2} \lesssim \frac{e^{-c|\xi-\xi'|^2}}{|\xi-\xi'|}, \ e^{-C_\theta|\xi-\xi'|^2} \lesssim \frac{e^{-c|\xi-\xi'|^2}}{|\xi-\xi'|^2}.
\end{align*}
For ease of notation, we conclude \eqref{nabla_k_theta} with coefficient $C_\theta$.

\end{proof}

\ \\

\subsection{Properties of kinetic weight $\alpha$ in \eqref{alpha}.}\label{sec:prelim_weight}

A key property of the kinetic weight~\eqref{alpha} is that $\tilde{\alpha}$ grows exponentially fast with a factor $c$ along the characteristic. We take 
\begin{equation}\label{c_8}
c:=\frac{1}{8},    
\end{equation} 
so that the exponential growth can be controlled by a faster exponential decay from the collision frequency $\nu(\xi)$ in~\eqref{frequency}.
\begin{lemma}[Velocity Lemma]\label{lemma:velocity}
Along the characteristic, whenever $x\geq 0$ and $ x-(\xi_1+u)s\geq 0$, for $\tilde{\alpha}(x,\xi)$ defined in \eqref{alpha}, we have
\begin{equation}\label{velocity_lemma}
   e^{-c\nu_0s/2}\tilde{\alpha}(x-s(\xi_1+u),\xi) \leq \tilde{\alpha}(x,\xi) \leq e^{c\nu_0s/2} \tilde{\alpha}(x-s(\xi_1+u),\xi).
\end{equation}
\end{lemma}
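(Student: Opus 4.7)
The plan is to reduce the claim to a differential inequality for $\tilde\alpha^2$ along the characteristic and then integrate via a Gronwall estimate. Set
\[
y(s) := x - s(\xi_1+u), \qquad A(s) := \tilde\alpha(y(s),\xi)^2 = (\xi_1+u)^2 + (c\nu_0)^2\, y(s)^2,
\]
which is well defined and nonnegative on the interval where $y(s) \ge 0$ by assumption. First I would simply differentiate: since $y'(s) = -(\xi_1+u)$,
\[
A'(s) \;=\; -2(c\nu_0)^2 (\xi_1+u)\, y(s),
\]
so the task is to bound $|A'(s)|$ by a multiple of $A(s)$.

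The key pointwise estimate is the elementary AM--GM inequality
\[
2(c\nu_0)^2\,|\xi_1+u|\,|y(s)| \;=\; 2\bigl(|\xi_1+u|\bigr)\bigl((c\nu_0)|y(s)|\bigr)(c\nu_0) \;\le\; c\nu_0\Bigl[(\xi_1+u)^2 + (c\nu_0)^2 y(s)^2\Bigr] \;=\; c\nu_0\, A(s).
\]
This yields $|A'(s)| \le c\nu_0\, A(s)$, i.e.
\[
-c\nu_0\, A(s) \;\le\; A'(s) \;\le\; c\nu_0\, A(s).
\]
Then Gronwall (applied separately to $e^{-c\nu_0 s}A(s)$ and $e^{c\nu_0 s}A(s)$) gives
\[
e^{-c\nu_0 s}\, A(0) \;\le\; A(s) \;\le\; e^{c\nu_0 s}\, A(0).
\]
Taking square roots and recalling $A(0)^{1/2} = \tilde\alpha(x,\xi)$, $A(s)^{1/2} = \tilde\alpha(x-s(\xi_1+u),\xi)$, I would invert to conclude
\[
e^{-c\nu_0 s/2}\,\tilde\alpha(x-s(\xi_1+u),\xi) \;\le\; \tilde\alpha(x,\xi) \;\le\; e^{c\nu_0 s/2}\,\tilde\alpha(x-s(\xi_1+u),\xi),
\]
which is \eqref{velocity_lemma}.

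There is no real obstacle: the lemma is purely algebraic once one sees the AM--GM step, and the constraints $x\ge 0$ and $x - s(\xi_1+u)\ge 0$ are needed only to ensure the characteristic stays in the physical domain (so that $A(s)$ is the correct evaluation of $\tilde\alpha^2$ and not some analytic continuation; the inequality itself does not use the sign of $y(s)$). The choice $c = 1/8$ in \eqref{c_8} plays no role in this lemma but will be used later to ensure $e^{c\nu_0 s/2}$ is dominated by $e^{-\nu(\xi) s}$, as noted in Remark \ref{rmk:weight}.
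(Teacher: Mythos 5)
Your proof is correct and follows essentially the same route as the paper: the core step is an AM--GM bound that controls the derivative of the weight along the characteristic by $c\nu_0$ times the weight itself, followed by Gronwall. The only cosmetic difference is that you differentiate $\tilde\alpha^2$ and take a square root at the end, whereas the paper differentiates $\tilde\alpha$ directly (picking up a factor $1/\tilde\alpha$ in the derivative that cancels against the AM--GM numerator), and both arrive at the same differential inequality with constant $c\nu_0/2$.
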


\begin{proof}
We take spatial derivative to $\tilde{\alpha}(x,\xi)$ and have
\begin{align}
 & |(\xi_1+u) \p_x \tilde{\alpha}(x,\xi)| \notag\\
 & =\Big|(\xi_1+u)  \frac{(c\nu_0)^2 x}{\tilde{\alpha}(x,\xi)}    \Big| =  \frac{|(c\nu_0)^{1/2}(\xi_1+u) (c\nu_0)^{3/2}x|}{\tilde{\alpha}(x,\xi)}\notag\\
  &\leq \frac{(c\nu_0)(\xi_1+u)^2+ (c\nu_0)^{3} x^2}{2\tilde{\alpha}(x,\xi)} = \frac{c\nu_0 (\tilde{\alpha}(x,\xi))^2}{2\tilde{\alpha}(x,\xi)} =\frac{c\nu_0\tilde{\alpha}(x,\xi)}{2}.\label{derivative_alpha}
\end{align}
Since
\begin{align*}
  \frac{\dd }{\dd s}\tilde{\alpha}(x-s(\xi_1+u),\xi)  & = -(\xi_1+u)\p_x \tilde{\alpha}(x-s(\xi_1+u),\xi) ,
\end{align*}
by \eqref{derivative_alpha}, we have 
\begin{align*}
  \frac{\dd }{\dd s}\tilde{\alpha}(x-s(\xi_1+u),\xi)  &   \leq \frac{c\nu_0\tilde{\alpha}(x-s(\xi_1+u),\xi)}{2}.
\end{align*}
By Gronwall's inequality, we conclude~\eqref{velocity_lemma}.

\end{proof}

With the extra cut-off function, the weight \eqref{kinetic_weight} shares similar property as demonstrated in the following lemma:
\begin{lemma}\label{lemma:velocity_alpha}
Along the characteristic, when $x\geq 0$ and $x-(\xi_1+u)s\geq 0$, for $\alpha(x,\xi)$ defined in \eqref{kinetic_weight}, we have
\begin{equation}\label{almost_invariant}
  e^{-2c\nu_0 s}\alpha(x-s(\xi_1+u),\xi) \leq \alpha(x,\xi) \leq e^{2c\nu_0 s} \alpha(x-s(\xi_1+u),\xi).
\end{equation}

\end{lemma}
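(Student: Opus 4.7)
The plan is to reduce \eqref{almost_invariant} to Lemma \ref{lemma:velocity} via the chain rule, taking advantage of the elementary fact that the composition $\chi\circ\tilde\alpha$ has a logarithmic derivative controlled by that of $\tilde\alpha$, up to a harmless universal constant. Concretely, I would set $A(s):=\alpha(x-s(\xi_1+u),\xi)=\chi(\tilde\alpha(x-s(\xi_1+u),\xi))$ and compute
\[
A'(s)=\chi'(\tilde\alpha(x-s(\xi_1+u),\xi))\,\frac{d}{ds}\tilde\alpha(x-s(\xi_1+u),\xi).
\]
The inner factor was already bounded in the proof of Lemma \ref{lemma:velocity}: the identity $\frac{d}{ds}\tilde\alpha(x-s(\xi_1+u),\xi)=-(\xi_1+u)\p_x\tilde\alpha$ combined with \eqref{derivative_alpha} gives
$\bigl|\tfrac{d}{ds}\tilde\alpha\bigr|\le \tfrac{c\nu_0}{2}\tilde\alpha$. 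Hence $|A'(s)|\le \tfrac{c\nu_0}{2}\chi'(\tilde\alpha)\tilde\alpha$.

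The crux is then the pointwise inequality
\[
\chi'(t)\,t \;\le\; 4\,\chi(t), \qquad t\ge 0,
\]
which I would verify by splitting on the three regimes dictated by \eqref{chi}: on $[0,1/2]$ both sides equal $t$; on $[2,\infty)$ the left side vanishes; on the transition interval $[1/2,2]$ one has $\chi'(t)\le 1$ and $t\le 2$, while $\chi$ is non-decreasing so $\chi(t)\ge\chi(1/2)=1/2$, giving $\chi'(t)t\le 2\le 4\chi(t)$. Substituting this bound yields the differential inequality $|A'(s)|\le 2c\nu_0\, A(s)$.

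From here a standard Gronwall argument finishes the job: $(e^{-2c\nu_0 s}A(s))'\le 0$ gives $A(s)\le e^{2c\nu_0 s}A(0)$, i.e.\ $\alpha(x-s(\xi_1+u),\xi)\le e^{2c\nu_0 s}\alpha(x,\xi)$, which is the left half of \eqref{almost_invariant} after dividing through by $e^{2c\nu_0 s}$; symmetrically, $(e^{2c\nu_0 s}A(s))'\ge 0$ yields the right half. The only mildly delicate point is the constant $4$ in the key pointwise inequality, which forces the growth constant in the exponent to jump from $c\nu_0/2$ (as in Lemma \ref{lemma:velocity}) to $2c\nu_0$; I would highlight this loss since it is exactly what motivates the choice $c=1/8$ in \eqref{c_8}, so that the accumulated growth $e^{2c\nu_0 s}=e^{\nu_0 s/4}$ is still comfortably dominated by the damping $e^{-\nu(\xi)s}\le e^{-\nu_0 s}$ coming from the collision frequency.
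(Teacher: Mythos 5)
Your argument is correct and follows the same route as the paper: prove the pointwise bound $\chi'(t)\,t\le 4\chi(t)$ by the same three-regime case split, combine it with the bound $|(\xi_1+u)\p_x\tilde\alpha|\le \tfrac{c\nu_0}{2}\tilde\alpha$ from Lemma \ref{lemma:velocity} (equation \eqref{derivative_alpha}), and close with Gronwall. One trivial imprecision: on $[0,1/2]$ you say ``both sides equal $t$'', but the right side is $4t$; the inequality $t\le 4t$ is of course still immediate.
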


\begin{proof}
First we prove that $\chi(s)$ in~\eqref{chi} has the following property:
\begin{equation}\label{chi_property}
 s\chi'(s)\leq 4\chi(s).   
\end{equation}
By~\eqref{chi}, when $s\geq 2$, $s\chi'(s)=0\leq 4\chi(s)$. When $s\leq \frac{1}{2}$, we have $\chi(s)=s,$ and thus $s\chi'(s)=s \leq 4\chi(s)=4s$. When $\frac{1}{2}<s<2$, since $\chi'(s)\leq 1$, we have $s\chi'(s)\leq s<2=4\chi(1/2)\leq 4\chi(s)$. Then we conclude~\eqref{chi_property}.

Then we compute
\begin{align*}
   &|(\xi_1+u)\p_x \alpha(x,\xi)| \\
   &  = |(\xi_1+u) \chi'(\tilde{\alpha}(x,\xi))\p_x \tilde{\alpha}(x,\xi)|  \\
   & =\chi'(\tilde{\alpha}(x,\xi)) |(\xi_1+u)\p_x \tilde{\alpha}(x,\xi)| \leq   \frac{c\nu_0}{2} \chi'(\tilde{\alpha}(x,\xi))\tilde{\alpha}(x,\xi) \\
   &  \leq  2c\nu_0 \chi(\tilde{\alpha}(x,\xi)) =   2c\nu_0 \alpha(x,\xi).
\end{align*}
In the third line, we applied the computation in~\eqref{derivative_alpha}. In the last line, we used~\eqref{chi_property}. Then by 
\[\frac{\dd }{\dd s} \alpha(x-s(\xi_1+u),\xi) = -(\xi_1+u) \p_x \alpha(x-s(\xi_1+u),\xi)\]
and Gronwall's inequality, we conclude~\eqref{almost_invariant}.

\end{proof}

The next lemma addresses the integral \eqref{nonlocal_local} mentioned in Section \ref{sec:proof_strategy}, which consists of the weight $1/\alpha$ and $K(f)$ in Lemma \ref{lemma:k_theta}.
\begin{lemma}\label{lemma:NLN}
Let $t\gg 1$, $T\leq t$ and $x-T(\xi_1+u)\geq 0$. For $T>1$, we have
\begin{align}
    & \int^t_{t-T} \dd s e^{-\nu(\xi)(t-s)/2} \int_{\mathbb{R}^3} \dd \xi' \frac{e^{-C|\xi-\xi'|^2}}{|\xi-\xi'|} \frac{1}{\alpha(x-(t-s)(\xi_1+u),\xi')}    \lesssim \frac{t}{\alpha(x,\xi)}. \label{nln_large}
\end{align}   
For $T\leq 1$, we have
\begin{align}
     & \int^t_{t-T} \dd s e^{-\nu(\xi)(t-s)/2} \int_{\mathbb{R}^3} \dd \xi' \frac{e^{-C|\xi-\xi'|^2}}{|\xi-\xi'|} \frac{1}{\alpha(x-(t-s)(\xi_1+u),\xi')}    \lesssim \frac{\sqrt{T}+T\ln(t)}{\alpha(x,\xi)}. \label{nln_small}
\end{align}

In result, for $\e\ll 1$ such that $\e\ln(t)\ll 1$ and $x-\e(\xi_1+u)\geq 0$,
\begin{align}
    &  \int^t_{t-\e} \dd s e^{-\nu(\xi)(t-s)/2} \int_{\mathbb{R}^3} \dd \xi' \frac{e^{-C|\xi-\xi'|^2}}{|\xi-\xi'|} \frac{1}{\alpha(x-(t-s)(\xi_1+u),\xi')}    \lesssim \frac{\sqrt{\e}+\e \ln(t)}{\alpha(x,\xi)}. \label{nln_e}
\end{align}

\end{lemma}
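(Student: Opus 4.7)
The plan is to reduce the double integral in \eqref{nln_small} to a one-dimensional integral in $s$ by first performing the $\dd\xi'$ integration, then to control the resulting logarithmic integrand by comparing its scale to $\alpha(x,\xi)$ along the characteristic. Since $\alpha(y,\xi')=\chi(\sqrt{|\xi'_1+u|^2+(c\nu_0 y)^2})$ depends on $\xi'$ only through $\xi'_1$, I would first integrate in $\xi'_\perp=(\xi'_2,\xi'_3)$: polar coordinates give the Gaussian bound
\[
\int_{\mathbb{R}^2}\frac{e^{-C(a^2+|\xi_\perp-\xi'_\perp|^2)}}{\sqrt{a^2+|\xi_\perp-\xi'_\perp|^2}}\,\dd\xi'_\perp\lesssim \frac{e^{-C'a^2}}{1+|a|},\qquad a=\xi_1-\xi'_1.
\]
Using the elementary bound $1/\chi(s)\lesssim 1+1/s$ and substituting $\eta=\xi'_1+u$, the remaining one-dimensional integral---after absorbing the shift from the Gaussian into the $e^{-\nu(\xi)(t-s)/2}$ decay---reduces to the classical $\int_{\mathbb{R}}\frac{e^{-C\eta^2}}{\sqrt{\eta^2+b^2}}\,\dd\eta\lesssim 1+\ln(1+1/b^2)$, so that, with $y=x-(t-s)(\xi_1+u)$,
\[
\int_{\mathbb{R}^3}\frac{e^{-C|\xi-\xi'|^2}}{|\xi-\xi'|}\frac{\dd\xi'}{\alpha(y,\xi')}\lesssim 1+\ln\!\Big(1+\tfrac{1}{(c\nu_0 y)^2}\Big).
\]

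Setting $\tau=t-s$, the bound \eqref{nln_small} reduces to controlling
\[
J:=\int_0^T e^{-\nu(\xi)\tau/2}\Big[1+\ln\!\Big(1+\tfrac{1}{(c\nu_0(x-\tau(\xi_1+u)))^2}\Big)\Big]\,\dd\tau\lesssim\frac{\sqrt{T}+T\ln t}{\alpha(x,\xi)}\quad(T\leq 1).
\]
I would split on the sign of $\xi_1+u$. When $\xi_1+u\leq 0$, $y(\tau)\geq x$ on $[0,T]$, so the logarithm is bounded by $1+|\ln\alpha(x,\xi)|$; using $\alpha\leq 1$, the elementary inequality $\alpha|\ln\alpha|\lesssim 1$, and splitting on whether $\alpha\gtrless 1/t$, this contribution is $\lesssim(T\ln t+\sqrt{T})/\alpha(x,\xi)$. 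When $\xi_1+u>0$, the hypothesis $x-T(\xi_1+u)\geq 0$ permits the substitution $w=x-\tau(\xi_1+u)$; combined with Cauchy--Schwarz in $\tau$ and the uniform bound $\int_0^\infty|\ln(1+1/w^2)|^2\,\dd w<\infty$, this yields $\int_0^T\ln(1+1/(c\nu_0 y)^2)\,\dd\tau\lesssim \sqrt{T/(\xi_1+u)}$. The Jacobian $1/\sqrt{\xi_1+u}$ is then absorbed into $1/\alpha(x,\xi)\sim 1/\sqrt{|\xi_1+u|^2+(c\nu_0 x)^2}$ through a case analysis on the dominant term of $\alpha^2$, supplemented, when $|\xi_1+u|$ is large, by the collision-frequency damping $e^{-\nu(\xi)\tau/2}$ with $\nu(\xi)\gtrsim 1+|\xi_1+u|$.

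For $T>1$, I would partition $[0,T]$ into $\lceil T\rceil\leq t$ unit subintervals $[k,k+1]$. On each piece, the shift $\tau\mapsto\tau-k$ moves the problem to the translated initial position $x_k:=x-k(\xi_1+u)\geq 0$; applying the $T\leq 1$ estimate produces a contribution bounded by $e^{-\nu(\xi)k/2}(1+\ln t)/\alpha(x_k,\xi)$. By Lemma \ref{lemma:velocity_alpha}, $1/\alpha(x_k,\xi)\leq e^{2c\nu_0 k}/\alpha(x,\xi)$, and with $c=1/8$ one has $2c\nu_0=\nu_0/4<\nu(\xi)/2$, so the geometric series $\sum_k e^{-(\nu(\xi)/2-2c\nu_0)k}$ converges. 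This yields $J\lesssim(1+\ln t)/\alpha(x,\xi)\leq t/\alpha(x,\xi)$, which is \eqref{nln_large}, and \eqref{nln_e} is \eqref{nln_small} specialised to $T=\e$. The main obstacle is the subcase $\xi_1+u>0$ of the $T\leq 1$ estimate: the substitution $w=x-\tau(\xi_1+u)$ produces a Jacobian $1/(\xi_1+u)$ that can be arbitrarily large, so the correct scale $1/\alpha(x,\xi)$ only emerges after the Cauchy--Schwarz step combined with a delicate case analysis on which of $|\xi_1+u|$ and $c\nu_0 x$ dominates inside $\alpha$, balanced against the exponential damping.
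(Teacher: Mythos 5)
Your overall plan---integrate out $\xi'$ to reduce to a logarithm of $y=x-\tau(\xi_1+u)$, then estimate the $\tau$-integral by comparing scales with a sign split on $\xi_1+u$, then iterate over unit time intervals via Lemma~\ref{lemma:velocity_alpha} to pass from $T\leq 1$ to general $T$---matches the paper's Step~1/Step~2 structure, and the unit-interval iteration to get \eqref{nln_large} from \eqref{nln_small} is a genuinely cleaner route than the paper's direct treatment. The $\dd\xi'$ bound $\lesssim 1+\ln(1+1/(c\nu_0 y)^2)$ is correct. But the $T\leq 1$ estimate has a genuine gap in the case $\xi_1+u\leq 0$.

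You assert that $\ln(1+1/(c\nu_0 y)^2)\leq 1+|\ln\alpha(x,\xi)|$. This is false: take $|\xi_1+u|>2$ (so $\tilde\alpha>2$, $\alpha(x,\xi)=1$, and the right side equals $1$) and $x\to 0^+$; the left side then diverges. The slippage is that $\alpha(x,\xi)$ is controlled by whichever of $|\xi_1+u|$ and $c\nu_0 x$ is \emph{larger}, whereas $\ln(1+1/(c\nu_0 x)^2)$ sees only the latter. Pulling this (invalid) pointwise bound outside also discards the growth $y=x+|\xi_1+u|\tau$ that would otherwise tame the $\tau$-integral. The paper's Step~2--1 for \eqref{ln_bdd_1} handles precisely this by splitting on whether $c\nu_0 x\gtrless|\eta_1|$: when $c\nu_0 x<|\xi_1+u|$ it uses $x<|\xi_1+u|/(c\nu_0)$ to convert $|\ln y|$ into $|\ln|\xi_1+u||+|\ln(\tau+1/(c\nu_0))|$, then controls the first piece by $1/|\xi_1+u|\lesssim 1/\alpha$ via \eqref{xi_bdd} and integrates the second directly in $\tau$.

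For $\xi_1+u>0$, the Cauchy--Schwarz bound $\int_0^T\ln(1+1/(c\nu_0 y)^2)\,\dd\tau\lesssim\sqrt{T/(\xi_1+u)}$ is valid but lossy in the regime $c\nu_0 x>|\xi_1+u|$, where $1/\sqrt{\xi_1+u}\not\lesssim 1/\alpha(x,\xi)$ (there $\alpha$ is governed by $c\nu_0 x$, not $\xi_1+u$); extending the $w$-interval to $(0,\infty)$ throws away the fact that $[x-T(\xi_1+u),x]$ has length $T(\xi_1+u)<c\nu_0 x$. And your appeal to the collision-frequency damping ``when $|\xi_1+u|$ is large'' misidentifies where it is needed: the paper uses it when $x>2$ and $\xi_1+u$ is small, so that reaching $y\leq 1$ forces $\tau\geq (x-1)/(\xi_1+u)$ to be large. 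These regimes are exactly what the paper's longer Step~2--2 case analysis addresses.
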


\begin{remark}
We note that there is a time-growing factor $t$ in the upper bound. In the steady problem, we will fix $t$ to be large and fixed, and this factor will be damped by either $e^{-\nu(\xi)t}$ or $\gamma$(e.g. see \eqref{pen_lin_op} and \eqref{K_123456_bdd}).
\end{remark}

\begin{proof}
We only consider the case $\alpha(x-(t-s)(\xi_1+u),\xi') = \tilde{\alpha}(x-(t-s)(\xi_1+u),\xi')$. For the other case, from~\eqref{chi}, we have $\alpha(x-(t-s)(\xi_1+u),\xi')\geq \frac{1}{2}$, this leads to
\begin{align*}
  &\int_{t-T}^t \dd s e^{-\nu(\xi)(t-s)/2} \int_{\mathbb{R}^3} \dd \xi'
 \mathbf{1}_{\alpha\neq \tilde{\alpha}}\frac{e^{-C|\xi-\xi'|^2}}{|\xi-\xi'|\alpha(x-(t-s)(\xi_1+u),\xi')}   \\
  & \lesssim \int_{t-T}^t \dd s e^{-\nu(\xi)(t-s)/2} \int_{\mathbb{R}^3} \dd \xi'  \frac{e^{-C|\xi-\xi'|}}{|\xi-\xi'|} \lesssim \min\{1,T\} \lesssim   \frac{\min\{1,T\}}{\alpha(x,\xi)}.
\end{align*}
The last inequality follows from~\eqref{alpha_bdd}.

Then we focus on
\begin{equation*}
\int_{t-T}^t \dd s e^{-\nu(\xi)(t-s)/2} \int_{\mathbb{R}^3} \dd \xi' \frac{e^{-C|\xi-\xi'|^2}}{|\xi-\xi'|} \frac{1}{\sqrt{(\xi_1'+u)^2 + (c\nu_0)^2(x-(t-s)(\xi_1+u))^2             }}.
\end{equation*}

\textbf{Step 1: integral over $\dd \xi'$.}

First we compute the $\dd \xi'$ integral. We use a notation 
\begin{equation}\label{eta}
\eta := \xi+(u,0,0).     
\end{equation}
We apply a change of variable $\xi' + (u,0,0) \to \xi'$. Denote $\xi_{\parallel} = (0,\xi_2,\xi_3)$, then the $\dd \xi'$ integral reads
\begin{align}
    &\int_{\mathbb{R}^3} \frac{e^{-C|\xi-\xi'|^2}}{|\xi-\xi'|} \frac{1}{\sqrt{(\xi_1'+u)^2 + (c\nu_0)^2(x-(\xi_1+u)(t-s))^2}} \dd \xi'   \notag\\
    & =  \int_{\mathbb{R}^3} \frac{e^{-C|\eta-\xi'|^2}}{|\eta-\xi'|} \frac{1}{\sqrt{|\xi_1'|^2 + (c\nu_0)^2(x-(\xi_1+u)(t-s))^2}} \dd \xi'  \notag\\
    & \lesssim \iint \frac{e^{-C|\eta_\parallel - \xi'_\parallel|^2}}{|\eta_\parallel - \xi'_\parallel |} \int_{\mathbb{R}} \frac{e^{-C|\eta_1-\xi'_1|^2}}{\sqrt{|\xi'_1|^2 + (c\nu_0)^2 (x-(\xi_1+u)(t-s))^2  }}      \dd \xi_1'  \notag \\
    & \lesssim \int_{\mathbb{R}} \frac{e^{-C|\eta_1-\xi'_1|^2}}{\sqrt{|\xi'_1|^2 + (c\nu_0)^2 (x-(\xi_1+u)(t-s))^2  }}      \dd \xi_1'  \notag\\
    & = \int_{\mathbb{R}}  \mathbf{1}_{|\xi_1'|\leq 2|\eta_1|} + \mathbf{1}_{|\xi_1'| > 2|\eta_1|}  \dd \xi_1'  \notag\\
    & \lesssim   \int_0^{2|\eta_1|} \frac{1}{\sqrt{|\xi'_1|^2 + (c\nu_0)^2 (x-(\xi_1+u)(t-s))^2  }}   \dd \xi_1' \notag \\
    & + \int_{\mathbb{R}} \frac{e^{-C|\xi_1'|^2/4}}{\sqrt{|\xi'_1|^2 + (c\nu_0)^2 (x-(\xi_1+u)(t-s))^2  }} \dd \xi_1'  \notag\\
    & \lesssim 1+\Big[\int_0^1 + \int_0^{2|\eta_1|}\Big]\frac{1}{\sqrt{|\xi'_1|^2 + (c\nu_0)^2 (x-(\xi_1+u)(t-s))^2  }} \dd \xi_1'. \label{xi_integral}
\end{align}
In the second last line we used that for $|\xi_1'|>2|\eta_1|$,
\begin{align*}
    &  |\eta_1-\xi_1'|\geq |\xi_1'|-|\eta_1|\geq |\xi_1'|-\frac{|\xi_1'|}{2}=\frac{|\xi_1'|}{2}.
\end{align*}
In the last line we used
\begin{align*}
    & \int_1^\infty  \frac{e^{-C|\xi_1'|^2/4}}{\sqrt{|\xi'_1|^2 + (c\nu_0)^2 (x-(\xi_1+u)(t-s))^2  }}\lesssim 1.
\end{align*}
The integral in~\eqref{xi_integral} reads
\begin{align}
    & \int_0^{2|\eta_1|} \frac{1}{\sqrt{|\xi'_1|^2 + (c\nu_0)^2 (x-(\xi_1+u)(t-s))^2  }} \dd \xi_1' \notag\\
    & = \ln\Big(\sqrt{|\xi_1'|^2 +  (c\nu_0)^2 (x-(\xi_1+u)(t-s))^2} + |\xi_1'|\Big)\Big|_{0}^{2|\eta_1|} \notag\\
    & = \ln \Big(\sqrt{4|\eta_1|^2 + (c\nu_0)^2 (x-(\xi_1+u)(t-s))^2}+2|\eta_1| \Big) \notag\\
    &- \ln\Big(\sqrt{(c\nu_0)^2 (x-(\xi_1+u)(t-s))^2} \Big) \notag\\
    & = \ln \Big(\sqrt{1+\Big|\frac{2\eta_1}{(c\nu_0)(x-(\xi_1+u)(t-s))}\Big|^2} + \Big| \frac{2\eta_1}{(c\nu_0)(x-(\xi_1+u)(t-s))}\Big| \Big). \label{ln_bdd_eta}
\end{align}
Similarly
\begin{align}
    &\int_0^{1} \frac{1}{\sqrt{|\xi'_1|^2 + (c\nu_0)^2 (x-(\xi_1+u)(t-s))^2  }} \dd \xi_1' \notag\\
    & = \ln \Big(\sqrt{1+\Big|\frac{1}{(c\nu_0)(x-(\xi_1+u)(t-s))}\Big|^2} + \Big| \frac{1}{(c\nu_0)(x-(\xi_1+u)(t-s))}\Big| \Big) \label{ln_bdd_1}.
\end{align}

\textbf{Step 2: integral over $\dd s$}.

Then we compute the $\dd s$ integral:
\begin{align*}
    &  \int_{t-T}^t \dd s e^{-\nu(\xi)(t-s)/2} \eqref{xi_integral} \lesssim \int_{t-T}^t \dd s e^{-\nu(\xi)(t-s)/2} [1+\eqref{ln_bdd_eta}+\eqref{ln_bdd_1}]. 
\end{align*}
The contribution of the constant is bounded as
\begin{align*}
    & \int_{t-T}^t \dd s e^{-\nu(\xi)(t-s)/2} \lesssim \min\{1,T\} \lesssim \frac{\min\{1,T\}}{\alpha(x,\xi)},
\end{align*}
where we have used~\eqref{alpha_bdd}.

Then we consider the contribution of~\eqref{ln_bdd_eta} and~\eqref{ln_bdd_1}. We split the discussion into $\xi_1+u<0$ and $\xi_1+u>0$.

\textbf{Step 2-1: case of $\xi_1+u<0$.} We have $x\leq x-(\xi_1+u)(t-s)$.

\textit{Contribution of~\eqref{ln_bdd_eta}.}

If $|\eta_1|/(c\nu_0 x) \leq 1$, then $|\eta_1|/[c\nu_0(x-(\xi_1+u)(t-s))]\leq 1$ for all $t-T\leq s\leq t$, this leads to \eqref{ln_bdd_eta} $\lesssim 1$ and thus
\begin{align*}
    & \int_{t-T}^t \dd s \mathbf{1}_{|\eta_1|/(c\nu_0x)\leq 1}e^{-\nu(\xi)(t-s)/2} \eqref{ln_bdd_eta} \lesssim \min\{1,T\} \leq \frac{\min\{1,T\}}{\alpha(x,\xi)}.
\end{align*}
Then we consider $|\eta_1|/(c\nu_0 x)>1$. From~\eqref{eta}, we have $\eta_1 = \xi_1 + u$. Then
\begin{align*}
  \alpha(x,\xi) &\leq \sqrt{(\xi_1+u)^2 + (c\nu_0)^2 x^2} \leq |\xi_1+u| + c\nu_0 x < 2|\xi_1+u| .
\end{align*}
Thus we have
\begin{align}
   \frac{1}{|\xi_1+u|} & \leq \frac{2}{\alpha(x,\xi)}. \label{xi_bdd}
\end{align}
We bound \eqref{ln_bdd_eta} by
\begin{align}
  \eqref{ln_bdd_eta}  &  \lesssim  1+ \ln\Big| \frac{\eta_1}{(c\nu_0)(x-(\xi_1+u)(t-s))}\Big|        \notag\\
  & \lesssim 1+ |\ln |\eta_1|| + |\ln |x-(\xi_1+u)(t-s)||. \label{ln_eta_upp_bdd}
\end{align}

The contribution of $|\ln|\eta_1||$ can be bounded as
\begin{align}
   \int_{t-T}^t e^{-\nu(\xi)(t-s)/2} |\ln |\eta_1|| \dd s  &\lesssim \min\{1,T\}\frac{|\ln |\eta_1||}{[1+|\xi|]} = \mathbf{1}_{|\xi_1+u|\geq 1} + \mathbf{1}_{|\xi_1+u|<1} \notag\\
    & \lesssim \min\{1,T\}\frac{|\ln (2|\xi|+2)|}{[1+|\xi|]} +  \frac{\min\{1,T\}}{|\xi_1+u|} \lesssim \frac{\min\{1,T\}}{\alpha(x,\xi)}. \label{ln_eta}
\end{align}
Here for $|\xi_1+u|<1$ we used $\ln |\xi_1+u| \leq \frac{1}{|\xi_1+u|}$. For the other case we used $|u|\ll 1$ and $|\xi_1+u|\geq 1$ to derive $|\xi_1+u|<2|\xi_1|$, and thus 
\begin{align*}
    &  \ln(|\xi_1+u|)\leq \ln(2|\xi_1|) \leq \ln(2|\xi|+2).
\end{align*}

Then we compute the contribution of the last term of~\eqref{ln_eta_upp_bdd}, which reads
\begin{align}
    & \int_{t-T}^t e^{-\nu(\xi)(t-s)/2} |\ln |x-(\xi_1+u)(t-s)|| \dd s \notag\\
    & = \int_{t-T}^t \mathbf{1}_{x-(\xi_1+u)(t-s)\geq 1} + \mathbf{1}_{x-(\xi_1+u)(t-s) < 1} \dd s \notag\\
    & \leq \int_{t-T}^t \mathbf{1}_{x-(\xi_1+u)(t-s)\geq 1}e^{-\nu(\xi)(t-s)/2} \ln ( -(\xi_1+u)/(c\nu_0) - (\xi_1+u)(t-s) ) \dd s \notag\\
    & + \int_{t-T}^t \mathbf{1}_{x-(\xi_1+u)(t-s)< 1} e^{-\nu(\xi)(t-s)/2} |\ln(-(\xi_1+u)(t-s))|\dd s \notag\\
    & \lesssim \int_{t-T}^t e^{-\nu(\xi)(t-s)/2} [|\ln |\xi_1+u||  + |\ln (t-s)| +|\ln (t-s+\frac{1}{c\nu_0} ) | ] \dd s \label{three_terms}\\
    &  \lesssim \min\{1,T\}\frac{|\ln |\xi_1+u||}{[1+|\xi|]} + \frac{\min\{1,|T|+|T\ln(T)|\}}{\alpha(x,\xi)} \lesssim \frac{\min\{1,|T|+|T\ln(T)|\}}{\alpha(x,\xi)}. \notag
\end{align}
In the third line, we used $x< \frac{|\eta_1|}{c\nu_0} = -\frac{(\xi_1+u)}{c\nu_0}$ for $\xi_1+u<0$. In the last line, we applied the same computation~\eqref{ln_eta} for the first term in~\eqref{three_terms}. For the rest terms in~\eqref{three_terms}, we used the following computation:
\begin{align}
  & \int_{t-T}^t e^{-\nu(\xi)(t-s)/2} [|\ln(t-s)| + |\ln(t-s+\frac{1}{c\nu_0})|]\dd s   = \int_{t-T}^t \mathbf{1}_{t-s\geq 1} + \mathbf{1}_{t-s\leq 1} \dd s \notag\\
  &  \lesssim  \mathbf{1}_{T\geq 1}\int_{t-T}^{t-1}  e^{-\nu(\xi)(t-s)/2} |t-s+\frac{1}{c\nu_0}| \dd s +\int_0^{\min\{1,T\}}  |\ln(s)| + |\ln(s+\frac{1}{c\nu_0})| \dd s  \notag\\
  & \lesssim \mathbf{1}_{T\geq 1} [1+ \frac{1}{c\nu_0}]+ \int_0^{\min\{1,T\}} |\ln(s)| \dd s   \notag \\
  &\lesssim \min\{1,T\} + \min\{1,|T|+|T\ln(T)|\} \notag\\
  &\lesssim  \min\{1,|T|+|T\ln(T)|\} \leq \frac{\min\{1,|T|+|T\ln(T)|\}}{\alpha(x,\xi)}. \label{integrate_s}
\end{align}
We conclude
\begin{align}
    &\int_{t-T}^t e^{-\nu(\xi)(t-s)/2} \eqref{ln_bdd_eta} \dd s\lesssim \frac{\min\{1,|T|+|T\ln(T)|\}}{\alpha(x,\xi)}. \label{step2_1_1_conclusion} 
\end{align}

\textit{Contribution of~\eqref{ln_bdd_1}.} 

If $1/(c\nu_0x)\leq 1$, we have \eqref{ln_bdd_1}$\lesssim 1$ and thus
\begin{align*}
    & \int_{t-T}^t \dd s e^{-\nu(\xi)(t-s)/2} 
  \mathbf{1}_{1/(c\nu_0x)\leq 1}\eqref{ln_bdd_1} \dd s \lesssim \frac{\min\{1,T\}}{\alpha(x,\xi)}.
\end{align*}
Then we suppose $c\nu_0 x \leq 1$. If $c\nu_0 x < |\eta_1|$, we bound \eqref{ln_bdd_1} similarly as \eqref{ln_eta_upp_bdd}:
\begin{align}
   \eqref{ln_bdd_1} & \lesssim  1 + |\ln(x-(\xi_1+u)(t-s))|. \label{ln_1_bdd}
\end{align}
We follow the same computation as in \eqref{three_terms} to conclude
\begin{align*}
    & \int_{t-T}^t \dd s e^{-\nu(\xi)(t-s)/2} 
\mathbf{1}_{1/(c\nu_0x)\leq 1,c\nu_0 x<|\eta_1|} \eqref{ln_bdd_1} \dd s\lesssim \frac{\min\{1,|T|+|T\ln(T)|\}}{\alpha(x,\xi)}.
\end{align*}
If $c\nu_0 x \geq |\eta_1|$, we have
\begin{align*}
   \alpha(x,\xi) & \leq \sqrt{|\xi_1+u|^2 + (c\nu_0)^2x^2} \leq 2c\nu_0 x,
\end{align*}
which leads to
\begin{align}
  \frac{1}{c\nu_0 x}  & \leq \frac{2}{\alpha(x,\xi)}. \label{x_bdd_alpha}
\end{align}

Then we use the bound \eqref{ln_1_bdd} and further compute
\begin{align*}
    &\int_{t-T}^t \dd s e^{-\nu(\xi)(t-s)/2} |\ln(x-(\xi_1+u)(t-s))|  = \int_{t-T}^t  [\mathbf{1}_{x-(\xi_1+u)(t-s)\leq 1} + \mathbf{1}_{x-(\xi_1+u)(t-s)\geq 1}]\\
    & \leq \int_{t-T}^t \mathbf{1}_{x-(\xi_1+u)(t-s)\leq 1} e^{-\nu(\xi)(t-s)/2}|\ln(x)| \dd s \\
    & +\int_{t-T}^t \mathbf{1}_{x-(\xi_1+u)(t-s)>1} e^{-\nu(\xi)(t-s)/2}\ln(x+c\nu_0 x(t-s)) \dd s\\
    &\lesssim |\ln(x)| \int_{t-T}^t e^{-\nu(\xi)(t-s)/2}[1+ \ln(1+c\nu_0(t-s))] \dd s\\
    &\lesssim |\ln(x)| \min\{1,|T|+|T\ln(T)|\}\lesssim \frac{\min\{1,|T|+|T\ln(T)|\}}{|x|} \\
    &\lesssim \frac{\min\{1,|T|+|T\ln(T)|\}}{\alpha(x,\xi)}.
\end{align*}
In the second last line, for the first inequality we applied the same computation~\eqref{integrate_s}, for the second inequality we used $x\leq \frac{1}{c\nu_0}$. In the last inequality, we used \eqref{x_bdd_alpha}.

We conclude
\begin{align}
    &\int_{t-T}^t e^{-\nu(\xi)(t-s)/2} \eqref{ln_bdd_1} \dd s \lesssim \frac{\min\{1,|T|+|T\ln(T)|\}}{\alpha(x,\xi)}. \label{step_2_1_2_conclusion}
\end{align}

\textbf{Step 2-2: case of $\xi_1+u>0$}. In such case $x-(\xi_1+u)(t-s)<x$. 

\textit{Contribution of~\eqref{ln_bdd_eta}.}

When $|\eta_1|/(c\nu_0 x)\geq 1$, we have $1/|\xi_1+u|\lesssim 1/\alpha(x,\xi)$. Then we use the same bound~\eqref{ln_eta_upp_bdd} for~\eqref{ln_bdd_eta}, where the contribution of the first two terms are independent of $x$, and thus can be bounded using the same computation in~\eqref{ln_eta}. We only need to compute the contribution of $|\ln(x-(\xi_1+u)(t-s))|$. We split the integral into $x-(\xi_1+u)(t-s)\geq 1$ and $x-(\xi_1+u)(t-s)\leq 1$. For the first case, we have
\begin{align}
    & \int_{t-T}^t \mathbf{1}_{x-(\xi_1+u)(t-s)\geq 1} e^{-\nu(\xi)(t-s)/2} |\ln(x-(\xi_1+u)(t-s)) | \dd s \notag\\
    & \leq \int_{t-T}^t  \mathbf{1}_{x-(\xi_1+u)(t-s)\geq 1} e^{-\nu(\xi)(t-s)/2} |\ln((\xi_1+u)/(c\nu_0)-(\xi_1+u)(t-s))|\dd s \notag\\
    &\lesssim \int_{t-T}^t e^{-\nu(\xi)(t-s)/2} |\ln(\xi_1+u)| \dd s + \int_{t-T}^t \mathbf{1}_{1/(c\nu_0)-(t-s)\geq 0} e^{-\nu(\xi)(t-s)/2} |\ln(1/(c\nu_0)-(t-s))|\dd s \notag\\
    & \lesssim \min\{1,T\}\frac{|\ln(\xi_1+u)|}{[1+|\xi|]} + \int_0^{\min\{1/(c\nu_0),T\}} |\ln(s)| \dd s \notag \\
    &\lesssim \min\{1,T\}\frac{|\ln(\xi_1+u)|}{[1+|\xi|]} + \min\{1,|T|+|T\ln(T)|\} \lesssim \frac{\min\{1,|T|+|T\ln(T)|\}}{\alpha(x,\xi)}. \label{ln_xi_1}
\end{align}
In the last inequality we used the same computation in~\eqref{ln_eta}.

For the second case $x-(\xi_1+u)(t-s)\leq 1$, without loss of generality, we assume $x-(\xi_1+u)T<1$, otherwise, the integration vanishes. We use a change of variable $y=x-(\xi_1+u)(t-s)$ with $\dd y = (\xi_1+u)\dd s$ and $x-(\xi_1+u)T\leq y\leq \min\{1,x\}$, then
\begin{align}
    &  \int_{t-T}^t \mathbf{1}_{x-(\xi_1+u
    )(t-s)\leq 1} e^{-\nu(\xi)(t-s)/2} |\ln(x-(\xi_1+u)(t-s))| \dd s \notag\\
    & \leq \int_{x-(\xi_1+u)T}^{\min\{1,x\}} e^{-\nu(\xi) \frac{x-y}{\xi_1+u}/2}|\ln(y)| \frac{1}{\xi_1+u} \dd y \leq \frac{1}{\xi_1+u}\int^{\min\{1,x\}}_{x-(\xi_1+u)T} |\ln(y)| \dd y \notag \\
    &\leq \frac{1}{\xi_1+u} \int_{0}^{\min\{1,(\xi_1+u)T\}}  |\ln(y)| \dd y   \label{third}\\
    &\lesssim  \frac{1}{\xi_1+u} \min\{1, |(\xi_1+u)T | + | (\xi_1+u)T \ln((\xi_1+u)T)|\}  \label{fourth}\\
    &\lesssim \frac{\min\{1,\sqrt{T}\}}{\alpha(x,\xi)}. \label{ln_xi_2}
\end{align}
In the third line, we use the fact that the integral domain is on $0<y\leq 1$, and the length of the integral domain is bounded by $\min\{1,x\} - [x-(\xi_1+u)T]\lesssim \min\{1,(\xi_1+u)T\}$. In the last line, in the case of $(\xi_1+u)T > 1$, we have $\frac{1}{\xi_1+u} < T$, and thus from \eqref{xi_bdd},
\begin{align*}
    & \frac{1}{\xi_1+u} \lesssim \min\{T,\frac{1}{\alpha(x,\xi)}\} \lesssim  \frac{\min\{1,T\}}{\alpha(x,\xi)}.
\end{align*}
In the case of $(\xi_1+u)T \leq 1$, for $T>1$, by \eqref{xi_bdd} we bound \eqref{third}$\lesssim \frac{1}{\xi_1+u} \lesssim \frac{\min\{1,T\}}{\alpha(x,\xi)}.$ For $T\leq 1$, from $(\xi_1+u)T\leq 1$ we have $|\ln((\xi_1+u)T)| \lesssim \frac{1}{\sqrt{(\xi_1+u)T}}$ and
\begin{align}
    &\frac{(\xi_1+u)T + |(\xi_1+u)T\ln((\xi_1+u)T)|}{\xi_1+u}  \notag\\
    & =T + |T\ln((\xi_1+u)T)| = \mathbf{1}_{(\xi_1+u)\geq 1} + \mathbf{1}_{(\xi_1+u)< 1} \notag\\
    & \lesssim T + \mathbf{1}_{\xi_1+u\geq 1} \frac{T}{\sqrt{(\xi_1+u)T}} + \mathbf{1}_{\xi_1+u<1} [T\ln(T)+T|\ln(\xi_1+u)|] \label{xi+u_T_leq_1}\\
    & \lesssim T + \sqrt{T} + |T\ln(T)| + \frac{T}{\xi_1+u} \lesssim \frac{\sqrt{T}}{\alpha(x,\xi)} \lesssim \frac{\min\{1,\sqrt{T}\}}{\alpha(x,\xi)}. \notag
\end{align}
In the last line, we used \eqref{xi_bdd} and $T\leq 1$.

Then we focus on the scenario that $|\eta_1|/(c\nu_0 x) \leq 1$, if $|\eta_1|\leq (c\nu_0)(x-(\xi_1+u)(t-s))$ for all $t-T\leq s\leq t$, then we have \eqref{ln_bdd_eta}$\lesssim 1$ and thus
\begin{align*}
  \int_{t-T}^t \mathbf{1}_{|\eta_1|\leq (c\nu_0)(x-(\xi_1+u)(t-s))} \eqref{ln_bdd_eta} \dd s  & \lesssim  \frac{\min\{1,T\}}{\alpha(x,\xi)}.
\end{align*}

If $|\eta_1|> (c\nu_0)(x-(\xi_1+u)(t-s))$ for some $t-T<s<t$, then there is a unique $t'$ such that $|\eta_1| = (c\nu_0)(x-(\xi_1+u)(t-t'))$. Denote $x' = x-(\xi_1+u)(t-t')$. Then 
\begin{align*}
    &   \frac{|\eta_1|}{(c\nu_0)(x'-(\xi_1+u)(t'-s))}>1 \text{ for all } t-T\leq s<t',
\end{align*}
\begin{align*}
    &  \frac{|\eta_1|}{(c\nu_0)(x-(\xi_1+u)(t-s))}\leq 1 \text{ for all } t'\leq s\leq t .
\end{align*}

By the observation above, we split the $s$-integral into two parts. The first part is bounded as
\begin{align*}
    &\int_{t'}^t e^{-\nu(\xi)(t-s)/2}\eqref{ln_bdd_eta}\dd s \lesssim \int_{t'}^t e^{-\nu(\xi)(t-s)/2} \dd s  \lesssim \frac{\min\{1,T\}}{\alpha(x,\xi)}.
\end{align*}
For the second part, we have $\frac{\eta_1}{(c\nu_0)x'}\geq 1$, and thus
\begin{align*}
    & \frac{1}{|\xi_1+u|}\lesssim \frac{1}{\alpha(x',\xi)}.
\end{align*}
Then the integral for the second part reads
\begin{align*}
    & \int_{t-T}^{t'}e^{-\nu(\xi)(t-s)/2}\eqref{ln_bdd_eta}\dd s  = e^{-\nu(\xi)(t-t')/2}\int_{t-T}^{t'} e^{-\nu(\xi)(t'-s)/2}\eqref{ln_bdd_eta} \dd s .
\end{align*}
Note that
\begin{align*}
   \eqref{ln_bdd_eta} & = \ln \Big(\sqrt{1+\Big|\frac{2\eta_1}{(c\nu_0)(x'-(\xi_1+u)(t'-s))}\Big|^2} + \Big| \frac{2\eta_1}{(c\nu_0)(x'-(\xi_1+u)(t'-s))}\Big| \Big).
\end{align*}
Following the same computation as~\eqref{ln_xi_1}, \eqref{ln_xi_2} for the case $|\eta_1|/(c\nu_0 x)>1$, we replace $x$ by $x'$ and conclude that
\begin{align*}
    &\int_{t-T}^{t'} e^{-\nu(\xi)(t-s)/2} \eqref{ln_bdd_eta}\dd s \lesssim e^{-\nu(\xi)(t-t')/2}\frac{\min\{1,\sqrt{T}\}}{\alpha(x',\xi)} \\
    & \lesssim e^{-\nu_0(t-t')/2} e^{2c\nu_0(t-t')}\frac{\min\{1,\sqrt{T}\}}{\alpha(x,\xi)}\leq \frac{\min\{1,\sqrt{T}\}}{\alpha(x,\xi)}.
\end{align*}
In the second line, we have used Lemma \ref{lemma:velocity_alpha} with $2c = \frac{1}{4}$.

Combining with~\eqref{ln_xi_1} and~\eqref{ln_xi_2}, we conclude that
\begin{align}
    & \int_{t-T}^{t} e^{-\nu(\xi)(t-s)/2} \eqref{ln_bdd_eta}\dd s \lesssim \frac{\min\{1,\sqrt{T}\}}{\alpha(x,\xi)}. \label{step2_2_1_conclusion}
\end{align}

\textit{Contribution of~\eqref{ln_bdd_1}.} 

If $1/(x-(\xi_1+u)(t-s))\leq 1$ for all $s$, then \eqref{ln_bdd_1}$\lesssim 1$ so that the contribution of~\eqref{ln_bdd_1} is bounded by $\min\{1,T\}/\alpha(x,\xi)$. Thus we only consider the integral over $x-(\xi_1+u)(t-s)\leq 1$:
\begin{align}
    & \int_{t-T}^t \mathbf{1}_{x-(\xi_1+u)(t-s)\leq 1} e^{-\nu(\xi)(t-s)/2} |\ln(x-(\xi_1+u)(t-s))| \dd s \label{s_integral}.
\end{align}
In such a case, we have $x-(\xi_1+u)T < 1$. We apply a change of variable $y=x-(\xi_1+u)(t-s)$, with $x-(\xi_1+u)T\leq y\leq \min\{x,1\}$. Then
\begin{align}
  \eqref{s_integral}  &  \leq \int_{x-(\xi_1+u)T}^{\min\{x,1\}} e^{-\nu(\xi)\frac{x-y}{\xi_1+u}/2} |\ln(y)|\frac{1}{\xi_1+u} \dd y  .\label{cov_y}
\end{align}
If $x>2$, we apply~\eqref{fourth} to have
\begin{align*}
   \eqref{s_integral} & \leq e^{-\frac{\nu(\xi)}{2(\xi_1+u)}} \frac{1}{\xi_1+u} \int_{x-(\xi_1+u)T}^{\min\{x,1\}} |\ln(y)| \dd y   \\
   &\lesssim  e^{-\frac{\nu(\xi)}{2(\xi_1+u)}} \frac{1}{\xi_1+u} \int_0^{\min\{1,(\xi_1+u)T\}} |\ln(y)| \dd y  \\
   & \lesssim e^{-\frac{\nu(\xi)}{2(\xi_1+u)}} \frac{\min\{1,(\xi_1+u)T + |(\xi_1+u)T \ln ((\xi_1+u)T)|\}}{\xi_1+u}  .
\end{align*}
For $(\xi_1+u)T \geq  1$ we have $\frac{1}{\xi_1+u}\leq T$, from $\frac{e^{-\frac{\nu(\xi)}{2(\xi_1+u)}}}{\xi_1+u} \lesssim 1$, we further have
\begin{align*}
   \frac{e^{-\frac{\nu(\xi)}{2(\xi_1+u)}}}{\xi_1+u} & \lesssim \min\{1,e^{-\frac{\nu(\xi)}{2(\xi_1+u)}}T\}\lesssim \frac{\min\{1,T\}}{\alpha(x,\xi)}.
\end{align*}
For $(\xi_1+u)T< 1$ and $T>1$, from $\frac{e^{-\frac{\nu(\xi)}{2(\xi_1+u)}}}{\xi_1+u} \lesssim 1$ we have
\begin{align*}
   \eqref{s_integral}\mathbf{1}_{T>1} & \lesssim \frac{e^{-\frac{\nu(\xi)}{2(\xi_1+u)}}}{\xi_1+u} \mathbf{1}_{T>1} \lesssim \frac{\min\{1,T\}}{\alpha(x,\xi)}\mathbf{1}_{T>1}. 
\end{align*}
For $(\xi_1+u)T < 1$ and $T\leq 1$, from $\frac{e^{-\frac{\nu(\xi)}{2(\xi_1+u)}}}{\sqrt{\xi_1+u}} \lesssim 1$ we have
\begin{align*}
    & e^{-\frac{\nu(\xi)}{2(\xi_1+u)}} \frac{(\xi_1+u)T+|(\xi_1+u)T\ln((\xi_1+u)T)|}{\xi_1+u}  \\
    & \lesssim e^{-\frac{\nu(\xi)}{2(\xi_1+u)}} [T+\frac{T}{\sqrt{(\xi_1+u)T}}] \lesssim \sqrt{T} \lesssim \frac{\min\{1,\sqrt{T}\}}{\alpha(x,\xi)}.
\end{align*}

Then we consider $x\leq 2$. We further discuss two cases. The first case is $|\eta_1|/(c\nu_0 x)\leq 1/(2tc\nu_0)$, which implies $x>2t|\eta_1|$. Then
\begin{align*}
   \alpha(x,\xi) &\leq \sqrt{(\xi_1+u)^2 + (c\nu_0)^2 x^2} \leq (\xi_1+u) + c\nu_0 x  \\
   & \leq \frac{x}{2t} + c\nu_0 x \lesssim x,
\end{align*}
here we used $t\gg 1$. Thus we derive $\frac{1}{x}\lesssim 1/\alpha(x,\xi).$ 

Since $0\leq s\leq t$, we have 
\begin{align*}
    x-(\xi_1+u)(t-s)& = \frac{x}{2} + \frac{x}{2} -(\xi_1+u)(t-s)  \\
    & \geq \frac{x}{2} + t|\eta_1| - t|\eta_1| = \frac{x}{2}.
\end{align*}
Then we have
\begin{align*}
    & \int_{t-T}^t e^{-\nu(\xi)(t-s)/2} |\ln(x-(\xi_1+u)(t-s))| \dd s \\
    &\leq \min\{1,T\}\max\{|\ln(x)|,|\ln(x/2)|\}\lesssim \frac{\min\{1,T\}}{x} \leq \frac{\min\{1,T\}}{\alpha(x,\xi)},
\end{align*}
where we have used $x\leq 2$.

The other case is $|\eta_1|/(c\nu_0 x)> 1/(2t c\nu_0)$, which implies $2t|\eta_1|>x$. We have
\begin{align*}
   \alpha(x,\xi) &\leq (\xi_1+u) + (c\nu_0)x \leq (\xi_1+u)(1+2t c\nu_0)  ,
\end{align*}
which leads to
\begin{align*}
  \frac{1}{\xi_1+u}  & \leq \frac{1+2t c\nu_0}{\alpha(x,\xi)}. 
\end{align*}
In this case we apply~\eqref{cov_y} with~\eqref{third} and~\eqref{fourth} to have
\begin{align*}
   \eqref{cov_y} & \leq \eqref{third}\leq \frac{1}{\xi_1+u} \min\{1, |(\xi_1+u)T | + | (\xi_1+u)T \ln((\xi_1+u)T)|\} .
\end{align*}

When $(\xi_1+u)T > 1$, we have $\frac{1}{\xi_1+u}<T$ and
\begin{align*}
   \eqref{cov_y} & \lesssim \frac{1}{\xi_1+u}  \lesssim \min\{T,\frac{t}{\alpha(x,\xi)}\} \lesssim \frac{\min\{T,t\}}{\alpha(x,\xi)}  .
\end{align*}
When $(\xi_1+u)T \leq 1$, for $T>1$ we use~\eqref{third} to bound
\begin{align*}
  \eqref{cov_y}\mathbf{1}_{T>1}  & \lesssim \frac{\mathbf{1}_{T>1}}{\xi_1+u} \lesssim \frac{\mathbf{1}_{T>1} t}{\alpha(x,\xi)}.   
\end{align*}
For $T<1$, we follow the computation in~\eqref{xi+u_T_leq_1} to have
\begin{align*}
    &\eqref{cov_y} \lesssim T+\sqrt{T} + |T\ln(T)| + T \mathbf{1}_{\xi_1+u\leq 1}|\ln(\xi_1+u)| \\
    & \lesssim \sqrt{T} + T \mathbf{1}_{\frac{1}{\xi_1+u}\geq 1} \ln(\frac{1}{\xi_1+u}) \\
    &\lesssim \sqrt{T}  + T \ln(\frac{1+2tc\nu_0}{\alpha(x,\xi)}) \lesssim \sqrt{T} +T\ln(t) + T|\ln(\alpha(x,\xi))|  \\
   & \lesssim \frac{\sqrt{T} +T\ln(t) }{\alpha(x,\xi)}.
\end{align*}
In the last inequality we used $\alpha(x,\xi)\lesssim 1$ to have $|\ln(\alpha(x,\xi))|\lesssim \frac{1}{\alpha(x,\xi)}$.

We conclude that
\begin{align}
    & \int_{t-T}^t e^{-\nu(\xi)(t-s)/2} \eqref{ln_bdd_1} \dd s \lesssim \frac{\mathbf{1}_{T>1}t + \mathbf{1}_{T\leq 1}[\sqrt{T}+T\ln(t)]}{\alpha(x,\xi)}. \label{step2_2_2_conclusion}
\end{align}

\textbf{Step 3: conclusion}

In summary, in the case of $\xi_1+u<0$ in Step 2-1 and the contribution of~\eqref{ln_bdd_eta} in the case of $\xi_1+u>0$ in Step 2-2, we collect~\eqref{step2_1_1_conclusion}, \eqref{step_2_1_2_conclusion} and~\eqref{step2_2_1_conclusion} to have
\begin{align*}
    \int_{t-T}^t e^{-\nu(\xi)(t-s)/2} \eqref{ln_bdd_eta} + \mathbf{1}_{\xi_1+u<0} \eqref{ln_bdd_1}  \dd s&  \lesssim \frac{\min\{1,\sqrt{T}\}}{\alpha(x,\xi)},
\end{align*}
which satisfies~\eqref{nln_large} for $T>1$ and \eqref{nln_small} for $T\leq 1$.

For the contribution of~\eqref{ln_bdd_1} in the case of $\xi_1+u>0$ in Step 2-2, we use~\eqref{step2_2_2_conclusion} to have
\begin{align*}
 \int_{t-T}^t  \mathbf{1}_{\xi_1+u>0} \eqref{ln_bdd_1}  \dd s&  \lesssim \frac{\mathbf{1}_{T>1}t + \mathbf{1}_{T\leq 1}[\sqrt{T}+T\ln(t)]}{\alpha(x,\xi)},
\end{align*}
which also satisfies~\eqref{nln_large} and~\eqref{nln_small}.

\end{proof}

The proof of Lemma \ref{lemma:NLN} directly implies the following result:
\begin{lemma}\label{lemma:NLN_inner}
\begin{equation}\label{NLN_inner}
\int^t_0 \dd s e^{-\nu(\xi)(t-s)} \int_{\mathbb{R}^3} \dd \xi' \frac{e^{-C|\xi'|^2}}{\alpha(x-(\xi_1+u)(t-s),\xi')} \lesssim \frac{t}{\alpha(x,\xi)},    
\end{equation}

\begin{equation}\label{NLN_two}
\int^t_0 \dd s e^{-\nu(\xi)(t-s)}\int_{\mathbb{R}^3} \dd \xi' w^{-1}(\xi') \int_{\mathbb{R}^3} \dd \xi'' \frac{e^{-C|\xi'-\xi''|^2}}{|\xi'-\xi''|\alpha(x-(\xi_1+u)(t-s),\xi'')} \dd \xi'' \lesssim \frac{t}{\alpha(x,\xi)}.
\end{equation}

\end{lemma}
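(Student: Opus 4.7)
The plan is to reduce both estimates to the $\dd s$ integration already carried out in Step 2 of Lemma~\ref{lemma:NLN}; as the remark preceding the statement suggests, the two inequalities differ from Lemma~\ref{lemma:NLN} only in how the velocity integrations are handled in Step 1.

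For \eqref{NLN_inner}, I would first observe that $\alpha(x-(\xi_1+u)(t-s),\xi')$ depends on $\xi'$ only through $\xi_1'$, so the $(\xi_2',\xi_3')$-integration produces a constant by the Gaussian $e^{-C|\xi'|^2}$. The remaining one-dimensional integral
\begin{align*}
\int_{\mathbb{R}} \frac{e^{-C|\xi'_1|^2}}{\sqrt{(\xi'_1+u)^2+(c\nu_0)^2(x-(\xi_1+u)(t-s))^2}}\,\dd\xi'_1
\end{align*}
is of the same shape as the one treated in Step~1 of Lemma~\ref{lemma:NLN}: after the shift $\xi'_1+u\mapsto \xi'_1$, the tail $|\xi_1'|>1$ contributes $O(1)$ thanks to the Gaussian, while the near-origin region $|\xi'_1|\le 1$ produces exactly the logarithmic bound \eqref{ln_bdd_1}. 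The $\dd s$ integration is then absorbed verbatim by the $\xi_1+u<0$ computation leading to \eqref{step_2_1_2_conclusion} and the $\xi_1+u>0$ computation leading to \eqref{step2_2_2_conclusion}, applied with $T=t>1$, yielding the claimed $t/\alpha(x,\xi)$.

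For \eqref{NLN_two}, the inner $\dd\xi''$ integral is literally the $\dd\xi'$ integral computed in Step~1 of Lemma~\ref{lemma:NLN}, with the variables $(\xi,\xi')$ there relabelled as $(\xi',\xi'')$ here; hence it is bounded by a constant plus the logarithmic expressions \eqref{ln_bdd_eta} and \eqref{ln_bdd_1}, where $\eta_1$ is now $\xi'_1+u$. I would then integrate this against $w^{-1}(\xi')=e^{-\theta|\xi'|^2}$: the exponential weight absorbs any polynomial growth in $|\xi'|$ coming from the logarithms, and the only delicate singularity is $|\ln|\xi'_1+u||$ arising from \eqref{ln_bdd_eta}. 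This factor is integrable against $w^{-1}$ uniformly in $u$ by splitting into $|\xi'_1+u|\geq 1$ (where the logarithm is controlled by $\langle\xi'\rangle$ and killed by the Gaussian) and $|\xi'_1+u|<1$ (where $|\ln|\xi'_1+u||\lesssim |\xi'_1+u|^{-1/2}$ is locally integrable). The resulting expression again has the form $1+|\ln|x-(\xi_1+u)(t-s)||$, and Step~2 of Lemma~\ref{lemma:NLN} applies without modification.

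The only real obstacle is the bookkeeping needed to track which logarithmic singularities survive each reduction and to verify uniform-in-$u$ integrability of $|\ln|\xi'_1+u||$ against $w^{-1}$; once this is done, the conclusion follows immediately by quoting the $T=t$ case of Lemma~\ref{lemma:NLN}, so no genuinely new estimate is required.
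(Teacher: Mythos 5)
Your treatment of \eqref{NLN_inner} is correct and matches the paper: drop the transverse variables by the Gaussian, apply the shift and split into $|\xi_1'|\le 1$ and $|\xi_1'|>1$ to land on $1+\eqref{ln_bdd_1}$, then invoke \eqref{step_2_1_2_conclusion} and \eqref{step2_2_2_conclusion} with $T=t$.

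Your route for \eqref{NLN_two} is genuinely different from the paper's, and as written it has a gap. The paper never computes the inner $\dd\xi''$ integral to the end; it first collapses the transverse variables, then integrates $e^{-\theta|\eta_1|^2}$ against $e^{-C|\eta_1-\xi_1''|^2}$ in $\dd\eta_1$ to produce a new Gaussian $e^{-C_1|\xi_1''|^2}$, reducing everything to the \eqref{NLN_inner} form $1+\eqref{ln_bdd_1}$ with no $\eta_1$-dependent logarithm ever appearing. You instead run the full Step~1 on the inner $\dd\xi''$ integral and then integrate the result against $w^{-1}(\xi')$. That is workable, and your treatment of the $|\ln|\xi_1'+u||$ factor is correct and uniform in $u$. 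The gap is in the line \emph{``The resulting expression again has the form $1+|\ln|x-(\xi_1+u)(t-s)||$, and Step~2 applies without modification.''} The bound $1+|\ln Y|$ (with $Y=x-(\xi_1+u)(t-s)$ and no cutoff) is strictly lossier than $1+\eqref{ln_bdd_1}$: the latter tends to a constant as $Y\to\infty$, whereas the former blows up. Step~2-1 (the case $\xi_1+u<0$) crucially exploits this decay: there $Y\ge x$, so for $c\nu_0 x\ge1$ the paper uses $\eqref{ln_bdd_1}\lesssim 1$ and concludes. Replacing $\eqref{ln_bdd_1}$ by $|\ln Y|$ in this regime gives
\[
\int_{t-T}^{t}e^{-\nu(\xi)(t-s)/2}\,|\ln Y|\,\dd s \ \gtrsim\ \frac{\ln x}{\nu(\xi)},
\]
which is not bounded by $t/\alpha(x,\xi)=t$ when $x$ is large. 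So Step~2 does \emph{not} apply to $1+|\ln Y|$ without modification. The fix is small: you must carry the cutoff, i.e.\ bound the inner $\dd\xi''$ integral by a constant when $Y\ge1$ (both $\eqref{ln_bdd_eta}$ and $\eqref{ln_bdd_1}$ reduce to $O(1+|\ln|\xi_1'+u||)$ there, which the Gaussian kills), and only invoke $1+|\ln Y|$ on $\{Y<1\}$. After the $\dd\xi'$ integration this yields $1+\mathbf{1}_{Y<1}|\ln Y|\lesssim 1+\eqref{ln_bdd_1}$, to which Step~2's $\eqref{ln_bdd_1}$-analysis does apply. The paper's Gaussian-convolution route avoids this bookkeeping entirely, which is why it is cleaner.
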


\begin{proof}

\textit{Proof of \eqref{NLN_inner}.} The integral over $\dd \xi'$ is bounded as
\begin{align*}
    &  1 + \int_0^1 \frac{1}{\sqrt{|\xi'_1|^2 + (c\nu_0)^2(x-(\xi_1+u)(t-s))^2}} \dd \xi_1' = 1 + \eqref{ln_bdd_1}.
\end{align*}
Here, we applied the same computation as \eqref{xi_integral}. Then follow Step 2 in the proof of Lemma \ref{lemma:NLN} to conclude the lemma.

\textit{Proof of \eqref{NLN_inner}.} Again following the computation of \eqref{xi_integral}, with $\eta = \xi'+(u,0,0)$, the $\dd \xi' \dd \xi''$ integral is bounded as
\begin{align*}
    & \int_{\mathbb{R}^3} e^{-\theta |\eta|^2} \dd \eta \int_{\mathbb{R}} \frac{e^{-C|\eta_1-\xi_1''|^2}}{\sqrt{|\xi_1''|^2 + (c\nu_0)^2 (x-(\xi_1+u)(t-s))^2}} \dd \xi_1'' \\
    & \lesssim \int_{\mathbb{R}} \frac{e^{-C_1|\xi_1''|^2}}{\sqrt{|\xi_1''|^2 + (c\nu_0)^2 (x-(\xi_1+u)(t-s))^2}}  \dd \xi_1'' \lesssim 1 + \eqref{ln_bdd_1}.
\end{align*}
Here we choose $C_1 = \frac{\e}{2}C$ for some $\e > 0$ such that $\theta > \e C$ and 
\begin{align*}
- \theta |\eta_1|^2 - C|\eta_1-\xi''_1|^2   & \leq -\theta |\eta_1|^2 -\e C|\eta_1-\xi''_1|^2 \leq (-\theta-\e C) |\eta_1|^2  + 2\e C \eta_1 \xi'' - \e C |\xi''|^2 \\
 & \leq (\theta - \e C + 2\e C) - (\e C - \frac{\e C}{2}) |\xi''|^2.
\end{align*}

\end{proof}

\hide

The next lemma is a direct consequence of Lemma \ref{lemma:NLN}:
\begin{lemma}\label{lemma:p_x_K_bdd}
\begin{align*}
 \alpha(x,\xi) \int^t_{t-T} \dd s e^{-\nu(\xi)(t-s)/2}  \p_x K f(x,\xi)  & \lesssim \Vert \alpha \p_x f\Vert_{L^\infty_{x,\xi}}.   
\end{align*}
\end{lemma}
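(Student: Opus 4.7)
The plan is to derive this as a direct corollary of Lemma~\ref{lemma:NLN}. Reading $\p_x K f$ through the kernel representation $Kf(y,\xi)=\int_{\mathbb{R}^3} \mathbf{k}(\xi,\xi')f(y,\xi')\dd \xi'$ and commuting the $x$-derivative past the $\xi'$-integral gives
\begin{align*}
|\p_x K f(y,\xi)| &\leq \int_{\mathbb{R}^3} |\mathbf{k}(\xi,\xi')|\,|\p_x f(y,\xi')|\dd \xi'\\
&\lesssim \int_{\mathbb{R}^3} \frac{e^{-\varrho|\xi-\xi'|^2}}{|\xi-\xi'|}\,|\p_x f(y,\xi')|\dd \xi',
\end{align*}
where the final inequality uses the Grad bound $|\mathbf{k}|\lesssim \mathbf{k}^\varrho$ from Lemma~\ref{lemma:k_gamma}. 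Evaluating at the characteristic point $y=x-(\xi_1+u)(t-s)$ (as is implicit in the statement, since otherwise the integrand would be $s$-independent) brings the expression into the exact shape handled by Lemma~\ref{lemma:NLN}.

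The second step is to factor out $\|\alpha\p_x f\|_{L^\infty_{x,\xi}}$ using the trivial pointwise bound $|\p_x f(y,\xi')|\leq \|\alpha \p_x f\|_{L^\infty_{x,\xi}}/\alpha(y,\xi')$. Substituting and exchanging the order of integration yields
\begin{align*}
&\int_{t-T}^t e^{-\nu(\xi)(t-s)/2} |\p_x K f(x-(\xi_1+u)(t-s),\xi)|\dd s\\
&\lesssim \|\alpha \p_x f\|_{L^\infty_{x,\xi}} \int_{t-T}^t \dd s\, e^{-\nu(\xi)(t-s)/2} \int_{\mathbb{R}^3} \frac{e^{-\varrho |\xi-\xi'|^2}}{|\xi-\xi'|\,\alpha(x-(\xi_1+u)(t-s),\xi')}\dd \xi'.
\end{align*}
The double integral on the right is now exactly the object estimated by Lemma~\ref{lemma:NLN}, so either \eqref{nln_large} or \eqref{nln_small} delivers an upper bound of the form $C(t,T)/\alpha(x,\xi)$, with prefactor depending only on the (fixed) time parameters $t$ and $T$.

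Multiplying through by $\alpha(x,\xi)$ cancels the factor $1/\alpha(x,\xi)$ on the right-hand side and leaves $\lesssim \|\alpha \p_x f\|_{L^\infty_{x,\xi}}$, which is the claim. No further analysis is required, since the only nontrivial estimate (the interplay between the small-$s$ blow-up of $\mathbf{k}^\varrho$ and the inverse weight $1/\alpha$) is already encoded in Lemma~\ref{lemma:NLN}. The only point that requires any care is bookkeeping the $t$ and $\sqrt{T}+T\ln t$ prefactors from \eqref{nln_large}--\eqref{nln_small} and absorbing them into the implicit constant once the ambient application (inside the weighted $C^1$ estimate of Section~\ref{sec:regularity}) fixes these parameters; otherwise the proof is purely a substitution.
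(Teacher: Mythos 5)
Your argument is exactly the paper's intended one: the lemma is stated there as a direct consequence of Lemma \ref{lemma:NLN}, and in the body (the treatment of $K\p_x g$ in \eqref{p_x_4}) the paper proceeds the same way---bound the kernel by $\mathbf{k}^\varrho$ from Lemma \ref{lemma:k_gamma}, extract $\Vert \alpha \p_x f\Vert_{L^\infty_{x,\xi}}$ via the pointwise bound $1/\alpha$ at the characteristic point, apply \eqref{nln_large}--\eqref{nln_small}, and absorb the $t$-dependent prefactor into the constant since $t$ is large but fixed. Your reading of the loosely written integrand as evaluated at $x-(\xi_1+u)(t-s)$ is the correct interpretation and matches how the estimate is used in the weighted $C^1$ argument.
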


\unhide

\ \\

\section{Continuity and exponential decay in $\xi$.}\label{sec:continuity}
In this section, we conclude justify the assumption \eqref{assume_soln} and conclude Theorem \ref{thm:continuity}.

We start from proving the continuity and $w$-weighted $L^\infty$ estimate of the linearized penalized problem \eqref{pen_prob} in Proposition \ref{prop:pen_continuous}, then we will move onto the nonlinear penalized problem \eqref{pen_nonlin_prob} in Proposition \ref{prop:nonlinear_continuous}. At the end of this section, we will make use of the penalized problem to recover the solution the boundary layer problem \eqref{equation_f}.

To define the penalized problem \eqref{pen_prob}, we denote $\prod_+$ as the orthogonal projection on $X_+$:
\begin{equation*}
\prod_+ g = \langle gX_+ \rangle X_+.
\end{equation*}
Here, $X_+$ is defined in \eqref{basis}, and the inner product is defined in \eqref{inner_product}.

Denote $\phi_u$ to be the eigen-function of the following eigen-value problem:
\begin{equation}\label{eigen_prob}
\begin{cases}
& \mathcal{L} \phi_u = \tau_u (\xi_1 + u)\phi_u ,   \\
&\langle (\xi_1+u)\phi_u^2 \rangle = -u.
\end{cases}
\end{equation}

\ \\

\subsection{Continuity and $w$-weighted $L^\infty$ estimate of the eigen-value problem \eqref{eigen_prob}.}

For $u\neq 0$, we define 
\begin{equation}\label{p_g}
\psi_u := \frac{\phi_u - \phi_0}{u}.
\end{equation}

For $u$ near $0$, it was shown in \cite{golse} that there exists solution $\phi_u \in L^2_\xi \cap L^{\infty,s}_\xi$ to \eqref{eigen_prob}:
\begin{proposition}[\textbf{Proposition 3.1 in \cite{golse}}]\label{prop:eigen_exists}
There exists $r>0$ and real analytic function $u\to \tau_u\in \mathbb{R}^3$ with $|\tau_u|\sim u$ and real analytic map $u\to \phi_u  \in \mathcal{H}\cap dom(\mathcal{L})$ with $0<|u|<r$ such that $\phi_u$ satisfies \eqref{eigen_prob}. Furthermore, there exists a positive constant $C_s$ such that for each $s\geq 0$, $\phi_u$ satisfies
\begin{equation}\label{phi_u_linfty}
\Vert (1+|\xi|)^s \phi_u\Vert_{\lftyv} \leq C_s
\end{equation}
for all $s\geq 0$ uniformly in $u\in (-r,0)\cup (0,r)$.

\end{proposition}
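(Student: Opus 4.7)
The plan is to build $(\tau_u,\phi_u)$ by analytic perturbation from the $u=0$ situation and then bootstrap from $L^2$ to the polynomial-weighted $L^\infty$ bound via the Grad kernel estimate. At $u=0$, the constraint $|\tau_u|\sim|u|$ forces $\tau_0 = 0$, so the equation collapses to $\mathcal{L}\phi_0 = 0$; thus $\phi_0$ must lie in $\ker\mathcal{L}\cap\mathcal{H} = \mathrm{span}(X_+,X_-,X_0)$, which is three-dimensional after imposing the symmetry $\phi\circ\mathcal{R}=\phi$ (this annihilates the $\xi_2\sqrt{M},\xi_3\sqrt{M}$ components). On this space the quadratic form $\langle\xi_1\,\cdot^2\rangle$ is diagonal with eigenvalues $\pm\sqrt{5/3}$ and $0$, and the $u\to 0$ limit of the normalization $\langle(\xi_1+u)\phi_u^2\rangle = -u$ pins $\phi_0$ to the isotropic cone of this form, fixing it up to a scalar to be determined later.

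Next I would run a Lyapunov--Schmidt reduction. Decompose $\phi_u = \phi_0 + h_u + \chi_u$ with $h_u\in\ker\mathcal{L}\cap\mathcal{H}$ and $\chi_u\perp\ker\mathcal{L}$. The projection of $\mathcal{L}\chi_u = \tau_u(\xi_1+u)\phi_u$ onto $(\ker\mathcal{L})^\perp$ is solvable using the Fredholm bounded inverse of $\mathcal{L}$ restricted there, expressing $\chi_u$ as an analytic function of $(u,\tau_u,h_u)$. The projection onto $\ker\mathcal{L}\cap\mathcal{H}$ yields a finite-dimensional compatibility system in $(u,\tau_u,h_u)$ whose Jacobian at $u=0$ is governed by the $3\times 3$ matrix $\langle X_i\xi_1 X_j\rangle_{i,j\in\{+,-,0\}}$. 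The analytic implicit function theorem then produces a unique real-analytic branch $(\tau_u,\phi_u)$. The normalization $\langle(\xi_1+u)\phi_u^2\rangle = -u$ is enforced by an analytic rescaling, which is legitimate because the $u$-derivative of this pairing at $0$ is nonzero by construction; the same calculation pins down $|\tau_u|\sim|u|$ at leading order.

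For the polynomial-weighted $L^\infty$ bound \eqref{phi_u_linfty} I would iterate the identity $\nu(\xi)\phi_u = K\phi_u + \tau_u(\xi_1+u)\phi_u$. Since $|\tau_u(\xi_1+u)|/\nu(\xi)\lesssim |u|\ll 1$ by \eqref{frequency}, the last term absorbs into the left side, yielding $\phi_u = \tilde\nu^{-1}K\phi_u$ with $\tilde\nu\gtrsim 1+|\xi|$. Multiplying by $(1+|\xi|)^s$ and applying the Grad estimate $\int\mathbf{k}(\xi,\xi')(1+|\xi|)^s(1+|\xi'|)^{-s}\,d\xi'\lesssim (1+|\xi|)^{-1}$ shows that each application of $\tilde\nu^{-1}K$ gains an extra factor of $(1+|\xi|)^{-1}$ on weighted $L^\infty$. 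A standard two-step iteration (Grad's double-kernel argument converting $L^2$ control into $L^\infty$) then upgrades the $L^2$ bound coming from the construction to $(1+|\xi|)^s\phi_u\in L^\infty_\xi$ for every $s\geq 0$, uniformly in $|u|<r$.

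The main obstacle is the nondegeneracy of the Lyapunov--Schmidt Jacobian at $u=0$, which is exactly the statement that we are outside the singular set $\{0,\pm\sqrt{5/3}\}$. Keeping track of which direction in $\mathrm{span}(X_+,X_-,X_0)$ the starting eigenvector $\phi_0$ occupies, and verifying that the derivative of the compatibility system with respect to $\tau_u$ is nonzero at $u=0$, requires a careful analysis of the matrix $\langle X_i\xi_1 X_j\rangle$ paired against $\phi_0$ in its isotropic cone; this is also the step that ultimately produces the scaling $|\tau_u|\sim|u|$ rather than the degenerate $\tau_u\equiv 0$.
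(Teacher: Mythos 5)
This proposition is not actually proved in the paper: it is imported from Bernhoff--Golse \cite{golse} (their Proposition~3.1), and the remark following it only translates between the two linearization conventions $F=M+Mf$ and $F=M+\sqrt{M}f$, i.e.\ replaces $\phi_u\sqrt{M}$ by $\phi_u$. So there is no internal argument to compare against, and your sketch is an independent attempt. It also takes a genuinely different route from what \cite{golse} does. Rather than running Lyapunov--Schmidt directly on the generalized eigenvalue problem $\mathcal{L}\phi=\tau(\xi_1+u)\phi$, with its unbounded indefinite weight, Bernhoff--Golse rewrite the equation as the fixed-point identity $\phi=[\nu-z(\xi_1+u)]^{-1}K\phi$ and treat $1$ as an eigenvalue of the \emph{compact} operator family $T(z):=[\nu-z(\xi_1+u)]^{-1}K$, which is holomorphic in $z$ on the neighborhood where $\nu-z(\xi_1+u)\gtrsim 1+|\xi|$. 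Kato's analytic perturbation theory for compact operators then yields a holomorphic eigenbranch $\phi^0_{z(u)}$, and $\phi_u$ is obtained by the normalization $\phi_u=\phi^0_{z(u)}/\sqrt{-\langle(\xi_1+u)(\phi^0_{z(u)})^2\rangle/u}$. That reformulation is precisely what sidesteps the unboundedness your approach has to face head-on.

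Your sketch is plausible in outline but leaves the crux unresolved, and you say so yourself. The nondegeneracy of the Lyapunov--Schmidt compatibility system at $u=0$ is not a routine verification: the form $\langle\xi_1\,\cdot\cdot\rangle$ on $\mathrm{span}(X_+,X_-,X_0)$ has signature $(+,-,0)$, so its isotropic set $\{aX_++bX_-+cX_0:a^2=b^2\}$ is a union of two planes, and the limiting normalization does not pin $\phi_0$ down even up to scalar; selecting the correct starting direction is itself part of the solvability analysis that is supposed to produce $\tau_u$ and its $O(u)$ scaling, so deferring it leaves the whole argument conditional. A second unaddressed point is that you invoke the analytic implicit function theorem for a map whose linearization involves the unbounded multiplication operator $(\xi_1+u)$; without the compact-resolvent reformulation (or equivalent a priori weighted estimates) it is not clear between which Banach spaces the relevant map is analytic, which is exactly why \cite{golse} works with $T(z)$ instead. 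The final bootstrap from $L^2$ to $(1+|\xi|)^s$-weighted $L^\infty$ via the Grad kernel estimate and absorption of $\tau_u(\xi_1+u)$ into $\nu$ is correct and is also how the cited reference proceeds.
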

 
\begin{remark}
In \cite{golse}, the linearization reads $F = M+Mf$, and their statement regarding \eqref{eigen_prob} reads as following. The eigen-function of the following eigen-value problem 
\begin{equation*}
\begin{cases}
&  \frac{-Q(M\phi_u,M) - Q(M,M\phi_u)}{M} = \tau_u (\xi_1 + u)\phi_u    \\
&\int_{\mathbb{R}^3} (\xi_1+u)\phi_u^2 M \dd \xi = -u
\end{cases}
\end{equation*}
satisfies
\begin{align*}
    & \Vert (1+|\xi|)^s \phi_u \sqrt{M}\Vert_{L^\infty_\xi} \leq C_s.
\end{align*}
By substituting $\phi_u \sqrt{M}$ by $\phi_u$, this statement is exactly Proposition \ref{prop:eigen_exists}.

\end{remark}

First we show that the eigen-function in \eqref{eigen_prob} is continuous and decays exponentially in $\xi\in \mathbb{R}^3$.

\begin{lemma}\label{lemma:eigen_continuous}
For the eigen-function $\phi_u$ in Proposition \ref{prop:eigen_exists}, we have
\begin{equation}\label{eigen_continuous}
\phi_u \in C(\mathbb{R}^3).
\end{equation}
Recall the exponential weight in \eqref{w_weight}, the eigen-function in \eqref{eigen_prob} further satisfies
\begin{equation}\label{eigen_expo}
\Vert w \phi_u \Vert_{L^\infty_{\xi}} < \infty.
\end{equation}

\end{lemma}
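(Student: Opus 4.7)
The plan is to establish the exponential-weighted $L^\infty$ bound \eqref{eigen_expo} first, by reducing to an eigen-value problem for a conjugated operator, and then to deduce the continuity \eqref{eigen_continuous} directly from the integral form of the eigen-value equation. The conjugation idea is already sketched in Section \ref{sec:proof_strategy}: with $\mathcal{L}_\theta = \nu - K_\theta$ defined there, the algebraic identity
\begin{equation*}
\mathcal{L}(e^{-\theta|\xi|^2}\tilde{\phi}) = e^{-\theta|\xi|^2}\mathcal{L}_\theta \tilde{\phi}
\end{equation*}
implies that if $\tilde{\phi}_u$ solves the modified eigen-value problem
\begin{equation*}
\mathcal{L}_\theta \tilde{\phi}_u = \tau_u(\xi_1+u)\tilde{\phi}_u, \qquad \langle (\xi_1+u)e^{-2\theta|\xi|^2}\tilde{\phi}_u^2\rangle = -u,
\end{equation*}
then $\phi_u := e^{-\theta|\xi|^2}\tilde{\phi}_u$ solves the original problem \eqref{eigen_prob} with the same eigenvalue $\tau_u$.

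The existence of such a $\tilde{\phi}_u$ with polynomial $L^\infty$ bounds of arbitrary order will follow by replaying the analytic-perturbation construction of Proposition \ref{prop:eigen_exists} (Proposition 3.1 in \cite{golse}) with $\mathcal{L}$ replaced by $\mathcal{L}_\theta$. The operator $\mathcal{L}_\theta$ is self-adjoint on the weighted Hilbert space $L^2(\mathbb{R}^3, e^{-2\theta|\xi|^2}d\xi)$, via the isometry $\tilde{\phi}\mapsto e^{-\theta|\xi|^2}\tilde{\phi}$ to $L^2(\mathbb{R}^3,d\xi)$, so that its kernel structure and spectral gap are inherited from those of $\mathcal{L}$. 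The compactness of $K_\theta$ and the pointwise/integrated bounds needed to bootstrap an $L^2$ eigen-function up to polynomial $L^\infty$ decay are furnished by Lemma \ref{lemma:k_theta}. Uniqueness of the normalized eigen-function in Proposition \ref{prop:eigen_exists} then identifies $\phi_u=e^{-\theta|\xi|^2}\tilde{\phi}_u$ with the eigen-function given there, and \eqref{eigen_expo} follows from $\Vert w\phi_u\Vert_{\lftyv} = \Vert\tilde{\phi}_u\Vert_{\lftyv}\leq C_0<\infty$.

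For the continuity \eqref{eigen_continuous}, I will rewrite the eigen-value equation as the fixed-point identity
\begin{equation*}
\phi_u(\xi) = \frac{K\phi_u(\xi)}{\nu(\xi) - \tau_u(\xi_1+u)}.
\end{equation*}
By \eqref{frequency} and the bound $|\tau_u|\lesssim |u|$ from Proposition \ref{prop:eigen_exists}, for $|u|<r$ sufficiently small the denominator is a continuous function of $\xi$ bounded below by $c(1+|\xi|)$. Continuity of the numerator $K\phi_u(\xi)=\int\mathbf{k}(\xi,\xi')\phi_u(\xi')\,d\xi'$ follows from dominated convergence: along any $\xi_n\to\xi$ the integrand converges pointwise a.e., and the Grad bound $|\mathbf{k}(\xi,\xi')|\lesssim e^{-C|\xi-\xi'|^2}/|\xi-\xi'|$ combined with the exponential decay of $\phi_u$ just established produces a locally uniform integrable majorant; the $|\xi-\xi'|^{-1}$ singularity of $\mathbf{k}_2$ at $\xi'=\xi$ is integrable in $\mathbb{R}^3$.

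The main technical obstacle will be verifying that the construction in \cite{golse}, which leverages the Fredholm/spectral structure of $\mathcal{L}$ on $L^2$ together with a bootstrap to polynomial $L^\infty$, depends only on those structural properties of the kernel $\mathbf{k}$ that $\mathbf{k}_\theta$ also enjoys via Lemma \ref{lemma:k_theta}. Once this is confirmed, the remaining bookkeeping---in particular that the normalization of $\tilde{\phi}_u$ can be chosen to match the normalization of $\phi_u$ in Proposition \ref{prop:eigen_exists}---is routine.
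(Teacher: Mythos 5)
Your argument is essentially the same as the paper's: both establish the exponential decay by conjugating to $\mathcal{L}_\theta = \nu - K_\theta$, rerunning the Proposition~3.1 construction from \cite{golse} for this modified operator using the structural bounds of Lemma~\ref{lemma:k_theta}, and then transferring back via $\phi_u = e^{-\theta|\xi|^2}\tilde\phi_u$ with a rescaling to match normalizations; and both deduce continuity from the fixed-point identity $\phi_u = K\phi_u / (\nu - \tau_u(\xi_1+u))$ together with integrability of the Grad kernel. The only cosmetic differences are the order (you do decay first, the paper does continuity first, which requires only the polynomial $L^\infty$ bound of Proposition~\ref{prop:eigen_exists}, not the exponential one) and the mechanism for continuity of $K\phi_u$ (you invoke a dominated-convergence-style argument, the paper instead observes differentiability using $\nabla_\xi\mathbf{k}\in L^1$ from \eqref{nabla_k_theta}); both are standard and fine, though your dominated-convergence phrasing glosses over the moving singularity of $\mathbf{k}(\xi_n,\cdot)$ at $\xi'=\xi_n$, which is cleaner to handle by splitting near/far from the diagonal.
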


\begin{proof}
\hide
In the proof of Proposition 3.1 in \cite{golse}, the eigen-function $\phi_u$ was finally chosen to be
\begin{align*}
    \phi_u := \frac{\phi^0_{z(u)}}{\sqrt{-\langle (\xi_1+u) (\phi^0_{z(u)})^2\rangle / u}},
\end{align*}
where $\phi^0_{z(u)}$ satisfies
\begin{align}
    \phi^0_{z(u)} = \frac{1}{\nu - z(u)(\xi_1+u)} K \phi^0_{z(u)}. \label{phi_0_eqn}
\end{align}
Here $z(u)$ a holomorphic map and satisfies
\begin{align*}
    & \nu - z(u)(\xi_1+u)\geq \frac{1}{2}\nu_- (1+|\xi|)>0.
\end{align*}

\begin{align*}
    &   \Vert \phi^0_{z(u)} \Vert_{\mathcal{H}} = 1, \ \ \Vert \sqrt{M}\phi_{z(u)}^0\Vert_{L^{\infty,s}} \leq C_s.
\end{align*}
To prove the lemma, it suffices to justify the continuity of $\phi^0_{z(u)}$. We use contradiction argument to show that $\phi^0_{z(u)}$ in \eqref{phi_0_eqn} is continuous. 
\unhide

\textit{Proof of \eqref{eigen_continuous}}. From \eqref{eigen_prob}, the solution $\phi_u$ satisfies
\begin{align}
  \phi_u  &   =  \frac{1}{\nu(\xi) - \tau_u (\xi_1+u)} K\phi_u  . \label{phi_K}
\end{align}
Since $|u|<r$ for some small $r$, on RHS we have $\nu(\xi) - \tau_u (\xi_1+u)> \frac{\nu(\xi)}{2}$. We use the contradiction argument to show that $\phi_u$ is continuous.

Suppose $\phi_u$ is not continuous at some $\xi = \xi^0$, then at $\xi=\xi^0$, RHS of \eqref{phi_K} reads
\begin{align*}
    & \frac{1}{\nu(\xi^0) - \tau_u(\xi^0_1 + u)} \int_{\xi'\in\mathbb{R}^3} \mathbf{k}(\xi^0,\xi') \phi_u(\xi') \dd \xi'.
\end{align*}
The first term is continuous in $\xi^0$ due to $\tau_u \ll 1$. For the second term, note that from \eqref{phi_u_linfty}, $\phi_u \in L^\infty_\xi$, the second term is differentiable at $\xi^0$ from \eqref{nabla_k_theta}:
\begin{align*}
    & \Big| \int_{\xi'\in \mathbb{R}^3} \nabla_{\xi} \mathbf{k}(\xi^0, \xi') \phi_u (\xi') \dd \xi' \Big|\lesssim  [1+ \xi_0]^2 \int_{\mathbb{R}^3}     \frac{e^{-C|\xi'-\xi^0|^2}}{|\xi'-\xi^0|^2} \dd \xi' <\infty.
\end{align*}
Since both terms are continuous, we conclude \eqref{eigen_continuous} by contradiction.

\textit{Proof of \eqref{eigen_expo}}.  For $\theta<\frac{1}{4}$, we consider a variant of perturbation around the Maxwellian as
\begin{align*}
    & F(x,\xi) = M + \sqrt{M} e^{-\theta |\xi|^2} f.
\end{align*}
Then we denote the corresponding linear Boltzmann operator as
\begin{align}
  \mathcal{L}_\theta f  & = -\frac{Q(M,\sqrt{M}e^{-\theta |\xi|^2} f)+Q(\sqrt{M}e^{-\theta |\xi|^2} f, M)}{\sqrt{M}e^{-\theta |\xi|^2}}   
  = \nu(\xi) f - K_\theta f.  \label{L_1}
\end{align}
With the extra weight $e^{\theta|\xi|^2}$ and $\theta < \frac{1}{4}$, $K_\theta f$ can be expressed as
\begin{align*}
   & K_\theta f(\xi) = \int_{\mathbb{R}^3} \mathbf{k}_\theta(\xi,\xi') f(\xi') \dd \xi' = \int_{\mathbb{R}^3}  \mathbf{k}(\xi,\xi') \frac{e^{\theta|\xi|^2}}{e^{\theta |\xi'|^2}} f(\xi') \dd \xi'  , 
\end{align*}
here $\mathbf{k}_\theta(\xi,\xi')$ is given in Lemma \ref{lemma:k_theta}, which has the form in \eqref{k1}, \eqref{k2} with different coefficients in the exponent( due to \eqref{k_theta} and see Lemma 3 in \cite{G} for the proof).

Therefore, $\mathcal{L}_\theta$ is still self-adjoint, and $K_\theta$ is still a bounded operator from $L^2_{\xi}$ to $L^{\infty,1/2}_\xi$, and from $L^{\infty,s}_\xi$ to $L^{\infty,s+1}_\xi$. Here
\[L^{\infty,s}(\mathbb{R}^3): = \{\phi \in L^\infty(\mathbb{R}^3)|(1+|\xi|)^s \phi \in L^\infty(\mathbb{R}^3)\}.\]
The kernel of $\mathcal{L}_\theta$ is given by $(1,\xi_i,|\xi|^2)\sqrt{M}e^{\theta |\xi|^2} \in L^2_{\xi}$. Then applying the same argument in Proposition 3.1 in \cite{golse}, for $|u|\ll 1$, there exists $\phi_\theta \in L^2_{\xi} \cap L^{\infty,s}_\xi$ for any $s>0$ to the following eigen-value problem
\begin{equation*}
\begin{cases}
    & \mathcal{L}_\theta \phi_\theta = \tau_u (\xi_1+u) \phi_\theta  ,\\
    &  \int_{\mathbb{R}^3} (\xi_1+u) |\phi_\theta|^2   \dd \xi = -u.
\end{cases}
\end{equation*}
Then for some constant $C=C(\theta,u)$, $\phi = e^{-\theta|\xi|^2} \phi_\theta$ is an eigen-function of the following problem
\begin{equation*}
\begin{cases}
    &  \mathcal{L} \phi = \tau_u (\xi_1+u) \phi, \\
    &  \langle (\xi_1+u) |\phi|^2\rangle = C(\theta,u).
\end{cases}
\end{equation*}
Through rescaling, we conclude that the eigen-function $\phi_u$ in \eqref{eigen_prob} satisfies \eqref{eigen_expo}.

\hide

We first prove the a priori estimate. We rearrange \eqref{eigen_prob} into
\begin{align*}
    &    [\nu  - \tau_u(\xi_1+u)] \phi_u  = K\phi_u.    
\end{align*}
We split the cases into $\mathbf{1}_{|\xi_1|\geq N} + \mathbf{1}_{|\xi_1| < N}$. Then for $\tau_u \ll \nu$ we have
\begin{align*}
  \Big\Vert \frac{\nu}{2} w\phi_u  \Big\Vert_{L^\infty_{|\xi|>N}}  &  \leq  \int_{\mathbb{R}^3} \mathbf{k}(\xi,\xi') \frac{w(\xi)}{w(\xi')} w(\xi') \phi_u(\xi') \dd \xi' \leq \frac{\Vert w\phi_u\Vert_{L^\infty_{\xi}}}{N}.
\end{align*}
On the other hand, we have
\begin{align*}
  \Big\Vert \frac{\nu}{2} w \phi_u \Big\Vert_{L^\infty_{|\xi|<N}}  &  \leq C_N \Vert \phi_u\Vert_{L^\infty_{\xi}}.
\end{align*}
Hence we conclude that
\begin{align*}
  \Vert w\phi_u\Vert_{L^\infty_{\xi}}  &  \leq C_N + \frac{1}{N} \Vert w\phi_u\Vert_{L^\infty_{\xi}}.  
\end{align*}

\unhide
    
\end{proof}

In the following lemma, we bound $\psi_u$ in the $w$-weighted $L^\infty$ norm. Here we recall the definition of $\psi_u$ in \eqref{p_g}.
\begin{lemma}\label{Lemma: w_psi_infty}
For the eigen-function $\phi_u$ in \eqref{eigen_prob} in Proposition \ref{prop:eigen_exists}, we further have
\begin{equation}\label{w_psi_bdd}
\Vert w\psi_u \Vert_{L^\infty_\xi} = \big\Vert w \frac{\phi_u - \phi_0}{u} \big\Vert_{L^\infty_\xi} \lesssim 1,
\end{equation}
where the inequality in \eqref{w_psi_bdd} does not depend on $u$.
\end{lemma}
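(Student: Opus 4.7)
The plan is to derive an integral equation for $\psi_u$ from the eigenvalue problem, obtain a uniform $L^2$ bound from the analyticity statement in Proposition \ref{prop:eigen_exists}, and then bootstrap to the $w$-weighted $L^\infty$ bound via a Guo-type splitting argument on the iterated operator $(K/\nu)^2$.

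\textbf{Step 1 (Equation for $\psi_u$).} Since $|\tau_u|\sim u$ we have $\tau_0=0$, so $\phi_0\in\mathrm{Ker}\,\mathcal{L}$. Subtracting \eqref{eigen_prob} at parameter $u$ and at parameter $0$ and dividing by $u$ yields
\[
\mathcal{L}\psi_u \;=\; \frac{\tau_u}{u}\,(\xi_1+u)\phi_u,
\qquad\text{equivalently}\qquad
\psi_u \;=\; \frac{K\psi_u}{\nu(\xi)} \;+\; g_u,
\]
with $g_u:=\frac{\tau_u}{u}\,\frac{(\xi_1+u)\phi_u}{\nu(\xi)}$. The real analyticity of $u\mapsto\tau_u$ with $\tau_0=0$ implies $|\tau_u/u|\lesssim 1$ for $|u|<r/2$. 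Combined with \eqref{nablav nu} giving $|\xi_1+u|/\nu(\xi)\lesssim 1$, and the uniform bound $\|w\phi_u\|_{L^\infty_\xi}\lesssim 1$ from Lemma \ref{lemma:eigen_continuous} (inherited from the uniform bound on $\phi_\theta$ in the construction via $\mathcal{L}_\theta$ and Proposition \ref{prop:eigen_exists}), the forcing satisfies $\|wg_u\|_{L^\infty_\xi}\lesssim 1$ uniformly in $u$.

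\textbf{Step 2 (a priori $L^2$ bound on $\psi_u$).} Proposition \ref{prop:eigen_exists} asserts that $u\mapsto\phi_u$ is real analytic with values in $\mathcal{H}\subset L^2_\xi$, so on $|u|<r$ we have a convergent expansion $\phi_u=\phi_0+u\,\partial_u\phi_u|_{u=0}+O(u^2)$ in $L^2_\xi$. Dividing by $u$ gives a power series for $\psi_u$ which converges in $L^2_\xi$ uniformly for $|u|<r/2$, yielding $\|\psi_u\|_{L^2_\xi}\leq C$ independently of $u$.

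\textbf{Step 3 (Upgrade to $L^\infty_w$).} Iterate the equation once:
\[
\psi_u \;=\; \Big(\frac{K}{\nu}\Big)^{\!2}\psi_u \;+\; \frac{Kg_u}{\nu} \;+\; g_u.
\]
By Step 1 and \eqref{k_theta}, the last two terms are bounded in $L^\infty_w$ uniformly in $u$. For the double-$K$ term, using \eqref{k_theta_bdd},
\[
\Big|w(\xi)\Big(\frac{K}{\nu}\Big)^{\!2}\psi_u(\xi)\Big|
\;\leq\;\frac{1}{\nu(\xi)}\iint \mathbf{k}_\theta(\xi,\xi')\,
\frac{\mathbf{k}_\theta(\xi',\xi'')}{\nu(\xi')}\,\bigl|w(\xi'')\psi_u(\xi'')\bigr|\,\mathrm{d}\xi''\mathrm{d}\xi'.
\]
Introducing a large parameter $N$, split the domain of integration into the ``bad'' region where $|\xi''|>N$, $|\xi-\xi'|\leq 1/N$, or $|\xi'-\xi''|\leq 1/N$, and its complement (the ``good'' region). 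On the bad region the standard kernel estimates (integrability and smallness of $\int\mathbf{k}_\theta$ over small sets) give a contribution bounded by $\frac{C}{N}\|w\psi_u\|_\infty$. On the good region, $\mathbf{k}_\theta(\xi',\xi'')\leq C_N$, and applying Cauchy–Schwarz in $\xi''$ together with Step 2 gives
\[
\int_{|\xi''|\leq N}|w(\xi'')\psi_u(\xi'')|\,\mathrm{d}\xi''
\;\lesssim_N\; \|\psi_u\|_{L^2_\xi}\;\lesssim\;1.
\]
Collecting these estimates,
\[
\|w\psi_u\|_{L^\infty_\xi}\;\leq\;\frac{C}{N}\|w\psi_u\|_{L^\infty_\xi}\;+\;C_N\,\|\psi_u\|_{L^2_\xi}\;+\;C.
\]
Choosing $N$ large and absorbing the first term into the left-hand side concludes $\|w\psi_u\|_{L^\infty_\xi}\leq C$, uniformly in $u$.

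The main obstacle is Step 3: the operator $K/\nu$ is not a contraction on $L^\infty_w$ (it only decays like $1/(1+|\xi|)$), so one truly needs the double iteration and the careful Grad-type splitting with the $w$-weight. Steps 1 and 2 are essentially algebraic consequences of the eigenvalue structure and the analyticity already supplied by Proposition \ref{prop:eigen_exists}.
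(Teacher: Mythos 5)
Your proof is correct and reaches the same conclusion, but the bootstrap in Step~3 is a genuinely different argument from the paper's. Steps~1--2 are essentially a reformulation of what the paper does: you take the difference quotient $\psi_u$ directly and derive $\mathcal{L}\psi_u=\frac{\tau_u}{u}(\xi_1+u)\phi_u$, whereas the paper differentiates the analytic family $\phi_z$ and then invokes Taylor's theorem -- algebraically equivalent. The real divergence is the $L^\infty_w$ upgrade. The paper applies the mapping property $K:L^2_\xi\to L^{\infty,1/2}_\xi$ \emph{once} to get the unweighted bound $\|\dot{\phi}_z\|_{L^\infty_\xi}\lesssim1$, then obtains the exponential weight by rerunning the entire eigenfunction construction with the twisted operator $\mathcal{L}_\theta=\nu-K_\theta$ and using $\dot{\phi}_z=e^{-\theta|\xi|^2}\dot{\phi}_\theta(z)$. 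You instead iterate the fixed-point equation to $(K/\nu)^2$, telescope the exponential weight through the $\mathbf{k}_\theta$ kernel of Lemma~\ref{lemma:k_theta}, and run a Guo-type $N$-splitting that trades a small absorbable multiple of $\|w\psi_u\|_{L^\infty_\xi}$ against $C_N\|\psi_u\|_{L^2_\xi}$. Your route avoids re-deriving the twisted eigenproblem but pays for it with a longer kernel estimate; the paper's route is shorter here precisely because the heavy lifting (construction of $\phi_\theta$) has already been done in Lemma~\ref{lemma:eigen_continuous}.

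Two small points deserve an explicit sentence. First, Proposition~\ref{prop:eigen_exists} states analyticity only for $0<|u|<r$; the Taylor expansion through $u=0$ in your Step~2 needs the removable-singularity extension (or the analytic $z$-parameterization from \cite{golse} that the paper cites), so you should say which you are using. Second, the absorption ``choose $N$ large'' at the end of Step~3 requires $\|w\psi_u\|_{L^\infty_\xi}<\infty$ qualitatively before the bound can be closed; this follows for each fixed $u\neq0$ from Lemma~\ref{lemma:eigen_continuous} applied to $\phi_u$ and $\phi_0$, but should be stated, since it is the whole reason the a~priori estimate is legitimate.
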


\begin{proof}
Following the proof of Proposition 3.1 in \cite{golse}, for $z$ near $0$, there exists $\phi_z$ and $\lambda(z) = u(z)z$ which are analytic in $z$ such that
\begin{align*}
    & T(z)\phi(z):= (\mathcal{L}-z\xi_1) \phi(z) = \lambda (z) \phi(z).
\end{align*}
Denoting $\dot{\phi}_z$ as the derivative with respect to $z$, then we have
\begin{align*}
    &  \mathcal{L} \dot{\phi}_z - z\xi_1 \dot{\phi}_z - \xi_1 \phi_z = \lambda (z) \dot{\phi}_z + \dot{\lambda}(z) \phi_z,
\end{align*}
which is equivalent to
\begin{align*}
    &    [\nu(\xi) - z(\xi_1+u)] \dot{\phi}_z = K \dot{\phi}_z + (\dot{\lambda}(z) + \xi_1) \phi_z.
\end{align*}
Since $\phi_z$ is analytic in $z$ (also see Proposition \ref{prop:eigen_exists}), we have $\Vert \dot{\phi}_z\Vert_{L^2_\xi}\lesssim 1$. Combining with $\Vert [1+\xi]\phi_z \Vert_{L^\infty_\xi} \lesssim 1$ from Proposition \ref{prop:eigen_exists}, and the fact that $K$ is bounded from $L^2_\xi$ to $L^{\infty,1/2}_\xi$, we conclude that
\begin{align}
    & \Vert \dot{\phi}_z\Vert_{L^\infty_\xi} \lesssim \big\Vert \frac{K\dot{\phi}_z}{\nu(\xi)} \big\Vert_{L^\infty_\xi} + \Vert [1+\xi] \phi_z \Vert_{L^\infty_\xi} \lesssim 1. \label{dot_phi_z_bdd}
\end{align}
From the Taylor's theorem, $\frac{\phi_u-\phi_0}{u} = \dot{\phi}_{\bar{u}}$ for some $\bar{u}\in [0,u]$.

To prove \eqref{w_psi_bdd}, we apply the same argument in the proof of Lemma \ref{lemma:eigen_continuous}. With the modified linear operator $\mathcal{L}_\theta$ defined in \eqref{L_1}, we apply the same computation of \eqref{dot_phi_z_bdd} to the eigen-value problem $[\mathcal{L}_\theta-z\xi_1] \phi_\theta(z) = \lambda(z)\phi_\theta(z)$ and deduce that
\begin{align*}
    &   \Vert \dot{\phi}_\theta(z)\Vert_{L^\infty_\xi} \lesssim 1.
\end{align*}
Then we conclude \eqref{w_psi_bdd} from the Taylor's theorem and the fact that $\dot{\phi}(z) = e^{-\theta |\xi|^2} \dot{\phi}_\theta(z) $.

\end{proof}

\ \\

\subsection{Continuity and $w$-weighted $L^\infty$ estimate of the linearized penalized problem}

With $\psi_u$ defined in \eqref{p_g}, we define
\begin{equation*}
\mathbf{p}_u g =  -\langle (\xi_1+u)\psi_u g \rangle \phi_u , \ \ \mathbf{P}_u g = -\langle \psi_u g\rangle (\xi_1+u) \phi_u.
\end{equation*}
Now we define the linearized penalized problem in the following proposition.

\begin{proposition}[Proposition 5.3 and Proposition 5.6 in \cite{golse}]\label{prop:pen_well_pose}
Define the linearized penalized collision operator as
\begin{equation}\label{pen_lin_op}
\mathcal{L}^p g : = \mathcal{L}g + 2\gamma \prod_+((\xi_1+u)g) + 2\gamma \mathbf{p}_u g - \gamma (\xi_1 + u)g.
\end{equation}
Here $\gamma$ is a small constant $\gamma\ll 1$.

Let $Q\in L^2_{x,\xi}$. There exists a unique solution $g$ to the following linearized penalized problem
\begin{equation}\label{pen_prob}
\begin{cases}
     &  (\xi_1+u)\p_x g + \mathcal{L}^p g = Q, \ x>0, \ \xi\in \mathbb{R}^3 \\
     & g(0,\xi) = g_b(\xi), \ \xi_1 + u >0.
\end{cases}
\end{equation}
Moreover, this solution satisfies the $L^2$-estimate
\begin{equation}\label{l2_estimate}
\Vert \nu g\Vert_{L^2_{x,\xi}} \leq C_2\big[\Vert Q\Vert_{L^2_{x,\xi}} + \Vert \nu g_b\Vert_{L^2_{\xi}} \big].
\end{equation}

If we further assume $Q$ and $g_b$ satisfy
\begin{align*}
    & (1+|\xi|)^3  g_b \in L^\infty_{\xi}, \ \ (1+|\xi|)^2  Q \in L^\infty_{x,\xi}, \ \ Q(x,\cdot )\perp \ker \mathcal{L},
\end{align*}
then the solution further satisfies the $L^\infty$-estimate
\begin{align}
    & \Vert (1+|\xi|)^3  g\Vert_{L^\infty_{x,\xi}}  \leq C \big[\Vert (1+|\xi|)^3  g_b\Vert_{L^\infty_{\xi}} + \Vert (1+|\xi|)^2  Q \Vert_{L^\infty_{x,\xi}} + \Vert g\Vert_{L^2_{x,\xi}} \big].  \label{linfty_estimate}
\end{align}

\end{proposition}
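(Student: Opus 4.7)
My plan follows the strategy of Bernhoff--Golse in \cite{golse} and splits into three tasks: coercivity of the penalized bilinear form, the $L^2$ energy estimate together with existence/uniqueness, and the $L^\infty$ bootstrap from $L^2$.

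First I would analyze the bilinear form $g\mapsto \langle \mathcal{L}^p g,g\rangle$. The unpenalized operator $\mathcal{L}$ dissipates only the microscopic part $\ip g$ and is degenerate on its kernel $\mathrm{Span}(X_{\pm},X_0,\xi_2\sqrt{M},\xi_3\sqrt{M})$. Pairing $2\gamma\prod_+((\xi_1+u)g)$ with $g$ yields $2\gamma\langle(\xi_1+u)X_+ g\rangle\langle X_+ g\rangle$, which via $\langle\xi_1 X_+^2\rangle=\sqrt{5/3}$ gives a positive contribution of size $\gamma|\langle X_+ g\rangle|^2$ up to an $O(\gamma|u|)$ correction and controls the otherwise problematic $X_+$ mode. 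Similarly, $2\gamma\mathbf{p}_u g$ contributes $-2\gamma\langle(\xi_1+u)\psi_u g\rangle\langle \phi_u g\rangle$, which together with $\langle \mathcal{L}g,g\rangle$ and the normalization $\langle(\xi_1+u)\phi_u^2\rangle=-u$ from \eqref{eigen_prob} supplies coercivity along the direction of $\phi_0$ (the critical kernel direction as $u\to 0$); the uniform bound on $\psi_u$ from Lemma~\ref{Lemma: w_psi_infty} makes these constants independent of $u$. Finally, $-\gamma\langle(\xi_1+u)|g|^2\rangle$ is absorbed by the transport term after an integration by parts in $x$.

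For \eqref{l2_estimate}, I multiply \eqref{pen_prob} by $g$ and integrate over $x>0$ and $\xi\in\R^3$. The transport term contributes $\tfrac{1}{2}\int(\xi_1+u)|g_b|^2\,\dd\xi$ at $x=0$ and vanishes at $x=\infty$ by the required decay; combining with the coercivity above and absorbing $\langle Q,g\rangle$ via Cauchy--Schwarz (using $\nu\gtrsim 1+|\xi|$) produces the desired bound. Uniqueness follows from the same estimate applied to the difference of two solutions; existence is obtained by the standard approximation (e.g.\ adding an $\e\p_x$ regularization, solving the resulting elliptic problem, and passing to the limit $\e\downarrow 0$ with the aid of \eqref{l2_estimate}). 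For \eqref{linfty_estimate}, I rewrite the equation as $(\xi_1+u)\p_x g + \nu(\xi)g = Kg + \mathcal{R}_\gamma g + Q$, where $\mathcal{R}_\gamma$ collects the finite-rank penalization terms, and integrate along characteristics (from $x=0$ for $\xi_1+u>0$ and from $x=\infty$ for $\xi_1+u<0$). Iterating the Duhamel formula once on the $Kg$ term produces a double velocity integral with kernel $\mathbf{k}(\xi,\xi')\mathbf{k}(\xi',\xi'')$; splitting into the compact region $\{|\xi'|\leq N,\ |\xi-\xi'|\geq 1/N\}$ (bounded by $\|g\|_{L^2_{x,\xi}}$ via Cauchy--Schwarz) and its complement (bounded by $o(1)\|(1+|\xi|)^3 g\|_{L^\infty_{x,\xi}}$ as $N\to\infty$) and absorbing the small term closes \eqref{linfty_estimate}. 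The rank-one term $\mathcal{R}_\gamma g$ is pointwise dominated by $\|g\|_{L^2_{x,\xi}}$ because $\phi_u$ and $\psi_u$ have all polynomial moments finite.

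The main obstacle is the \emph{uniform in} $u$ coercivity of the penalized bilinear form as $u\to 0$, precisely where the naive Milne-type argument degenerates: the eigenmode $\phi_u$ approaches an element of $\ker\mathcal{L}$ and the eigenvalue $\tau_u=O(u)$ vanishes. The delicate point is to show that $\prod_+$ and $\mathbf{p}_u$ supply strictly positive contributions along $X_+$ and $\phi_0$ with constants independent of $u\in(-r,0)\cup(0,r)$; this is the spectral-perturbation content of \cite{golse}. Once this uniform coercivity is secured, the remaining energy and Grad-type arguments are routine.
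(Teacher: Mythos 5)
The paper does not prove this proposition; it is imported verbatim from Bernhoff--Golse (Propositions 5.3 and 5.6 in \cite{golse}) and used as a black box, so there is no ``paper's own proof'' to compare against. Your sketch reconstructs the general strategy that \cite{golse} uses (coercivity of the penalized bilinear form to get $L^2$, then a Grad-type $L^2$-to-$L^\infty$ bootstrap via a twice-iterated Duhamel formula along characteristics), and you correctly identify the crux: the uniform-in-$u$ coercivity of $\mathcal{L}^p$ as $u\to 0$, which you defer to the spectral-perturbation analysis of \cite{golse}. That is fair for a sketch, but you have not supplied a self-contained argument.

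Two places in your outline are glossed over in a way that would not survive being written out. First, the claim that pairing $2\gamma\prod_+((\xi_1+u)g)$ with $g$ ``yields $\gamma|\langle X_+g\rangle|^2$ up to an $O(\gamma|u|)$ correction'' is not automatic: $\langle(\xi_1+u)X_+g\rangle\langle X_+g\rangle$ is a product of two different linear functionals of $g$, not a square, and making it coercive together with the $\mathbf{p}_u g$ term requires exactly the careful spectral bookkeeping (involving $\phi_u$, $\psi_u$, and $\tau_u\sim u$) that you have deferred; as stated, this step is circular. Second, the remark that the term $-\gamma(\xi_1+u)|g|^2$ is ``absorbed by the transport term after an integration by parts in $x$'' does not parse: integration by parts on $\langle(\xi_1+u)\partial_x g,g\rangle$ produces a boundary contribution at $x=0$, not a bulk term that could cancel $-\gamma\iint(\xi_1+u)|g|^2$. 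In \cite{golse} the role of $-\gamma(\xi_1+u)g$ is to trade exponential $x$-decay of the solution for boundedness of the penalized unknown; it does not vanish against the transport term by an integration by parts. Likewise, in the $L^\infty$ bootstrap, the passage from the $\xi'$-integral on the compact region to a bound by $\|g\|_{L^2_{x,\xi}}$ requires a change of variables $\xi_1'\mapsto y=x-(\xi_1+u)(t-s)-(\xi_1'+u)(s-s')$ on the region $s-s'\geq\delta$, which you do not mention; ``Cauchy--Schwarz'' alone does not convert a velocity integral into a spatial $L^2$ norm. Finally, you do not use the hypothesis $Q(x,\cdot)\perp\ker\mathcal{L}$ anywhere, whereas it is an essential structural assumption in \cite{golse} for the $L^2$ well-posedness. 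None of these affects the fact that your high-level route is the right one; they are the steps where the real work lies.
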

\begin{remark}
Here we note that the $L^\infty$ estimate in~\eqref{linfty_estimate} is controlled by the $L^2$ estimate $\Vert g\Vert_{L^2_{x,\xi}}$, and thus it can be further bounded using \eqref{l2_estimate}:
\begin{align}
    & \Vert (1+|\xi|)^3  g\Vert_{L^\infty_{x,\xi}}  \leq C \big[\Vert (1+|\xi|)^3  g_b\Vert_{L^\infty_{\xi}} + \Vert e^{\delta x}(1+|\xi|)^2  Q \Vert_{L^\infty_{x,\xi}}  \big]. \notag
\end{align}

\end{remark}

Denote 
\begin{align}
  -K^p g & = -Kg   + 2\gamma \prod_+((\xi_1+u)g) + 2 \gamma \mathbf{p}_u g, \label{Kp}
\end{align}
so that $\mathcal{L}^p g = [\nu(\xi)-\gamma(\xi_1+u)] g - K^p g$. With the $w$-weighted estimate for the eigen-function problem in Lemma \ref{lemma:eigen_continuous}, we have the following property for $K^p$:
\begin{lemma}\label{lemma:Kp}
\begin{equation}\label{Kp_bdd_l2}
\Vert K^p g\Vert_{L^2_{x,\xi}} \lesssim \Vert g\Vert_{L^2_{x,\xi}}.
\end{equation}

\begin{equation}\label{Kp_bdd}
    \Vert w(\xi)K^p g(\xi) \Vert_{L^\infty_{x,\xi}} \lesssim \Vert wg\Vert_{L^\infty_{x,\xi}}.
\end{equation}
\end{lemma}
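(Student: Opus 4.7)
The statement is a direct consequence of the definition \eqref{Kp}, which splits $K^p g$ into three pieces: the standard $Kg$, the projection $2\gamma \prod_+((\xi_1+u)g)$ onto $X_+$, and the rank-one perturbation $2\gamma \mathbf{p}_u g = -2\gamma \langle (\xi_1+u)\psi_u g\rangle \phi_u$. The plan is to bound each piece separately in both $L^2_{x,\xi}$ and $w$-weighted $L^\infty_{x,\xi}$, using only the estimates already established for $K$, $X_+$, $\phi_u$, and $\psi_u$.

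For the $Kg$ contribution, the $L^2_{x,\xi}$ bound follows from the classical fact that $K$ is bounded on $L^2_\xi$ (via the kernel bound $\mathbf{k}(\xi,\xi')\lesssim \mathbf{k}^\varrho(\xi,\xi')$ in Lemma \ref{lemma:k_gamma} and Schur's test) together with Fubini in $x$; the weighted $L^\infty$ bound is immediate from \eqref{k_theta} in Lemma \ref{lemma:k_theta}, which gives
\begin{equation*}
w(\xi)|Kg(x,\xi)| \leq \Vert wg\Vert_{L^\infty_{x,\xi}}\int_{\R^3} \mathbf{k}(\xi,\xi')\frac{w(\xi)}{w(\xi')}\dd\xi' \lesssim \frac{\Vert wg\Vert_{L^\infty_{x,\xi}}}{1+|\xi|}.
\end{equation*}

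For the projection term, write $\prod_+((\xi_1+u)g)(x,\xi) = \langle (\xi_1+u)g(x,\cdot) X_+\rangle X_+(\xi)$. Since $X_+$ contains the Gaussian factor $\sqrt{M}$, both $\Vert (\xi_1+u)X_+\Vert_{L^2_\xi}$ and $\Vert w X_+\Vert_{L^\infty_\xi}$ are finite (the latter because $\theta<1/4$). Cauchy--Schwarz in $\xi$ then yields the $L^2_{x,\xi}$ bound after integrating in $x$, while for the weighted $L^\infty$ estimate one uses $|\langle (\xi_1+u)g X_+\rangle| \leq \Vert wg\Vert_{L^\infty_{x,\xi}}\int |(\xi_1+u)X_+|w^{-1}\dd\xi$, the last integral being finite by the same Gaussian factor. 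The perturbation $\mathbf{p}_u g$ is handled in exactly the same way, with $\phi_u$ playing the role of $X_+$ and $\psi_u$ playing the role of the dual test function: the key inputs are $\Vert w\phi_u\Vert_{L^\infty_\xi}<\infty$ from Lemma \ref{lemma:eigen_continuous} and the \emph{uniform-in-$u$} bound $\Vert w\psi_u\Vert_{L^\infty_\xi}\lesssim 1$ from Lemma \ref{Lemma: w_psi_infty}; both bounds transfer to finite weighted $L^2_\xi$ norms by a Gaussian integration, so the argument for the projection term applies verbatim.

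There is essentially no obstacle here; the work was already done in the preparatory Lemmas \ref{lemma:eigen_continuous} and \ref{Lemma: w_psi_infty}, where the exponentially weighted control of $\phi_u$ and $\psi_u$ was carefully extracted from Bernhoff--Golse's polynomial-weight estimates. The one point worth emphasizing in the write-up is that the implicit constants in \eqref{Kp_bdd_l2}--\eqref{Kp_bdd} are uniform in $u$ for $0<|u|<r$, which is precisely what the uniformity in \eqref{w_psi_bdd} guarantees for the $\mathbf{p}_u$ piece.
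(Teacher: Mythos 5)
Your proposal follows the same decomposition into $Kg$, the projection $2\gamma\prod_+((\xi_1+u)g)$, and the rank-one term $2\gamma\mathbf{p}_u g$ and invokes the same three inputs as the paper: boundedness of $K$ via the kernel estimate, Gaussian decay of $X_+$, and the exponentially weighted bounds for $\phi_u$ and $\psi_u$ from Lemma \ref{lemma:eigen_continuous} and Lemma \ref{Lemma: w_psi_infty}. Your emphasis on the uniform-in-$u$ constant from \eqref{w_psi_bdd} is exactly the point the paper relies on; this is essentially the paper's proof.
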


\begin{proof}
\textit{Proof of \eqref{Kp_bdd_l2}.} Clearly $\Vert Kf\Vert_{L^2_{x,\xi}} \lesssim \Vert f\Vert_{L^2_{x,\xi}}$. For the rest, the contribution of $\prod_{+}(\xi_1+u)g$ is bounded as
\begin{align*}
  \Vert \chi_+ \langle (\xi_1+u)g,\chi_+\rangle\Vert_{L^2_{x,\xi}}^2   & \lesssim  \iint_{\mathbb{R}^+ \times \mathbb{R}^3} e^{-\frac{|\xi|^2}{4}} \big(\int_{\mathbb{R}^3} |g(x,\xi')|^2 \dd \xi'\big) \big(\int_{\mathbb{R}^3} (\xi_1'+u)^2 e^{-\frac{|\xi'|^2}{4}} \dd \xi' \big)   \dd x \dd \xi \\
    & \lesssim  \iint_{\mathbb{R}^+ \times \mathbb{R}^3}  |g(x,\xi')|^2 \dd x \dd \xi' = \Vert g\Vert^2_{L^2_{x,\xi}}   .
\end{align*}
The contribution of $\mathbf{p}_u g$ is bounded as
\begin{align*}
    &  \Vert \phi_u \langle (\xi_1+u)\psi_u g\rangle\Vert_{L^2_{x,\xi}}^2 \notag \\
    &\lesssim \iint_{\mathbb{R}^+ \times \mathbb{R}^3} \Vert w\phi_u\Vert_{L^\infty_\xi}^2 w^{-2}(\xi) \big(\int_{\mathbb{R}^3} |g(x,\xi')|^2 \dd \xi'\big)  \Vert w\psi_u \Vert_{L^\infty_{x,\xi}}^2 \big(\int_{\mathbb{R}^3} (\xi'_1+u)^2 w^{-2}(\xi') \dd \xi'\big)   \dd x \dd \xi \\
    & \lesssim \iint_{\mathbb{R}^+ \times \mathbb{R}^3}  |g(x,\xi')|^2  \dd x \dd \xi'  \Vert g\Vert_{L^2_{x,\xi}}^2.
\end{align*}
Here we have applied H\"older inequality to $\langle (\xi_1+u)\psi_u g \rangle$ and used Lemma \ref{lemma:eigen_continuous} and Lemma \ref{Lemma: w_psi_infty}.

\textit{Proof of \eqref{Kp_bdd}.}
The contribution of $K$ in $K^p$ can be controlled using Lemma \ref{lemma:k_theta}:
\begin{align*}
    &\Vert wKg\Vert_{L^\infty_{x,\xi}} \leq \Vert wg\Vert_{L^\infty_{x,\xi}} \int_{\mathbb{R}^3} \mathbf{k}(\xi,\xi') \frac{w(\xi)}{w(\xi')} \dd \xi' \lesssim \Vert wg\Vert_{L^\infty_{x,\xi}}.
\end{align*}

The contribution of $\prod_{+}(\xi_1+u)g$ is bounded as
\begin{align}
    &   w(\xi)\chi_+ \langle (\xi_1+u)g,\chi_+\rangle \lesssim \Vert wg\Vert_{L^\infty_{x,\xi}} \langle (\xi_1+u)e^{-\theta |\xi|^2},\chi_+\rangle \lesssim \Vert wg\Vert_{L^\infty_{x,\xi}}. \label{prod_bdd}
\end{align}

The contribution of $\mathbf{p}_u g$ can be bounded using Proposition \ref{prop:eigen_exists} and Lemma \ref{lemma:eigen_continuous}:
\begin{align}
    &  |w(\xi)\phi_u(\xi) \langle (\xi_1+u)\psi_u g \rangle| \leq \Vert w\phi_u\Vert_{L^\infty_{x,\xi}}  \Vert wg\Vert_{L^\infty_{x,\xi}} |\langle (\xi_1+u) \psi_u e^{-\theta |\xi|^2}\rangle|  \notag\\
    & \lesssim \Vert w\phi_u\Vert_{L^\infty_{x,\xi}}  \Vert wg\Vert_{L^\infty_{x,\xi}}  \Vert \psi_u \Vert_{L^2_\xi}\lesssim   \Vert wg\Vert_{L^\infty_{x,\xi}}. \label{p_g_bdd}
\end{align}
Then we conclude the lemma.

\end{proof}

In the following proposition, we improve the result in Proposition \ref{prop:pen_well_pose} by establishing the continuity and $w$-weighted $L^\infty$ estimate.

\begin{proposition}\label{prop:pen_continuous}
Suppose all conditions in Proposition \ref{prop:pen_well_pose} are satisfied. We assume $g_b(\xi)\in C(\mathbb{R}^3)$, and $Q$ is continuous away from $\mathcal{D}$ defined in \eqref{grazing}, then the unique solution $g$ of~\eqref{pen_prob} is continuous away from $\mathcal{D}$. If we further assume $\Vert wg_b\Vert_{L^\infty_{\xi}}<\infty$ and $\big\Vert \frac{w}{[1+|\xi|]}Q \big\Vert_{L^\infty_{x,\xi}}<\infty$, the solution in Proposition \ref{prop:pen_well_pose} satisfies
\begin{equation}\label{weighted_lfty}
\Vert wg\Vert_{L^\infty_{x,\xi}} \leq C\big[ \Vert wg_b\Vert_{L^\infty_{\xi}} + \big\Vert \frac{w}{[1+|\xi|]}Q \big\Vert_{L^\infty_{x,\xi}} + \Vert g\Vert_{L^2_{x,\xi}} \big].
\end{equation}

\end{proposition}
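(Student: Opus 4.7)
The plan is to work with Duhamel's formula along the characteristics of the penalized transport operator $(\xi_1+u)\partial_x + [\nu(\xi) - \gamma(\xi_1+u)]$ and to establish the weighted $L^\infty$ bound first, then deduce continuity by a successive-approximation scheme. Writing $\tilde{\nu}(\xi) := \nu(\xi) - \gamma(\xi_1+u)$ and using $\mathcal{L}^p g = \tilde{\nu}(\xi)g - K^p g$ from \eqref{pen_lin_op} and \eqref{Kp}, Duhamel's formula for \eqref{pen_prob} reads
\begin{equation*}
g(x,\xi) = \mathcal{B}(x,\xi) + \int_{\mathcal{I}(x,\xi)} \frac{e^{-\tilde{\nu}(\xi)|x-x'|/|\xi_1+u|}}{|\xi_1+u|}\,(K^p g + Q)(x',\xi)\,\mathrm{d}x',
\end{equation*}
where $\mathcal{B}$ is the boundary contribution (present only when $\xi_1+u>0$), and $\mathcal{I}(x,\xi)=[0,x]$ for $\xi_1+u>0$, $\mathcal{I}(x,\xi)=[x,\infty)$ for $\xi_1+u<0$; the latter is well-defined thanks to $g\in L^2_{x,\xi}$ from Proposition \ref{prop:pen_well_pose}.

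For \eqref{weighted_lfty}, I would multiply through by $w(\xi)$. Because $\tilde{\nu}(\xi)\gtrsim 1+|\xi|$ for $\gamma\ll 1$ by \eqref{frequency}, the boundary piece is controlled by $\|wg_b\|_{L^\infty_\xi}$ and the forcing piece by $\|wQ/(1+|\xi|)\|_{L^\infty_{x,\xi}}$. The delicate contribution is the $K^p g$ integral, for which Lemma \ref{lemma:Kp} gives $|w(\xi)K^p g(\xi)|\lesssim\|wg\|_{L^\infty}$ via an integral-operator bound involving a Grad-type kernel together with rank-one pieces associated with $\prod_+$ and $\phi_u$. I would then iterate Duhamel's formula once more to substitute $g(x',\xi')$ inside $K^p g$, producing a four-fold integral containing $g(x'',\xi'')$. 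Following the Guo-type splitting, I partition the intermediate velocity variables into three regions: (i) $|\xi'|+|\xi''|$ large, where Gaussian decay of the effective kernel together with the Gaussian decay of $\phi_u,\psi_u$ from Lemma \ref{lemma:eigen_continuous} and Lemma \ref{Lemma: w_psi_infty} yields an $o(1)\|wg\|_{L^\infty_{x,\xi}}$ contribution; (ii) $|\xi-\xi'|$ small, which again gives $o(1)\|wg\|_{L^\infty_{x,\xi}}$ by the $L^1$ mass of the $\mathbf{k}$-kernel near its singularity; (iii) the remaining bounded region in $(\xi',\xi'')$, where the two Duhamel exponentials combined with Cauchy--Schwarz in $x'$ (using $\tilde{\nu}(\xi')/|\xi_1'+u|\gtrsim 1/(1+|\xi'|)$ on the compact velocity set) yield control by $\|g\|_{L^2_{x,\xi}}$. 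The small contributions from (i)--(ii) absorb on the left, producing \eqref{weighted_lfty}.

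For continuity I argue by successive approximation. Set $g^0\equiv 0$ and define $g^{n+1}$ by the Duhamel formula with $g^n$ in place of $g$ inside $K^p$. The $L^\infty$ bound above, applied iteratively, keeps $\{g^n\}$ uniformly bounded in $L^\infty_w$. For $(x,\xi)\in\mathbb{R}_+\times\mathbb{R}^3\setminus\mathcal{D}$, i.e.\ either $x>0$ or $\xi_1+u\neq 0$, the boundary piece $\mathcal{B}$ is continuous (with $\mathcal{B}(x,\xi)\to 0$ as $\xi_1+u\to 0^+$ for $x>0$, by the exponential factor), while after the substitution $y=(x-x')/(\xi_1+u)$ the remaining pieces become $\int_0^{x/(\xi_1+u)} e^{-\tilde{\nu}(\xi)y}(K^p g^n+Q)(x-(\xi_1+u)y,\xi)\,\mathrm{d}y$, which is continuous on $\mathbb{R}_+\times\mathbb{R}^3\setminus\mathcal{D}$ by dominated convergence, using continuity of $Q$ away from $\mathcal{D}$, continuity of $\phi_u$ from Lemma \ref{lemma:eigen_continuous}, and inductively the continuity of $g^n$ together with continuity in $\xi$ of the kernel of $K^p$. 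The doubly iterated Duhamel map is a contraction in $L^\infty_w$ (after the absorption step above), so $g^n$ converges uniformly to a continuous limit; by the uniqueness in Proposition \ref{prop:pen_well_pose}, this limit equals $g$, giving $g\in C(\mathbb{R}_+\times\mathbb{R}^3\setminus\mathcal{D})$.

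The main obstacle is the absorption step in the $L^\infty$ bound: unlike in a bounded strictly convex domain, the half-line is unbounded and characteristic travel times can be arbitrarily long, so smallness cannot come from geometry and must be extracted purely from velocity-space integrability of the effective kernel of $K^p$. A related subtlety is that the projection pieces $\prod_+((\xi_1+u)g)$ and $\mathbf{p}_u g$ inside $K^p$ lack the local kernel structure of $K$; here Lemma \ref{lemma:Kp} is essential, as it reduces all three pieces to bounds analogous to those for $Kg$, using the Gaussian decay of $\phi_u$ and $\psi_u$ from Lemma \ref{lemma:eigen_continuous} and Lemma \ref{Lemma: w_psi_infty}.
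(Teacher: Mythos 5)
Your weighted $L^\infty$ bound follows a genuinely different route from the paper's. The paper defines $G(\xi):=\|g(\cdot,\xi)\|_{L^\infty_x}$, takes $\sup$ in $x$ through the Duhamel formula to get a velocity-only inequality $G(\xi)\lesssim |g_b(\xi)|+K^pG(\xi)/\bar\nu(\xi)+\|Q(\cdot,\xi)\|_{L^\infty_x}/\bar\nu(\xi)$, splits $wKG$ into large/small velocity regions, and controls the bounded region by $\|G\|_{L^2_\xi}$, which it then bounds by invoking the $L^2(\dd\xi;L^\infty_x)$ estimate (5.19) of Bernhoff--Golse. You instead double-iterate Duhamel in $(x,\xi)$ and close the bounded region by Cauchy--Schwarz in the innermost spatial variable, landing directly on $\|g\|_{L^2_{x,\xi}}$. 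This is a self-contained alternative that bypasses (5.19). One caveat: after that Cauchy--Schwarz the inner Duhamel kernel has $L^2_{x''}$ norm $\sim(\bar\nu(\xi')|\xi_1'+u|)^{-1/2}$, so a grazing singularity $|\xi_1'+u|^{-1/2}$ appears and must be absorbed by the $\dd\xi_1'$ integration on the bounded set (it is integrable since $1/2<1$); your inequality $\bar\nu(\xi')/|\xi_1'+u|\gtrsim 1/(1+|\xi'|)$ goes the wrong way and does not address this. Granting that repair, the a-priori $L^\infty$ estimate is sound \emph{provided} $\|wg\|_{L^\infty_{x,\xi}}<\infty$ a priori, so the $o(1)$ term can be absorbed.

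That proviso is precisely where your construction step fails. You set $g^0=0$, put $K^pg^n$ as a frozen source to define $g^{n+1}$, and claim the twice-iterated map is a contraction in $L^\infty_w$. The absorption argument applied to a difference gives
$\|wT^2(g_1-g_2)\|_{L^\infty_{x,\xi}}\lesssim o(1)\|w(g_1-g_2)\|_{L^\infty_{x,\xi}}+C_N\|g_1-g_2\|_{L^2_{x,\xi}}$,
and on the half-line $\|g\|_{L^2_{x,\xi}}$ is \emph{not} controlled by $\|wg\|_{L^\infty_{x,\xi}}$. Nor do the iterates satisfy a uniform $L^2$ bound: since $g^{n+1}$ solves a damped transport equation with $K^p g^n$ on the right, the energy estimate gives $\|g^{n+1}\|_{L^2_{x,\xi}}\lesssim\|K^pg^n\|_{L^2_{x,\xi}}+\cdots$ with a constant that is not small (the coercivity of $\mathcal{L}^p$ in \eqref{l2_estimate} requires $K^p$ to act on the same iterate), so $\{g^n\}$ need not be uniformly bounded and the claimed contraction does not hold. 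This is not a cosmetic point: the same obstruction means you have not shown $\|wg\|_{L^\infty_{x,\xi}}<\infty$ for the actual solution, which you need before the $o(1)$ absorption in your a-priori bound is legitimate.

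The paper sidesteps this by replacing the naive Picard iteration with a $\lambda$-continuation in the auxiliary family \eqref{lambda_eqn}, working in the Banach space $\mathcal{X}$ with norm $\|wg\|_{L^\infty_{x,\xi}}+\|g\|_{L^2_{x,\xi}}$ (and continuity built in). The map $T_\lambda g=\mathcal{L}_0^{-1}(\lambda K^pg+Q)$ has Lipschitz constant $C\lambda$ on $\mathcal{X}$ because the source carries an explicit factor $\lambda$; choosing $\lambda_*$ with $C\lambda_*\le1/2$ gives a contraction, and since the a-priori estimate is uniform in $\lambda$ one bootstraps from $\lambda=0$ to $\lambda=1$ in finitely many steps of size $\lambda_*$. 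Without this (or an equivalent device producing the solution directly in the exponentially weighted class), your continuity and $L^\infty_w$-boundedness claims do not close.
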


\begin{remark}
By the $L^2$ bound \eqref{l2_estimate}, for some $\delta\ll 1$, \eqref{weighted_lfty} can be further bounded as
\begin{equation}\label{weighed_infty}
\Vert wg\Vert_{L^\infty_{x,\xi}} \leq C\big[\Vert wg_b\Vert_{L^\infty_\xi} + \big\Vert e^{\delta x} \frac{w}{[1+|\xi|]} Q\big\Vert_{L^\infty_{x,\xi}} \big].
\end{equation}
\end{remark}

\begin{proof}
We consider a variant of~\eqref{pen_prob}:
\begin{equation}\label{lambda_eqn}
\begin{cases}
    &(\xi_1+u)\p_x g + [\nu(\xi)-\gamma(\xi_1+u)] g - \lambda K^p g = Q \\
    &g(0,\xi) = g_b(\xi), \ \xi_1+u>0.
\end{cases}
\end{equation}
It is straightforward to apply the same argument in \cite{golse} to show that \eqref{lambda_eqn} is well-posed and satisfies both estimates \eqref{l2_estimate} and \eqref{linfty_estimate} uniformly in $\lambda$. To obtain the $L^\infty_\xi$ estimate with exponential weight $w(\xi)$, we will prove the a-priori estimate in Step 1, and in Step 2, we will use fixed point argument to justify that the solution $g$ indeed satisfies \eqref{weighted_lfty} and is continuous away from $\mathcal{D}$.

\textit{Step 1. A-priori estimate.} In this step we prove the following statement: suppose the solution to \eqref{lambda_eqn} satisfies $\Vert wg\Vert_{L^\infty_{x,\xi}}<\infty$, then we have
\begin{equation}\label{aprior_wg}
\Vert wg\Vert_{L^\infty_{x,\xi}} \lesssim \Vert wg_b\Vert_{L^\infty_\xi} + \big\Vert \frac{w}{[1+|\xi|]}Q \big\Vert_{L^\infty_{x,\xi}} + \Vert g\Vert_{L^2_{x,\xi}} .
\end{equation}
Here the inequality does not depend on $\lambda$.

We denote $G(\xi): = \Vert g(x,\xi)\Vert_{L^\infty_x}$. Applying the Duhamel's principle and taking $\sup$ in $x$, we have
\begin{align}
  G(\xi)  & \leq |g_b(\xi)| + \frac{|K^p G(\xi)|}{\nu(\xi)-\gamma |\xi_1+u|} + \frac{\Vert Q(x,\xi)\Vert_{L^\infty_x}}{\nu(\xi)-\gamma |\xi_1+u|}. \label{G_eqn}
\end{align}
To estimate $wG(\xi)$, we first estimate $w|K^p G|$. We apply Lemma \ref{lemma:k_theta} to compute the contribution of $K$ in $K^p$ as
\begin{align}
    &w(\xi) K G(\xi) = w(\xi) \int_{\mathbb{R}^3} \mathbf{k}(\xi,\xi') G(\xi') \dd \xi' \notag\\
    & = \int_{|\xi'|>N \text{ or }|\xi'-\xi|<\frac{1}{N}} \mathbf{k}_\theta(\xi,\xi') w(\xi')G(\xi') \dd \xi' + \int_{|\xi'|\leq N \text{ and }|\xi'-\xi| \geq \frac{1}{N}} \mathbf{k}_\theta(\xi,\xi') w(\xi')G(\xi') \dd \xi'  \notag\\
    & \leq o(1) \Vert wG\Vert_{L^\infty_{\xi}} + \int_{|\xi'|\leq N} G(\xi') \dd \xi' \lesssim o(1) \Vert wG\Vert_{L^\infty_{\xi}} + C_N \Vert G\Vert_{L^2_{\xi}}. \label{G_K}
\end{align}
In the last line, in the first inequality, we used $\mathbf{k}_\theta(\xi,\xi') w(\xi') \lesssim_N 1$ when $|\xi'|\leq N, \ |\xi'-\xi|\geq \frac{1}{N}$; when $|\xi'-\xi|<\frac{1}{N}$, we directly have $\int_{|\xi'-\xi|<\frac{1}{N}} \mathbf{k}_\theta(\xi,\xi')\dd \xi' \lesssim o(1)$; when $|\xi'|>N$, we further split the case into $|\xi|<\frac{N}{2}$ and $|\xi|>\frac{N}{2}$, for the first case we have $|\xi-\xi'|>\frac{N}{2}$, and thus $\int_{|\xi'-\xi|>\frac{N}{2}} \mathbf{k}_\theta(\xi,\xi')\dd \xi' \lesssim o(1)$; for the second case we have $\mathbf{1}_{|\xi|>\frac{N}{2}}\int_{\mathbb{R}^3} \mathbf{k}_\theta(\xi,\xi')\dd \xi' \lesssim o(1)$ from \eqref{k_theta}. In the last inequality, we have used the H\"older inequality in the bounded space $|\xi'|\leq N$.

The contribution of $\gamma \prod_+ ((\xi_1+u)g)$ and $\gamma\mathbf{p}_u g$ can be controlled using \eqref{prod_bdd} and \eqref{p_g_bdd}, with the extra constant $\gamma \ll 1$:
\begin{align}
    & \gamma  \Vert   wG\Vert_{L^\infty_\xi}  \leq o(1)\Vert wG\Vert_{L^\infty_\xi}. \label{G_lambda}
\end{align}

Combining \eqref{G_eqn}, \eqref{G_K} and \eqref{G_lambda}, we conclude that
\begin{align}
  \Vert wG\Vert_{L^\infty_\xi}  &  \lesssim \Vert wg_b\Vert_{L^\infty_\xi}+\Vert G\Vert_{L_\xi^2} + \big\Vert \frac{w}{[1+|\xi|]} Q \big\Vert_{L^\infty_{x,\xi}}   . \label{wg_first}
\end{align}

The $L^2(\dd \xi;L^\infty_x)$ estimate $\Vert G\Vert_{L^2_\xi}$ follows from (5.19) in \cite{golse}:
\begin{align*}
    & \Vert G\Vert_{L^2_\xi} \leq 2\Vert g_b\Vert_{L^2_\xi} + C\Vert Q\Vert_{L^2(\dd \xi;L^\infty_x)} + \Vert g\Vert_{L^2_{x,\xi}},
\end{align*}
where we can further control the RHS as
\begin{align}
    & \Vert G\Vert_{L^2_\xi} \lesssim  \Vert w g_b\Vert_{L^\infty_\xi} + \big\Vert \frac{w}{[1+|\xi|]} Q \big\Vert_{L^\infty_{x,\xi}} + \Vert g\Vert_{L^2_{x,\xi}}  . \label{l2linfty}
\end{align}

Combining \eqref{l2linfty} and \eqref{wg_first}, we conclude \eqref{aprior_wg}.

\textit{Step 2. Fixed point argument.}

For fixed boundary data $g_b(\xi)$, we denote $\mathcal{L}_\lambda^{-1}$ to be the solution operator associated with~\eqref{lambda_eqn}, i.e, the solution to \eqref{lambda_eqn} is $g = \mathcal{L}_{\lambda}^{-1}(Q)$.

We define a Banach space as
\begin{equation}\label{banach}
\mathcal{X}:= \Big\{ g(x,\xi): \Vert w  g \Vert_{L^\infty_{x,\xi}}<\infty, \ \Vert g\Vert_{L^2_{x,\xi}}<\infty, \ \ g\in C(\mathbb{R}^+ \times \mathbb{R}^3\backslash \mathcal{D})    \Big\},
\end{equation}
with the associated norm
\begin{equation*}
\Vert g\Vert_{\mathcal{X}} : = \Vert w  g \Vert_{\lfty} + \Vert g\Vert_{L^2_{x,\xi}}.
\end{equation*}

We start from $\lambda = 0$ and define an operator as
\begin{equation*}
T_\lambda g = \mathcal{L}_0^{-1}(\lambda K^p g + Q).
\end{equation*}

When $g\in \mathcal{X}$, from the assumption, we have that both $g,Q\in C(\mathbb{R}^+ \times \mathbb{R}^3 \backslash \mathcal{D})$. The continuity of $K^p g$ follows from the fact that $Kg$ is continuous and $\phi_u,\chi_+$ are continuous functions from Lemma \ref{lemma:eigen_continuous}. Since $g_b\in C(\mathbb{R}^3)$, without the contribution of $K^p$ acting on $T_\lambda g$, we conclude that $T_\lambda g= \mathcal{L}_0^{-1}(\lambda K^p g + Q) \in C(\mathbb{R}^+ \times \mathbb{R}^3 \backslash \mathcal{D})$.

Again, without the contribution of $K^p T_\lambda g$, $\Vert w T_\lambda g\Vert_{L^\infty_{x,\xi}}$ can be directly controlled by \eqref{wg_first}, which is bounded. Thus we can apply the a-priori estimate \eqref{aprior_wg} with the $L^2$ estimate \eqref{l2_estimate} to have
\begin{align}
 &\Vert T_\lambda g\Vert_{L^2_{x,\xi}} +  \Vert wT_\lambda g\Vert_{L^\infty_{x,\xi}} \notag \\
 & \lesssim  \Vert wg_b\Vert_{L^\infty_\xi} + \Vert  \lambda K^p g + Q \Vert_{L^2_{x,\xi}} + \big\Vert \frac{w}{[1+|\xi|]}Q \big\Vert_{L^\infty_{x,\xi}}  + \lambda \Vert w K^p g \Vert_{L^\infty_{x,\xi}} + \Vert T_\lambda g\Vert_{L^2_{x,\xi}} \notag\\
   &  \lesssim \Vert wg_b\Vert_{L^\infty_\xi} + \big\Vert \frac{w}{[1+|\xi|]}Q \big\Vert_{L^\infty_{x,\xi}} + \lambda\Vert g\Vert_{L^2_{x,\xi}} + \lambda \Vert w g \Vert_{L^\infty_{x,\xi}} + \Vert Q\Vert_{L^2_{x,\xi}} .    \label{chi_bdd}
\end{align}
Here we have used Lemma \ref{lemma:Kp}. From the assumption that $\Vert Q\Vert_{L^2_{x,\xi}}, \Vert \frac{w}{[1+|\xi|]} Q \Vert_{L^\infty_{x,\xi}},\Vert wg_b\Vert_{L^\infty_\xi},\Vert g\Vert_{\mathcal{X}}<\infty$, we conclude that $T_\lambda g \in \mathcal{X}.$

Given $g_1,g_2\in \mathcal{X}$, then $g_T:=T_\lambda (g_1-g_2)$ satisfies
\begin{equation*}
\begin{cases}
    & (\xi_1 + u) \p_x g_T + [\nu(\xi) - \gamma(\xi_1+u)]g_T = -\lambda K^p (g_1-g_2)   \\
    & g_T(0,\xi) = 0, \ \xi_1+u>0.
\end{cases}
\end{equation*}

Again we apply the a-priori estimate \eqref{aprior_wg}. With  \eqref{l2_estimate} and Lemma \ref{lemma:Kp}, we have
\begin{align}
  \Vert g_T\Vert_{\mathcal{X}}  &  = \Vert w  g_T\Vert_{\lfty} + \Vert  g_T \Vert_{L^2_{x,\xi}}  \notag \\
  & \lesssim  \lambda \Vert K^p(g_1-g_2)\Vert_{L^2_{x,\xi}} + \lambda \Vert w  K^p(g_1-g_2) \Vert_{\lfty} + \Vert g_T\Vert_{L^2_{x,\xi}}  \notag\\
  & \lesssim \lambda \Vert g_1-g_2\Vert_{L^2_{x,\xi}}+ \lambda \Vert w(g_1-g_2) \Vert_{\lfty} + \lambda  \Vert K^p(g_1-g_2)\Vert_{L^2_{x,\xi}}  \notag\\
  & \leq C\lambda \Vert g_1-g_2\Vert_{\mathcal{X}}. \label{contraction_X}
\end{align}
Then we choose $\lambda_* = \lambda_*(C)$ to be small enough such that $\lambda_* C \leq \frac{1}{2}$, then for $0\leq \lambda\leq \lambda_*$, from the Banach fixed point theorem we conclude that $T_\lambda$ has a fixed point, i.e, there exists $g\in \mathcal{X}$ such that $T_\lambda g=g$, which is equivalent to $g = \mathcal{L}_0^{-1}(\lambda K g + Q) $, and 
\begin{align*}
    & (\xi_1+u)\p_x g + [\nu(\xi)-\gamma(\xi_1+u)] g  - \lambda K^p g = Q.
\end{align*}
Therefore, the solution to~\eqref{lambda_eqn} is continuous away from $\mathcal{D}$ and satisfies $\Vert g\Vert_{\mathcal{X}}<\infty$ for $0\leq \lambda \leq \lambda_*$.

Next we define
\begin{equation*}
T_{\lambda_*+\lambda} g= \mathcal{L}^{-1}_{\lambda_*}(\lambda K g + Q).
\end{equation*}
Since the estimates \eqref{l2_estimate} and \eqref{aprior_wg} are uniform in $\lambda$, we can apply a similar fixed-point argument in~\eqref{contraction_X} to conclude that there exists a fixed point $T_{\lambda+\lambda_*} g= g$ for $g\in \mathcal{X}$. Step by step, we can conclude that $\mathcal{L}_1^{-1}(Q)$, the solution to~\eqref{pen_prob}, is in $\mathcal{X}$ and thus is continuous away from $\mathcal{D}$.

\end{proof}

\ \\

\subsection{Continuity and $w$-weighted $L^\infty$ estimate of the nonlinear penalized problem}

With the continuity and $w$-weighted $L^\infty_{x,\xi}$ estimate of the eigenfunction and the linearized penalized problem in Lemma \ref{lemma:eigen_continuous} and Proposition \ref{prop:pen_continuous}, we are ready to prove the continuity and $w$-weighted $L^\infty_{x,\xi}$ estimate of the nonlinear penalized problem. The nonlinear problem is given by
\begin{equation}\label{pen_nonlin_prob}
\begin{cases}
    &(\xi_1+u)\p_x g + \mathcal{L}^p g = e^{-\gamma x} (\mathbf{I}-\mathbf{P}_u)\Gamma(g-h\phi_u, g-h\phi_u)  , \\
    &h(x) = -e^{-\gamma x} \int_0^\infty e^{(\tau_u - 2\gamma)z} \langle \psi_u \Gamma(g-h\phi_u,g-h\phi_u)\rangle(x+z) \dd z,   \\
    & g(0,\xi) = f_b(\xi) + h(0)\phi_u(\xi), \ \ \xi_1+u>0.
\end{cases}
\end{equation}

The well-posedness of the nonlinear problem is already established in \cite{golse}:

\begin{proposition}[Proposition 6.1 in \cite{golse}]\label{prop:nonlinear_wellpose}
Suppose the boundary data $f_b(\xi)$ satisfy \eqref{bdr_small}. There exists a unique solution $(g,h)$ to \eqref{pen_nonlin_prob} such that
\[g(x,\mathcal{R}\xi) = g(x,\xi)\]
and for $\e \ll 1$,
\[\Vert (1+|\xi|)^3 g\Vert_{L^\infty_{x,\xi}} + \Vert h\Vert_{L^\infty_x}\leq C\e.\]

\end{proposition}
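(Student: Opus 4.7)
The plan is to construct $(g,h)$ by a contraction mapping argument in a Banach space tailored to the polynomial weight $(1+|\xi|)^3$ and exponential decay $e^{-\gamma x}$, using Proposition \ref{prop:pen_well_pose} as the linear building block and the bilinear estimate \eqref{Gamma bounded} to handle the quadratic nonlinearity.

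First, I would introduce the Banach space
\begin{equation*}
\mathcal{Y} := \Bigl\{ (g,h) : \bigl\| (1+|\xi|)^3 e^{\gamma x} g \bigr\|_{L^\infty_{x,\xi}} + \bigl\| e^{\gamma x} h \bigr\|_{L^\infty_x} \leq C_0 \e,\ g\circ \mathcal{R} = g \Bigr\},
\end{equation*}
with norm $\|(g,h)\|_{\mathcal{Y}}$ equal to the left-hand side. For $(g,h) \in \mathcal{Y}$, I would define the source $Q[g,h] := e^{-\gamma x}(\mathbf{I}-\mathbf{P}_u)\Gamma(g-h\phi_u,g-h\phi_u)$ and let $\mathcal{T}(g,h) := (\tilde g,\tilde h)$, where $\tilde g$ is the unique solution to the linearized penalized problem \eqref{pen_prob} with source $Q[g,h]$ and boundary data $f_b + h(0)\phi_u$, and $\tilde h$ is given by the integral formula in \eqref{pen_nonlin_prob} with $(g,h)$ inserted on the right.

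Second, I would verify $\mathcal{T}(\mathcal{Y}) \subset \mathcal{Y}$. The projection $(\mathbf{I}-\mathbf{P}_u)$ ensures the required orthogonality $Q(x,\cdot) \perp \ker \mathcal{L}$ demanded by Proposition \ref{prop:pen_well_pose}. The bilinear estimate \eqref{Gamma bounded} (in its polynomial-weight version), together with the uniform bound \eqref{phi_u_linfty} on $\phi_u$ and $\Vert w\psi_u\Vert_{L^\infty_\xi}\lesssim 1$ from Lemma \ref{Lemma: w_psi_infty}, give
\begin{equation*}
\bigl\| (1+|\xi|)^2 Q[g,h] \bigr\|_{L^\infty_{x,\xi}} \lesssim e^{-\gamma x} \bigl( \|(1+|\xi|)^3 g\|_{L^\infty_{x,\xi}} + \|h\|_{L^\infty_x}\bigr)^2 \lesssim e^{-2\gamma x} C_0^2 \e^2,
\end{equation*}
and similarly for the integrand defining $\tilde h$, where the exponential factor $e^{(\tau_u-2\gamma)z}$ is integrable in $z$ since $|\tau_u|\ll \gamma$. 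Applying Proposition \ref{prop:pen_well_pose} (combined with the exponential-decay improvement indicated in the remark after it) yields
\begin{equation*}
\|(1+|\xi|)^3 e^{\gamma x} \tilde g\|_{L^\infty_{x,\xi}} + \|e^{\gamma x} \tilde h\|_{L^\infty_x} \lesssim \e + C_0^2 \e^2,
\end{equation*}
so choosing $\e$ small enough (relative to $C_0$ and the implicit constants) keeps the image in $\mathcal{Y}$. The symmetry $\tilde g\circ \mathcal{R} = \tilde g$ propagates from $f_b\circ\mathcal{R}=f_b$ and the invariance of the linearized problem and of $\Gamma$ under $\mathcal{R}$.

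Third, for contraction, I would take $(g_1,h_1),(g_2,h_2)\in\mathcal{Y}$ and use the bilinearity identity
\begin{equation*}
\Gamma(a_1,a_1) - \Gamma(a_2,a_2) = \Gamma(a_1-a_2,a_1) + \Gamma(a_2,a_1-a_2),
\end{equation*}
applied with $a_i = g_i - h_i \phi_u$. Because each factor is of size $\e$ in the polynomial-weight norm, \eqref{Gamma bounded} yields
\begin{equation*}
\|(1+|\xi|)^2 (Q[g_1,h_1]-Q[g_2,h_2])\|_{L^\infty_{x,\xi}} \lesssim \e\, \|(g_1,h_1)-(g_2,h_2)\|_{\mathcal{Y}},
\end{equation*}
and the analogous bound holds for $\tilde h_1 - \tilde h_2$. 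Feeding this through Proposition \ref{prop:pen_well_pose} gives $\|\mathcal{T}(g_1,h_1)-\mathcal{T}(g_2,h_2)\|_{\mathcal{Y}} \leq C\e \|(g_1,h_1)-(g_2,h_2)\|_{\mathcal{Y}}$, which is a strict contraction for $\e$ small. The fixed point is the desired solution.

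The main obstacle is managing the self-consistent coupling between $g$ and $h$: the boundary value of $\tilde g$ depends on $h(0)$, while $\tilde h$ is a nonlocal functional of $(g,h)$ through an integral over $[0,\infty)$. The key observation that makes this tractable is that the coupling terms $h(0)\phi_u$ and $\langle \psi_u \Gamma(\cdot,\cdot)\rangle$ are both controlled by $\|(g,h)\|_{\mathcal{Y}}$ with small coefficient coming either from $\e$ (nonlinearity) or from Proposition \ref{prop:pen_well_pose}, so the smallness of $\e$ closes the estimate uniformly. The exponential-decay factor $e^{-\gamma x}$ on the source side matches the expected decay of $g$ and $h$, ensuring that the a priori bounds of Proposition \ref{prop:pen_well_pose} propagate to the weighted space $\mathcal{Y}$.
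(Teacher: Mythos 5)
Before comparing the argument to anything in the paper, note that Proposition~\ref{prop:nonlinear_wellpose} is cited verbatim from \cite{golse} (Proposition 6.1) and the paper does \emph{not} supply its own proof. The closest thing to a paper proof is Proposition~\ref{prop:nonlinear_continuous}, which follows exactly the Banach fixed-point scheme you describe but in the exponential-weight space $\mathcal{X}_1$, so I will compare against that.

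Your overall strategy is correct and matches the paper's: define a ball in a weighted $L^\infty$ space, use the linear penalized solver (Proposition~\ref{prop:pen_well_pose}) together with the bilinear estimate \eqref{Gamma bounded} on $\Gamma$, control the auxiliary unknown $h$ via its explicit integral formula, and close by the $\e$-smallness of the quadratic nonlinearity. The bilinearity-splitting identity for the contraction step and the remark that $\tau_u - 2\gamma < 0$ makes the $z$-integral converge are both used in the paper.

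The genuine gap is in the choice of Banach space. You put the factor $e^{\gamma x}$ in front of $g$ in the norm of $\mathcal{Y}$, and you justify the invariance $\mathcal{T}(\mathcal{Y})\subset\mathcal{Y}$ by invoking ``the exponential-decay improvement indicated in the remark after'' Proposition~\ref{prop:pen_well_pose}. That remark reads
\begin{equation*}
\Vert (1+|\xi|)^3  g\Vert_{L^\infty_{x,\xi}}  \leq C \big[\Vert (1+|\xi|)^3  g_b\Vert_{L^\infty_{\xi}} + \Vert e^{\delta x}(1+|\xi|)^2  Q \Vert_{L^\infty_{x,\xi}}  \big],
\end{equation*}
so the weight $e^{\delta x}$ sits on the \emph{source} (a decay hypothesis on $Q$ needed to make the $L^2_{x,\xi}$ norm finite), not on the solution. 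Neither Proposition~\ref{prop:pen_well_pose}, nor its remark, nor the $w$-weighted upgrade \eqref{weighed_infty}, gives a bound on $\|e^{\gamma x}(1+|\xi|)^3 g\|_{L^\infty_{x,\xi}}$. Indeed the penalized problem is deliberately set up so that $g$ is only bounded (and in $L^2_x$), not $e^{-\gamma x}$-decaying: the penalization term $-\gamma(\xi_1+u)g$ in $\mathcal{L}^p$ is precisely what makes $e^{-\gamma x}g$ solve the \emph{unpenalized} equation, so the exponential decay attaches to $f = e^{-\gamma x}(g - h\phi_u)$, not to $g$ itself (this is why the paper's own $\mathcal{X}_1$ has norm $\Vert w g\Vert_{L^\infty_{x,\xi}}+\Vert h\Vert_{L^\infty_x}$ with no decay factor on $g$). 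Requiring $e^{\gamma x}g$ bounded asks for more than the linear theory delivers, so the self-map property of $\mathcal{T}$ cannot be verified, and the contraction does not close. Dropping the $e^{\gamma x}$ from the $g$-part of the norm (keeping it at most on $h$, whose formula has an explicit $e^{-\gamma x}$ prefactor) repairs the argument and recovers exactly the statement claimed, which has no decay on $g$; with this correction your proof coincides in substance with the paper's proof of Proposition~\ref{prop:nonlinear_continuous}.
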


In the following proposition, we construct the continuity and $w$-weighted $L^\infty_{x,\xi}$ estimate of the nonlinear problem.

\begin{proposition}\label{prop:nonlinear_continuous}
Suppose the boundary data \eqref{bdr_small} further satisfies \eqref{f_b_continuous}. Then the solution in Proposition \ref{prop:nonlinear_wellpose} satisfies
\begin{equation*}
h(x) \in C(\mathbb{R}^+), \ \ g(x,\xi) \in C(\mathbb{R}^+ \times \mathbb{R}^3 \backslash \mathcal{D}), \ \ \Vert wg\Vert_{L^\infty_{x,\xi}} + \Vert h\Vert_{L^\infty_x} \leq C\e.
\end{equation*}

\end{proposition}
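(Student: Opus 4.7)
The plan is to run a Banach fixed-point argument on the coupled pair $(g,h)$ in a space encoding both the $w$-weighted $L^\infty$ bound and the continuity asserted in the statement, and then invoke the uniqueness part of Proposition \ref{prop:nonlinear_wellpose} to identify this refined solution with the one already constructed there. Concretely, I would work in the Banach space
\begin{equation*}
\mathcal{Y} := \bigl\{(g,h) : g \in C(\mathbb{R}^+ \times \mathbb{R}^3 \setminus \mathcal{D}),\; wg \in L^\infty_{x,\xi},\; h \in C(\mathbb{R}^+) \cap L^\infty(\mathbb{R}^+)\bigr\},
\end{equation*}
with norm $\|(g,h)\|_{\mathcal{Y}} := \|wg\|_{L^\infty_{x,\xi}} + \|h\|_{L^\infty_x}$, and consider the small ball $\mathcal{B}_\delta \subset \mathcal{Y}$ of radius $\delta = C\varepsilon$. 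Given $(\tilde g, \tilde h) \in \mathcal{B}_\delta$, define $T(\tilde g, \tilde h) := (g,h)$ by setting
\begin{equation*}
h(x) := -e^{-\gamma x}\int_0^\infty e^{(\tau_u - 2\gamma)z}\langle \psi_u \Gamma(\tilde g - \tilde h\phi_u,\tilde g - \tilde h\phi_u)\rangle(x+z)\,\dd z,
\end{equation*}
and letting $g$ be the unique solution supplied by Proposition \ref{prop:pen_continuous} for the linearized penalized problem \eqref{pen_prob} with source $Q := e^{-\gamma x}(\mathbf{I} - \mathbf{P}_u)\Gamma(\tilde g - \tilde h\phi_u,\tilde g - \tilde h\phi_u)$ and boundary data $g_b := f_b + h(0)\phi_u$.

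The first step is to show that $T$ maps $\mathcal{B}_\delta$ into itself. Using \eqref{Gamma bounded} together with Lemma \ref{lemma:eigen_continuous} (giving $\|w\phi_u\|_{L^\infty_\xi} \lesssim 1$), one obtains
\begin{equation*}
\Bigl\|\tfrac{w}{1+|\xi|}\Gamma(\tilde g - \tilde h\phi_u,\tilde g - \tilde h\phi_u)\Bigr\|_{L^\infty_{x,\xi}} \lesssim (\|w\tilde g\|_{L^\infty_{x,\xi}} + \|\tilde h\|_{L^\infty_x})^2 \lesssim \delta^2.
\end{equation*}
Pairing with $\psi_u$, whose $w$-weighted $L^\infty$ bound is Lemma \ref{Lemma: w_psi_infty}, and using $|\tau_u| \sim |u| \ll \gamma$ so that $e^{(\tau_u - 2\gamma)z}$ is integrable, yields $\|h\|_{L^\infty_x} \lesssim \delta^2$; continuity of $h$ on $\mathbb{R}^+$ follows from dominated convergence. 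Next, since $f_b \in C(\mathbb{R}^3)$ and $\phi_u \in C(\mathbb{R}^3)$, the new boundary datum $g_b$ is continuous, and $Q$ is continuous on $\mathbb{R}^+\times\mathbb{R}^3$ (hence away from $\mathcal{D}$) with $\|wQ/(1+|\xi|)\|_{L^\infty_{x,\xi}} \lesssim \delta^2$ by the same estimate. Proposition \ref{prop:pen_continuous}, in the form of \eqref{weighed_infty}, then gives
\begin{equation*}
\|wg\|_{L^\infty_{x,\xi}} \lesssim \|wg_b\|_{L^\infty_\xi} + \bigl\|e^{\gamma x/2}\tfrac{w}{1+|\xi|}Q\bigr\|_{L^\infty_{x,\xi}} \lesssim \varepsilon + \delta^2,
\end{equation*}
together with continuity of $g$ away from $\mathcal{D}$. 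For $\delta = C\varepsilon$ with $\varepsilon$ small, this yields $\|T(\tilde g,\tilde h)\|_{\mathcal{Y}} \leq \delta$, hence invariance of $\mathcal{B}_\delta$.

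For the contraction step I would use the bilinearity identity $\Gamma(a,a) - \Gamma(b,b) = \tfrac12\Gamma(a-b,a+b) + \tfrac12\Gamma(a+b,a-b)$ applied with $a = \tilde g_1 - \tilde h_1\phi_u$, $b = \tilde g_2 - \tilde h_2\phi_u$, so that \eqref{Gamma bounded} produces a factor $\delta \cdot \|(\tilde g_1-\tilde g_2,\tilde h_1-\tilde h_2)\|_{\mathcal{Y}}$ on the right-hand side of every estimate. Propagating this through the definitions of $h$ and $g$ as above gives $\|T(\tilde g_1,\tilde h_1) - T(\tilde g_2,\tilde h_2)\|_{\mathcal{Y}} \leq C'\delta\,\|(\tilde g_1 - \tilde g_2,\tilde h_1 - \tilde h_2)\|_{\mathcal{Y}}$, which is a contraction for $\delta$ small. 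The Banach fixed-point theorem then furnishes a solution $(g,h) \in \mathcal{B}_\delta$ of \eqref{pen_nonlin_prob} with the stated continuity and weighted bound. Finally, since $w(\xi) \gtrsim (1+|\xi|)^3$, this fixed point lies in the uniqueness class of Proposition \ref{prop:nonlinear_wellpose} and therefore coincides with the solution constructed there.

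The main obstacle I anticipate is the coupling through the boundary value $g(0,\xi) = f_b(\xi) + h(0)\phi_u(\xi)$, since $h(0)$ itself depends on $g$ on the whole half-line; this is what forces the argument to be performed simultaneously on the pair $(g,h)$ rather than iteratively on $g$ alone. Fortunately, the damping $e^{-\gamma x}$ in the source for $g$ and the factor $e^{(\tau_u - 2\gamma)z}$ in the defining integral for $h$ provide the smallness and integrability needed to close both estimates, and the projection $\mathbf{I} - \mathbf{P}_u$ together with continuity and $w$-bound of $\phi_u,\psi_u$ ensures all operations preserve the ambient space $\mathcal{Y}$.
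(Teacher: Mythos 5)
Your proposal follows essentially the same route as the paper: a Banach fixed-point argument on the pair $(g,h)$ in a $w$-weighted $L^\infty$ space with continuity built in, decoupled through the linearized problem (Proposition \ref{prop:pen_continuous}), with the eigenfunction bounds from Lemma \ref{lemma:eigen_continuous} and Lemma \ref{Lemma: w_psi_infty} driving the $\Gamma$- and boundary-estimates, and the bilinearity of $\Gamma$ giving the contraction. The only cosmetic differences are that the paper bakes the $\mathcal{R}$-symmetry into its space $\mathcal{X}_1$ whereas you recover it by invoking uniqueness at the end, and the paper writes the contraction difference as $\Sigma = \Gamma(a-b,a)+\Gamma(b,a-b)$ rather than your symmetrized form; these are equivalent.
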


\begin{proof}
For some $C>0$ specified later, we denote a Banach space as
\begin{align}
  \mathcal{X}_1:=  &  \big\{(g,h) :  \Vert w g \Vert_{L^\infty_{x,\xi}} + \Vert h\Vert_{L^\infty_x} \leq 2C\e , \ g(x,\mathcal{R}\xi) = g(x,\xi),   \notag \\
  & \ \ \text{ and } h\in C(\mathbb{R}^3), \ g\in C(\mathbb{R}^+ \times \mathbb{R}^3 \backslash \mathcal{D}) \big\},  \label{nonlinear_banach}
\end{align}
with norm given by
\begin{equation*}
    \Vert (g,h)\Vert_{\mathcal{X}_1} := \Vert w g\Vert_{L^\infty_{x,\xi}} + \Vert h\Vert_{L^\infty_x}.
\end{equation*}
Given $(\tilde{g}, \tilde{h})\in \mathcal{X}_1$, we focus on the following linear problem:
\begin{equation}\label{pen_nonlin_prob_linear}
\begin{cases}
    &(\xi_1+u)\p_x g + \mathcal{L}^p g = e^{-\gamma x} (\mathbf\mathbf{P}_u)\Gamma(\tilde{g}-\tilde{h}\phi_u, \tilde{g}-\tilde{h}\phi_u)  , \\
    &h(x) = -e^{-\gamma x} \int_0^\infty e^{(\tau_u - 2\gamma)z} \langle \psi_u \Gamma(\tilde{g}-\tilde{h}\phi_u,\tilde{g}-\tilde{h}\phi_u)\rangle(x+z) \dd z,   \\
    & g(0,\xi) = f_b(\xi) + h(0)\phi_u(\xi), \ \ \xi_1+u>0.
\end{cases}
\end{equation}
The well-posedness of the above system is given by Proposition \ref{prop:pen_well_pose}, with $w$-weighted estimate given in Proposition \ref{prop:pen_continuous}. Then we denote $\mathcal{S}$ as the map from $(\tilde{g},\tilde{h})$ to the solution $(g,h)$: $\mathcal{S}((\tilde{g},\tilde{h})) = (g,h)$.

By the continuity of $\tilde{h}$ and $\tilde{g}$ from $(\tilde{g},\tilde{h})\in \mathcal{X}_1$, we conclude the continuity of $h(x)$ from the continuity of $\phi_u$ in Lemma \ref{lemma:eigen_continuous}.

Since $f_b$ is continuous from \eqref{f_b_continuous} and $\phi_u$ is continuous in Lemma \ref{lemma:eigen_continuous}, $g(0,\xi)$ is continuous. Then the continuity of $g$ follows from Proposition \ref{prop:pen_continuous}, with the continuity of $\phi_u$, $g(0,\xi)$, and the assumption that $(\tilde{g},\tilde{h})\in \mathcal{X}_1$.

Applying \eqref{Gamma bounded}, we have the estimate for the nonlinear term $\Gamma$ as
\begin{align}
    & \Big\Vert \frac{w}{[1+|\xi|]} \Gamma(\tilde{g}-\tilde{h}\phi_u, \tilde{g}-\tilde{h}\phi_u) \Big\Vert_{L^\infty_{x,\xi}}    \lesssim  \Vert w(\tilde{g} - \tilde{h}\phi_u)\Vert_{L^\infty_{x,\xi}}^2 \notag\\
    & \lesssim  \Vert w\tilde{g}\Vert^2_{L^\infty_{x,\xi}} + \Vert w\phi_u\Vert_{L^\infty_{x,\xi}}^2 \Vert \tilde{h}\Vert_{L^\infty_x}^2 \lesssim  \Vert (\tilde{g},\tilde{h})\Vert_{\mathcal{X}_1}^2.    \label{gamma_bdd}
\end{align}
In the last inequality we have applied Lemma \ref{lemma:eigen_continuous}.

Thus we bound $h(x)$ in \eqref{pen_nonlin_prob_linear} as
\begin{align}
   \Vert h\Vert_{L^\infty_x} & \lesssim  \Big\Vert \frac{w}{[1+|\xi|]} \Gamma\Big\Vert_{L^\infty_{x,\xi}} \Big\langle \psi_u \frac{[1+|\xi|]}{w} \Big\rangle \lesssim \Vert \psi_u\Vert_{L^2_\xi} \Vert (\tilde{g},\tilde{h})\Vert_{\mathcal{X}_1}^2 \lesssim \Vert (\tilde{g},\tilde{h})\Vert_{\mathcal{X}_1}^2. \label{h_bdd}
\end{align}
Here we have used Proposition \ref{prop:eigen_exists}.

For the boundary condition, we have
\begin{align*}
    & g_b = f_b + h(0)\phi_u,
\end{align*}
then from Lemma \ref{lemma:eigen_continuous} and \eqref{h_bdd}, we have
\begin{align}
  &  \Vert wg_b\Vert_{L^\infty_{\xi}} \leq \Vert wf_b\Vert_{L^\infty_\xi} + \Vert w\phi_u\Vert_{L^\infty_\xi} \Vert h\Vert_{L^\infty_x} \lesssim \Vert wf_b\Vert_{L^\infty_\xi} + \Vert (\tilde{g},\tilde{h}) \Vert_{\mathcal{X}_1}^2.    \label{gb_bdd}
\end{align}

Now we apply \eqref{weighed_infty} to $g$ in \eqref{pen_nonlin_prob_linear}, and we take $\delta=\gamma$ to have
\begin{align}
  \Vert wg\Vert_{L^\infty_{x,\xi}}  & \lesssim  \Vert wg_b\Vert_{L^\infty_\xi}  + \Big\Vert e^{(\delta-\gamma) x} \frac{w}{[1+|\xi|]}(\mathbf{I}-\mathbf{P}_u)\Gamma(\tilde{g}-\tilde{h}\phi_u,\tilde{g}-\tilde{h}\phi_u) \Big\Vert_{L^\infty_{x,\xi}}  \notag\\
  & \lesssim  \Vert wf_b\Vert_{L^\infty_\xi} +  \Vert (\tilde{g},\tilde{h})\Vert_{\mathcal{X}_1}^2. \label{g_bdd}
\end{align}
In the second line we have used \eqref{gb_bdd} and \eqref{gamma_bdd}, and applied the following computation for $\mathbf{P}_u$:
\begin{align}
    &\Big\Vert \frac{w}{[1+|\xi|]} \langle \psi_u \Gamma(\tilde{g}-\tilde{h}\phi_u,\tilde{g}-\tilde{h}\phi_u) \rangle (\xi_1+u)\phi_u \Big\Vert_{L^\infty_{x,\xi}}    \notag\\
    & \lesssim  \Vert w\phi_u \Vert_{L^\infty_\xi} \Big\Vert \frac{w}{[1+|\xi|]}\Gamma\Big\Vert_{L^\infty_{x,\xi}} \Big\langle \psi \frac{[1+|\xi|]}{w(\xi)}\Big\rangle \lesssim  \Vert w\phi_u \Vert_{L^\infty_\xi} \Vert \psi\Vert_{L^2_{\xi}} \Big\Vert \frac{w}{[1+|\xi|]}\Gamma\Big\Vert_{L^\infty_{x,\xi}} \lesssim \Big\Vert \frac{w}{[1+|\xi|]}\Gamma\Big\Vert_{L^\infty_{x,\xi}}. \label{P_u_gamma_infty}
\end{align}

Combining \eqref{h_bdd} and \eqref{g_bdd}, we conclude that for some $C>0$,
\begin{align*}
  \Vert (g,h)\Vert_{\mathcal{X}_1}  & \leq C\Vert wf_b\Vert_{L^\infty_\xi} + C\Vert (\tilde{g},\tilde{h})\Vert_{\mathcal{X}_1}^2 .
\end{align*}
Then we let $\e$ in \eqref{f_b_continuous} be small enough such that $4C^2\e<1$. Then if $\Vert 
(\tilde{g},\tilde{h}) \Vert_{\mathcal{X}_1}\leq 2C\e$, we have
\begin{align}
    & \Vert (g,h)\Vert_{\mathcal{X}_1} \leq C\e + 4C^3 \e^2 \leq 2C\e. \label{g_h_bdd}
\end{align}

Combining the continuity of $g$, $h$ and \eqref{g_h_bdd}, we conclude that $(g,h)\in \mathcal{X}_1$.

It remains to prove the contraction property. Given $(\tilde{g}_1,\tilde{h}_1),(\tilde{g}_2,\tilde{h}_2)\in \mathcal{X}_1$ as the source terms in \eqref{pen_nonlin_prob_linear}, we denote the corresponding solutions as $(g_1,h_1),(g_2,h_2)$. The equations for $g_1-g_2,h_1-h_2$ read
\begin{equation}\label{nonlinear_difference}
\begin{cases}
    &  (\xi_1+u)\p_x (g_1-g_2) + \mathcal{L}^p (g_1-g_2) = e^{-\gamma x} (\mathbf{I}-\mathbf{P}_u)\Sigma, \\
    &\Sigma =  \Gamma(\tilde{g}_1-\tilde{g}_2-(\tilde{h}_1-\tilde{h}_2)\phi_u, \tilde{g}_1 - \tilde{h}_1\phi_u)+ \Gamma(\tilde{g}_2 - \tilde{h}_2 \phi_u, \tilde{g}_1 - \tilde{g}_2 - (\tilde{h}_1-\tilde{h}_2)\phi_u) , \\
    & (h_1-h_2)(x) = -e^{-\gamma x} \int_0^\infty e^{(\tau_u-2\gamma)z} \langle \psi_u \Sigma\rangle(x+z) \dd z, \\
    & (g_1-g_2)(0,\xi) = (h_1-h_2)(0) \phi_u(\xi),  \ \ \xi_1+u>0.     
\end{cases}
\end{equation}

Applying \eqref{Gamma bounded} and the same computation in \eqref{gamma_bdd}, the source term is bounded as
\begin{align}
  \Big\Vert \frac{w}{[1+|\xi|]}\Sigma\Big\Vert_{L^\infty_{x,\xi}}  & \lesssim  \Vert w[\tilde{g}_1-\tilde{g}_2-(\tilde{h}_1-\tilde{h}_2)\phi_u ]\Vert_{L^\infty_{x,\xi}} \big[ \Vert w(\tilde{g}_1-\tilde{h}_1 \phi_u)\Vert_{\lfty} + \Vert w(\tilde{g}_2-\tilde{h}_2 \phi_u)\Vert_{\lfty}  \big] \notag\\
  &\lesssim [\Vert (\tilde{g}_1,\tilde{h}_1)\Vert_{\mathcal{X}_1} + \Vert (\tilde{g}_2,\tilde{h}_2)\Vert_{\mathcal{X}_1}]\Vert (\tilde{g}_1-\tilde{g}_2,\tilde{h}_1-\tilde{h}_2)\Vert_{\mathcal{X}_1}  \lesssim \e \Vert (\tilde{g}_1-\tilde{g}_2,\tilde{h}_1-\tilde{h}_2)\Vert_{\mathcal{X}_1} . \notag
\end{align}
Then we apply the same computation in \eqref{h_bdd} to bound $h_1-h_2$ as
\begin{align}
  \Vert h_1-h_2\Vert_{L^\infty_x}  & \lesssim  \Big\Vert \frac{w}{[1+|\xi|]}\Sigma\Big\Vert_{L^\infty_{x,\xi}} \lesssim \e \Vert (\tilde{g}_1-\tilde{g}_2,\tilde{h}_1-\tilde{h}_2)\Vert_{\mathcal{X}_1} .  \label{h_12_bdd}
\end{align}

We denote the incoming boundary as $(g_1-g_2)_b$, then it is bounded similarly as \eqref{h_12_bdd}:
\begin{align*}
  \Vert w(g_1-g_2)_b\Vert_{L^\infty_\xi}  &  \lesssim \Vert h_1-h_2\Vert_{L^\infty_x} \lesssim \e\Vert (\tilde{g}_1-\tilde{g}_2, \tilde{h}_1-\tilde{h}_2)\Vert_{\mathcal{X}_1}.  
\end{align*}

Now we apply \eqref{weighed_infty} with $\delta = \gamma$ and similar computation in \eqref{g_bdd} to bound $g_1-g_2$ as
\begin{align}
  \Vert w(g_1-g_2)\Vert_{L^\infty_{x,\xi}}  & \lesssim  \Vert w(g_1-g_2)_b\Vert_{L^\infty_{\xi}} +  \Big\Vert \frac{w}{[1+|\xi|]}(\mathbf{I}-\mathbf{P}_u)\Sigma \Big\Vert_{L^\infty_{x,\xi}} \lesssim \e\Vert (\tilde{g}_1-\tilde{g}_2, \tilde{h}_1-\tilde{h}_2)\Vert_{\mathcal{X}_1}. \label{g_12_bdd}
\end{align}

Combining \eqref{h_12_bdd} and \eqref{g_12_bdd} we conclude that for some $C_1>0$,
\begin{align}
  \Vert (g_1-g_2,h_1-h_2) \Vert_{\mathcal{X}_1}   &  \leq C_1  \e \Vert (\tilde{g}_1-\tilde{g}_2,\tilde{h}_1-\tilde{h}_2)\Vert_{\mathcal{X}_1}.    \label{chi_1_contraction}
\end{align}
Last we take $\e$ to be small such that $C_1\e < 1$. We conclude the proposition from the Banach fixed-point theorem.

\end{proof}

\ \\

\subsection{Proof of Theorem \ref{thm:continuity}}\label{sec:proof_contin}

Finally, the solution of the boundary layer problem \eqref{equation_f} is constructed by assuming further condition \eqref{extra_assumption} on the boundary data. The solution reads
\begin{equation*}
f(x,\xi) = e^{-\gamma x}g(x,\xi) - e^{-\gamma x}h(x)\phi_u(\xi),
\end{equation*}
where $g,h$ are the unique solution constructed in Proposition \ref{prop:nonlinear_wellpose}. With the continuity of $g,h,\phi_u$ and $w$-weighted estimate of $g,\phi$ established in Lemma \ref{lemma:eigen_continuous} and Proposition \ref{prop:nonlinear_continuous}, we conclude Theorem \ref{thm:continuity}.

\ \\

\section{Weighted $C^1$ estimate and $W^{1,p}$ estimate for $p<2$ without weight}\label{sec:regularity}
In the section, we will conclude Theorem \ref{thm:weight_C1} by applying the weight $\alpha$ in \eqref{alpha} with its property in Section \ref{sec:prelim_weight}.

We begin with proving the existence of derivative to the damped transport equation
\begin{equation}\label{linear_eqn}
\begin{cases}
   & (\xi_1+u)\p_x f(x,\xi) + \nu f(x,\xi) = Q  \\
   & f(0,\xi)|_{\xi_1+u>0} = f_b(\xi).
\end{cases}
\end{equation}

\hide
By method of characteristic, the solution to \eqref{linear_eqn} can be expressed as
\begin{equation}\label{characteristic_g}
\begin{split}
    f(x,\xi) & = [\mathbf{1}_{\frac{x}{\xi_1+u}>t}+\mathbf{1}_{\xi_1+u<0}]e^{-\nu(\xi) t} f(x-(\xi_1+u)t,\xi)  \\
   & \  +\mathbf{1}_{0<\frac{x}{\xi_1+u}\leq t} e^{-\nu(\xi) \frac{x}{\xi_1+u}} f(0,\xi)\\
   &\ + \int^t_{\max\{0,\mathbf{1}_{\xi_1+u>0}(t-\frac{x}{\xi_1+u})\}} \dd s e^{-\nu(\xi)(t-s)} Q(x-(\xi_1+u)(t-s),\xi)  .    
\end{split}
\end{equation}
Here we choose a large and fixed $t\gg 1$.

We consider the following expansion:
\begin{equation}\label{p_x_expansion}
\begin{split}
 \p_x f(x,\xi)&  =  -\mathbf{1}_{x< t(\xi_1+u)}  \frac{\nu(\xi)}{\xi_1+u} e^{-\nu(\xi) \frac{x}{\xi_1+u}} f(0,\xi)       \\
  & +\mathbf{1}_{x> t(\xi_1+u)} e^{-\nu(\xi) t} \p_x f (x-(\xi_1+u)t,\xi) \\
  & + \int^t_{\max\{0,\mathbf{1}_{\xi_1+u>0}(t-\frac{x}{\xi_1+u})\}} \dd s e^{-\nu(\xi)(t-s)} \p_x Q(x-(\xi_1+u)(t-s),\xi) \\
  & -\mathbf{1}_{x< t(\xi_1+u)} \frac{1}{\xi_1+u} e^{-\nu(\xi) \frac{x}{\xi_1+u}} Q(0,\xi).    
\end{split}
\end{equation}

\unhide

\begin{lemma}\label{lemma:weak_deri}
Suppose $|Q|<\infty$ and $\p_x Q$ exists. Then $\p_x f(x,\xi)$ exists.
\end{lemma}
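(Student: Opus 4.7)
The plan is to differentiate the explicit characteristic representation of $f$ term by term and verify each piece is finite pointwise. Introducing a large fictitious time $t\gg 1$, I would apply the method of characteristics to write $f(x,\xi)$ as a sum of three contributions: an interior piece $e^{-\nu(\xi) t} f(x-(\xi_1+u)t,\xi)$ active when $\xi_1+u<0$ or $x>(\xi_1+u)t$; a boundary piece $e^{-\nu(\xi) x/(\xi_1+u)} f_b(\xi)$ active when $\xi_1+u>0$ and $0<x\leq (\xi_1+u)t$; and a Duhamel integral $\int e^{-\nu(\xi)(t-s)} Q(x-(\xi_1+u)(t-s),\xi)\,\dd s$ whose lower endpoint is $\max\{0,\mathbf{1}_{\xi_1+u>0}(t-x/(\xi_1+u))\}$.

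Next, I would differentiate each summand in $x$. The interior piece yields $e^{-\nu(\xi) t}\p_x f$ evaluated at the shifted base point, carried along as an iterative term that will be damped by the prefactor $e^{-\nu(\xi) t}$ when $t$ is large and fixed in subsequent estimates. The boundary piece produces $-\frac{\nu(\xi)}{\xi_1+u}e^{-\nu(\xi) x/(\xi_1+u)} f_b(\xi)$, finite because $\xi_1+u\neq 0$ in the region where this term is active. The Duhamel integral splits by Leibniz into a bulk part $\int e^{-\nu(\xi)(t-s)}\p_x Q(x-(\xi_1+u)(t-s),\xi)\,\dd s$, well defined by the hypothesis that $\p_x Q$ exists, plus a boundary-of-integration contribution $-\frac{1}{\xi_1+u}e^{-\nu(\xi) x/(\xi_1+u)} Q(0,\xi)$, finite because $|Q|<\infty$ and $\xi_1+u\neq 0$.

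Collecting these into a single piecewise expression yields a formula with six pieces that serves as the basis for the subsequent weighted $C^1$ analysis. The only singular factor throughout is $1/(\xi_1+u)$, which is finite away from the grazing set $\mathcal{D}$ in \eqref{grazing}; this is precisely the singularity that motivates introducing the kinetic weight $\alpha(x,\xi)$. The main bookkeeping hurdle is verifying that differentiating the indicators in $x$ produces no spurious delta contributions at the matching surface $x=(\xi_1+u)t$. This is routine because at that surface the two representations coincide: the interior piece equals $e^{-\nu(\xi) t} f_b(\xi)$, which equals the boundary piece at $x=(\xi_1+u)t$, and the Duhamel integral has the same lower limit in both representations, so the boundary jumps cancel exactly and the piecewise derivative is consistent across regimes.
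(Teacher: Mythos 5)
Your proposal has a circularity problem that prevents it from being a proof of existence. In the region where the interior piece is active you differentiate $e^{-\nu(\xi)t}f(x-(\xi_1+u)t,\xi)$ to produce $e^{-\nu(\xi)t}\,\p_x f(x-(\xi_1+u)t,\xi)$, which presupposes that $\p_x f$ already exists at the shifted base point --- precisely the quantity the lemma is supposed to establish. Writing down such a self-referential representation is appropriate for the a priori estimate in Lemma \ref{lemma:aprori_linear}, which explicitly \emph{assumes} $\p_x g$ exists, but it does not by itself prove existence. The phrase ``carried along as an iterative term that will be damped'' is not an argument: for $\xi_1+u<0$ the shifted base point recedes to infinity and no fixed-point relation is set up, while for $\xi_1+u=0$ the shifted base point coincides with $x$, so the relation really is a fixed-point equation in $\p_x f(x,\xi)$ --- but you never solve it. The concluding discussion of the matching surface $x=(\xi_1+u)t$ addresses consistency of the piecewise formula, which is a separate concern from existence and is not what this lemma is about.

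The paper's proof sidesteps the loop by splitting according to whether $\xi_1+u$ vanishes. When $\xi_1+u=0$ the characteristic is stationary, so the mild formula yields the closed algebraic relation
\begin{align*}
(1-e^{-\nu(\xi)t})\,\frac{f(x+\e,\xi)-f(x,\xi)}{\e}
= \int^t_0 e^{-\nu(\xi)(t-s)}\,\frac{Q(x+\e,\xi)-Q(x,\xi)}{\e}\,\dd s ,
\end{align*}
in which the coefficient $1-e^{-\nu(\xi)t}$ is bounded away from zero; one divides and passes $\e\to 0$ using the hypothesis on $\p_x Q$. When $\xi_1+u\neq 0$ one need not invoke the iterated representation at all: from the equation $(\xi_1+u)\p_x f+\nu f = Q$ one reads off $\p_x f=(Q-\nu f)/(\xi_1+u)$, which is finite and well-defined since $|Q|<\infty$ and $\Vert wf\Vert_{\lfty}<\infty$ (equivalently, this is the fundamental theorem of calculus for the one-dimensional ODE along the non-degenerate characteristic). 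To repair your proposal you would need to supply exactly this case split: solve the fixed-point relation when the base point is stationary, and invoke the non-degeneracy of the characteristic when it is not.
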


\begin{proof}
First we consider the case of $\xi_1+u=0$. Then $f(x,\xi)$ can be expressed as
\begin{align*}
    & f(x,\xi) = e^{-\nu(\xi)t}f(x,\xi) + \int^t_0 \dd s e^{-\nu(\xi)(t-s)} Q(x,\xi).
\end{align*}
The difference quotient reads
\begin{align*}
    &  \frac{f(x+\e,\xi)-f(x,\xi)}{\e} = e^{-\nu(\xi)t} \frac{f(x+\e,\xi)-f(x,\xi)}{\e} + \int^t_0 \dd s e^{-\nu(\xi)(t-s)} \frac{Q(x+\e,\xi)-Q(x,\xi)}{\e},
\end{align*}
thus
\begin{align*}
    (1-e^{-\nu(\xi)t})\frac{f(x+\e,\xi)-f(x,\xi)}{\e} = \int^t_0 \dd s e^{-\nu(\xi)(t-s)} \frac{Q(x+\e,\xi)-Q(x,\xi)}{\e}.
\end{align*}
From the assumption that $\p_x Q$ exists, we can pass $\e$ to $0$ to conclude that $\p_x f(x,\xi)$ exists for $\xi_1+u=0$.

Next, we consider the case of $\xi_1+u\neq 0$. Then from the equation \eqref{linear_eqn}, we have $|\p_x f| = \Big|\frac{Q-\nu f}{\xi_1+u}\Big| < \infty $ from $\Vert wf\Vert_{\lfty}<\infty$. Thus $\p_x f(x,\xi)$ exists.

\hide
Next we consider the case of $\xi_1 + u < 0$. By the $L^\infty$ estimate of $f$ in Theorem \ref{thm:continuity}, we can also express $f$ as
\begin{align*}
    f(x,\xi) \mathbf{1}_{\xi_1+u < 0} & = \int^t_{-\infty} \dd s e^{-\nu(t-s)} Q(x-(\xi_1+u)(t-s),\xi).
\end{align*}
Clearly, we have 
\begin{align*}
 \p_x  f(x,\xi)  \mathbf{1}_{\xi_1+u < 0} & = \int_{-\infty}^t \dd s e^{-\nu(t-s)} \p_x Q(x-(\xi_1+u)(t-s),\xi).   \\
\end{align*}
Thus, $\p_x f$ exists when $\xi_1+u < 0$. Since we can also write
\begin{align*}
   f(x,\xi) \mathbf{1}_{\xi_1+u < 0}&  = \mathbf{1}_{\xi_1+u < 0} \big[ e^{-\nu(\xi) t} f(x-(\xi_1+u)t,\xi) + \int^t_0 \dd s e^{-\nu(\xi)(t-s)} Q(x-(\xi_1+u)(t-s),\xi)   \big],
\end{align*}
the existence of the $\p_x f$ leads to another representation of $\p_x f:$
\begin{equation}\label{xi_u_leq_0}
\begin{split}
  \p_x f(x,\xi) \mathbf{1}_{x_1+u\leq 0} &  = \mathbf{1}_{\xi_1+u\leq 0} e^{-\nu(\xi) t} \p_x f(x-(\xi_1+u)t,\xi)   \\
  & + \mathbf{1}_{\xi_1+u\leq 0} \int_0^t \dd s e^{-\nu(\xi)(t-s)} \p_x Q(x-(\xi_1+u)(t-s),\xi).
\end{split}
\end{equation}

Then we consider the case of $\xi_1+u > 0$. Then we express $f$ as
\begin{align*}
  f(x,\xi)\mathbf{1}_{\xi_1+u>0}  &  =    e^{-\nu(\xi) \frac{x}{\xi_1+u}} f(0,\xi) \\
  & + \int^t_{t-\frac{x}{\xi_1+u}} \dd s e^{-\nu(\xi)(t-s)} Q(x-(\xi_1+u)(t-s),\xi).
\end{align*}
Then $\p_x f$ exists as
\begin{align*}
  \p_x f(x,\xi)  &  = -\mathbf{1}_{\xi_1+u > 0} \big[ \frac{\nu(\xi)}{\xi_1+u} f(0,\xi) + \frac{1}{\xi_1+u} e^{-\nu(\xi)\frac{x}{\xi_1+u}} Q(0,\xi) \big] \\
  & + \int^t_{t-\frac{x}{\xi_1+u}} \dd s e^{-\nu(\xi)(t-s)} \p_x Q(x-(\xi_1+u)(t-s),\xi).
\end{align*}

Then when $x>t(\xi_1+u)$, we can also express $f$ as
\begin{align*}
  f(x,\xi) \mathbf{1}_{x>t(\xi_1+u)>0}  &  =  \mathbf{1}_{x>t(\xi_1+u)>0} e^{-\nu(\xi) t} f(x-t(\xi_1+u),\xi)   \\
  & +  \mathbf{1}_{x>t(\xi_1+u)>0} \int_{0}^t \dd s e^{-\nu(\xi)(t-s)} Q(x-(\xi_1+u)(t-s),\xi).
\end{align*}

Since $\p_x f$ exists, we can also represent $\p_x f$ as
\begin{align*}
  \p_x f(x,\xi)  \mathbf{1}_{x>t(\xi_1+u)>0}    &  =  \mathbf{1}_{x>t(\xi_1+u)>0}   e^{-\nu(\xi) t} \p_x f(x-t(\xi_1+u),\xi)  \\
  & +  \mathbf{1}_{x>t(\xi_1+u)>0}   \int_0^t \dd s e^{-\nu(\xi)(t-s)} \p_x Q(x-(\xi_1+u)(t-s),\xi).
\end{align*}

\unhide

\end{proof}

\ \\

\subsection{Weighted $C^1$ estimate of the linearized penalized problem}

Before we construct the derivative of the linearized penalized problem, we first establish an a-priori estimate in the following lemma:
\begin{lemma}[\textbf{A-priori weighted $C^1$ estimate}]\label{lemma:aprori_linear}
Let $g$ be the solution of the linearized problem \eqref{pen_prob}, suppose $\p_x g$ exists and satisfies
\begin{align*}
    (\xi_1+u)\p_x (\p_x g) + \mathcal{L}^p (\p_x g) = \p_x Q, \ x>0, \ \xi\in \mathbb{R}^3.
\end{align*}
If we further assume $\Vert w_{\tilde{\theta}}\alpha \p_x g\Vert_{\lfty}<\infty$, then we have
\begin{align}
  &\Vert w_{\tilde{\theta}}\alpha \p_x g \Vert_{L^\infty_{x,\xi}} \lesssim  \Vert w g\Vert_{L^\infty_{x,\xi}} + \Vert wg_b\Vert_{L^\infty_\xi} + \Vert w_{\tilde{\theta}}(\xi)Q(0,\xi)\Vert_{L^\infty_{\xi}}  \notag\\
 & + \underbrace{\sup_{x,\xi}\Big[w_{\tilde{\theta}}\alpha(x,\xi) \int^t_{\max\{0,\mathbf{1}_{\xi_1+u>0}(t-\frac{x}{\xi_1+u})\}} \dd s e^{-(\nu(\xi)-\gamma(\xi_1+u))(t-s)} |\p_x Q(x-(\xi_1+u)(t-s),\xi)| \Big]}_{\eqref{aprior_deri}_*}. \label{aprior_deri}
\end{align}

\end{lemma}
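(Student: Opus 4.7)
The plan is to derive a Duhamel representation for $G := \partial_x g$ along characteristics and bound each term after inserting the weight $w_{\tilde\theta}(\xi)\alpha(x,\xi)$. Differentiating \eqref{pen_prob} gives $(\xi_1+u)\partial_x G + [\nu(\xi)-\gamma(\xi_1+u)]G = K^p G + \partial_x Q$; integrating along the backward characteristic $x-(\xi_1+u)(t-s)$ up to the fixed large time $t$ yields a piecewise formula: on $\{\xi_1+u\leq 0\}\cup\{x\geq t(\xi_1+u)>0\}$ the trajectory stays in $\{x\geq 0\}$ and contributes the initial piece $e^{-[\nu(\xi)-\gamma(\xi_1+u)]t}G(x-(\xi_1+u)t,\xi)$, while on the complementary set the trajectory exits through $x=0$ at $s=t-\frac{x}{\xi_1+u}$ with a boundary contribution involving $G(0,\xi)$. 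Using the equation itself at $x=0$,
\begin{equation*}
G(0,\xi) = \frac{Q(0,\xi) - [\nu(\xi)-\gamma(\xi_1+u)]g_b(\xi) + K^p g(0,\xi)}{\xi_1+u},
\end{equation*}
so the boundary contribution carries an apparent $1/(\xi_1+u)$ singularity that is exactly absorbed by $\alpha(0,\xi)\leq |\xi_1+u|$ from \eqref{alpha_bdr}. The resulting explicit bounds are then controlled by $\Vert wg_b\Vert_{L^\infty_\xi}$, $\Vert w_{\tilde\theta}Q(0,\xi)\Vert_{L^\infty_\xi}$, and, for the $K^p g(0,\xi)$ piece, $\Vert wg\Vert_{L^\infty_{x,\xi}}$ via Lemma \ref{lemma:Kp}.

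For the bulk contributions of $\partial_x Q$ and the initial piece, I apply Lemma \ref{lemma:velocity_alpha}: transporting $\alpha$ along the characteristic costs at most a factor $e^{2c\nu_0(t-s)}$ which, because $c=1/8$, is readily dominated by part of the transport damping $e^{-[\nu(\xi)-\gamma(\xi_1+u)](t-s)}$. The $\partial_x Q$ contribution then reproduces exactly the time-integral quantity $\eqref{aprior_deri}_*$ stated in the lemma, while the initial piece at $s=0$ carries a factor $e^{-\nu_0 t/2}\ll 1$ that, under the a priori hypothesis $\Vert w_{\tilde\theta}\alpha\partial_x g\Vert_{L^\infty_{x,\xi}}<\infty$, can be absorbed to the left-hand side for $t$ chosen sufficiently large. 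The two projection terms in $K^p$ from \eqref{Kp} reduce to bounded pairings with $G$ accompanied by the extra small factor $\gamma$, and are handled in the same fashion.

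The main obstacle is the nonlocal piece $K(\partial_x g)$. Following the strategy of Section \ref{sec:proof_strategy}, I apply the Grad estimate from Lemma \ref{lemma:k_theta} and iterate the characteristic representation along the inner velocity $\xi'$, producing a double Duhamel integral in $(s,s')$ whose integrand is proportional to $\partial_x g(x^\sharp,\xi'')$ with $x^\sharp := x-(\xi_1+u)(t-s)-(\xi_1'+u)(s-s')$. I split the inner $ds'$ integration at $s-s' = \e$. On $\{s-s'\leq\e\}$ I retain the derivative, majorize it by $\Vert w_{\tilde\theta}\alpha\partial_x g\Vert_{L^\infty_{x,\xi}}/\alpha(x^\sharp,\xi'')$, and invoke Lemma \ref{lemma:NLN}, in particular the bound \eqref{nln_e}, which delivers a prefactor $(\sqrt\e+\e\ln t)/\alpha(x,\xi)$ that for $\e$ sufficiently small is absorbed into the left-hand side. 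On $\{s-s'\geq\e\}$ I use the key identity $\partial_x g(x^\sharp,\xi'') = -\frac{1}{s-s'}\partial_{\xi_1'}[g(x^\sharp,\xi'')]$ and integrate by parts in $\xi_1'$; the derivative falls either on $\mathbf{k}_{\tilde\theta}(\xi',\xi'')$, in which case the extra factor $[1+|\xi'|^2]$ in \eqref{nabla_k_theta} is precisely the reason to work with $w_{\tilde\theta}$ in place of $w$, or on the exponential damping factors, or produces the integrable $1/(s-s')$ boundary contribution at $s-s'=\e$. In every case the remaining integrand involves $g$ rather than $\partial_x g$, is controlled by $\Vert wg\Vert_{L^\infty_{x,\xi}}$, and the outer $(\xi',\xi'',s)$ integration is closed by Lemma \ref{lemma:NLN_inner} up to a fixed factor $t$ that is harmless since $t$ is chosen and held fixed. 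Combining the three contributions and absorbing the small $\sqrt\e+\e\ln t$ prefactor to the left yields \eqref{aprior_deri}.
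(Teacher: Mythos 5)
Your proposal is correct and follows essentially the same route as the paper's proof: differentiate along characteristics to reach the piecewise formula \eqref{p_x_1}--\eqref{p_x_6}, absorb the initial piece after paying the factor $e^{2c\nu_0 t}$ from Lemma~\ref{lemma:velocity_alpha}, treat the boundary singularity $1/(\xi_1+u)$ with $\alpha(0,\xi)\le|\xi_1+u|$, handle the projection pieces of $K^p$ with the extra $\gamma$, and close the nonlocal $K(\partial_x g)$ term by the double Duhamel expansion, split at $s-s'=\e$, using \eqref{nln_e} on the short interval and the $\xi_1'$-integration by parts (via the chain rule $\partial_x=-\frac{1}{s-s'}\partial_{\xi_1'}$, \eqref{nabla_k_theta}, and Lemma~\ref{lemma:NLN_inner}) on the long one. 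The only cosmetic difference is that you write the Duhamel formula directly for $G=\partial_x g$ and substitute the equation at $x=0$ to express $G(0,\xi)$, whereas the paper differentiates the Duhamel formula for $g$, producing the same three boundary terms $\eqref{p_x_2},\eqref{p_x_3},\eqref{p_x_5}$; these are algebraically identical.
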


\begin{proof}
First we let $1\ll t$ be large and fixed, so that 
\begin{equation}\label{t_large}
\frac{1}{t} \ll 1, \ \ e^{-\nu_0 t/4}\ll 1 \text{ and }   t e^{-\nu_0 t/4}\ll 1 .    
\end{equation}

Recall the definition of $K^p$ in \eqref{Kp}. We denote 
\begin{equation}\label{nu_bar}
\bar{\nu}(\xi) := \nu(\xi) - \gamma(\xi_1+u)\geq \frac{\nu(\xi)}{2}\geq \frac{\nu_0(\xi)}{2}:= \frac{\nu_0 [1+|\xi|]}{2}.    
\end{equation}

By method of characteristic, we express $g$ as
\begin{align*}
    g(x,\xi) & = [\mathbf{1}_{\frac{x}{\xi_1+u}>t}+\mathbf{1}_{\xi_1+u<0}]e^{-\bar{\nu}(\xi) t} g(x-(\xi_1+u)t,\xi)  \\
   & \  +\mathbf{1}_{0<\frac{x}{\xi_1+u}\leq t} e^{-\bar{\nu}(\xi) \frac{x}{\xi_1+u}} g(0,\xi)\\
   &\  + \int^t_{\max\{0,\mathbf{1}_{\xi_1+u>0}(t-\frac{x}{\xi_1+u})\}} \dd s e^{-\bar{\nu}(\xi)(t-s)} K^p g(x-(\xi_1+u)(t-s),\xi) \\
   &\ + \int^t_{\max\{0,\mathbf{1}_{\xi_1+u>0}(t-\frac{x}{\xi_1+u})\}} \dd s e^{-\bar{\nu}(\xi)(t-s)} Q(x-(\xi_1+u)(t-s),\xi)  .
\end{align*}
From the assumption that $\p_x g$ exists, we take derivative to the above formula to have 
\begin{align}
  \p_x g(x,\xi)  & = [\mathbf{1}_{\frac{x}{\xi_1+u}>t}+\mathbf{1}_{\xi_1+u<0}]e^{-\bar{\nu}(\xi) t} \p_x g(x-(\xi_1+u)t,\xi) \label{p_x_1} \\
     & \ - \mathbf{1}_{0<\frac{x}{\xi_1+u}\leq t} \frac{\bar{\nu}(\xi)}{\xi_1+u} e^{-\bar{\nu}(\xi) \frac{x}{\xi_1+u}} g(0,\xi) \label{p_x_2}\\
     & \ -  \mathbf{1}_{0<\frac{x}{\xi_1+u}\leq t} \frac{1}{\xi_1+u} e^{-\bar{\nu}(\xi) \frac{x}{\xi_1+u} }K^p g(0,\xi) \label{p_x_3}\\
     & \ +  \int^t_{\max\{0,\mathbf{1}_{\xi_1+u>0}(t-\frac{x}{\xi_1+u})\}} \dd s e^{-\bar{\nu}(\xi)(t-s)} K^p \p_x g(x-(\xi_1+u)(t-s),\xi) \label{p_x_4} \\
     & \ -  \mathbf{1}_{0<\frac{x}{\xi_1+u}\leq t} \frac{1}{\xi_1+u} e^{-\bar{\nu}(\xi)\frac{x}{\xi_1+u}} Q(0,\xi) \label{p_x_5}\\
     & \ + \int^t_{\max\{0,\mathbf{1}_{\xi_1+u>0}(t-\frac{x}{\xi_1+u})\}} \dd s e^{-\bar{\nu}(\xi)(t-s)} \p_x Q(x-(\xi_1+u)(t-s),\xi)\label{p_x_6} .
\end{align}

We estimate~\eqref{p_x_1} - \eqref{p_x_6} term by term.

For~\eqref{p_x_1}, we apply~\eqref{nu_bar} and to have
\begin{align*}
  |\eqref{p_x_1}|  & \lesssim e^{-\nu_0 t/2} \frac{\Vert w_{\tilde{\theta}}\alpha \p_x g\Vert_{L^\infty_{x,\xi}}}{w_{\tilde{\theta}}(\xi)\alpha(x-(\xi_1+u)t,\xi)} \lesssim e^{-\nu_0t/4} \frac{\Vert w_{\tilde{\theta}}\alpha\p_x g\Vert_{L^\infty_{x,\xi}}}{w_{\tilde{\theta}}(\xi)\alpha(x,\xi)} .
\end{align*}
In the last inequality, we applied Lemma \ref{lemma:velocity_alpha} with $c=1/8$.

For~\eqref{p_x_2}, we use~\eqref{alpha_bdr} to have
\begin{align}
  |\eqref{p_x_2}|  & \lesssim \frac{1}{\alpha(0,\xi)} e^{- \frac{\nu_0 x}{2(\xi_1+u)}} \frac{[1+|\xi|]}{w(\xi)} \Vert  w  g_b\Vert_{L^\infty_{\xi}}   \notag\\
  & \lesssim  \frac{1}{w_{\tilde{\theta}}(\xi)\alpha(x,\xi)} e^{\frac{\nu_0 x}{4(\xi_1+u)}}e^{- \frac{\nu_0x}{2(\xi_1+u)}} \Vert w g_b\Vert_{L^\infty_{\xi}} \lesssim \frac{\Vert wg_b\Vert_{L^\infty_{\xi}}}{w_{\tilde{\theta}}(\xi)\alpha(x,\xi)}. \label{p_x_2_bdd}
\end{align}
In the second line we applied Lemma \ref{lemma:velocity_alpha} with $s=x/(\xi_1+u)$ and $c=\frac{1}{8}$.

For~\eqref{p_x_3}, we first consider the contribution of $Kg$. Applying Lemma \ref{lemma:k_theta}, we have
\begin{align*}
  &\frac{1}{\xi_1+u} e^{-\bar{\nu}(\xi)\frac{x}{\xi_1+u}} \frac{1}{w_{\tilde{\theta}}(\xi)}\int_{\mathbb{R}^3} \mathbf{k}(\xi,\xi') \frac{w_{\tilde{\theta}}(\xi)}{w_{\tilde{\theta}}(\xi')} w_{\tilde{\theta}}(\xi') g(0,\xi') \dd \xi'  \\
  & \lesssim  \frac{1}{\alpha(0,\xi)} e^{-\frac{\nu_0 x}{2(\xi_1+u)}} \Vert w g\Vert_{L^\infty_{x,\xi}}   \lesssim          \frac{1}{\alpha(x,\xi)} \Vert wg\Vert_{L^\infty_{x,\xi}},
\end{align*}
in the last inequality we applied the same computation as~\eqref{p_x_2_bdd}. 

For the contribution of $\gamma \prod_+ ((\xi_1+u)g)$ and $\gamma \mathbf{p}_u g$ in \eqref{p_x_3}, we have
\begin{align*}
 & \frac{1}{w_{\tilde{\theta}}(\xi)} \frac{e^{-\bar{\nu}(\xi)\frac{x}{\xi_1+u}}}{\xi_1+u} \gamma w_{\tilde{\theta}}(\xi) \big[\prod_+((\xi_1+u)g(0)) + \mathbf{p}_u g(0) \big]    \\
 &\lesssim  \frac{1}{w_{\tilde{\theta}}(\xi)} \frac{\gamma}{\xi_1+u} e^{-\bar{\nu}(\xi)\frac{x}{\xi_1+u}}  \Vert wg \Vert_{L^\infty_{x,\xi}}  \lesssim \frac{\Vert wg\Vert_{L^\infty_{x,\xi}}}{w_{\tilde{\theta}}(\xi)\alpha(x,\xi)}.   
\end{align*}
Here we have applied the same computation in \eqref{prod_bdd} and \eqref{p_g_bdd}.

Thus we conclude the estimate for \eqref{p_x_3} as
\begin{align}
  |\eqref{p_x_3}|  &  \lesssim      \frac{\Vert wg\Vert_{L^\infty_{x,\xi}}}{\alpha(x,\xi)}.    \notag
\end{align}

\eqref{p_x_5} is directly bounded as
\begin{equation*}
    |\eqref{p_x_5}|  \lesssim    \frac{1}{w_{\tilde{\theta}}(\xi)\alpha(0,\xi)} e^{-\frac{\nu_0 x}{2(\xi_1+u)}} \Vert w_{\tilde{\theta}} Q(0)\Vert_{L^\infty_\xi} \lesssim \frac{\Vert w_{\tilde{\theta}} Q(0)\Vert_{L^\infty_{\xi}}}{w_{\tilde{\theta}}(\xi)\alpha(x,\xi)}.
\end{equation*}

The contribution of \eqref{p_x_6} is already included in \eqref{aprior_deri}.

For~\eqref{p_x_4}, first we compute the contribution of $\gamma \prod_+ ((\xi_1+u)g)$ and $\gamma \mathbf{p}_u g$ as
\begin{align}
 & \gamma \int^t_{\max\{0,\mathbf{1}_{\xi_1+u>0}(t-\frac{x}{\xi_1+u})\}} \dd s e^{-\nu_0[1+|\xi|](t-s)/2}   \Big|\chi_+(\xi)\langle  (\xi_1+u) \chi_+\p_x g\rangle(x-(\xi_1+u)(t-s))   \notag\\
 &\quad \quad \quad \quad \quad +   \phi_u(\xi) \langle (\xi_1+u)\psi_u \p_x g\rangle(x-(\xi_1+u)(t-s))  \Big| \notag\\
 &\lesssim \frac{\gamma}{w_{\tilde{\theta}}(\xi)}\times \big[\Vert w_{\tilde{\theta}}\chi_+\Vert_{L^\infty_\xi} + \Vert w_{\tilde{\theta}}\phi_u\Vert_{L^\infty_\xi} \big] \Vert \alpha\p_x g\Vert_{L^\infty_{x,\xi}} \notag\\
 &\times \int^t_{\max\{0,\mathbf{1}_{\xi_1+u>0}(t-\frac{x}{\xi_1+u})\}} \dd s e^{-\nu_0 [1+|\xi|](t-s)/2} \int_{\mathbb{R}^3} \frac{w^{-1/2}(\xi')}{\alpha(x-(\xi_1+u)(t-s),\xi')} \dd \xi' \notag\\
 & \lesssim \frac{\gamma t\Vert  w_{\tilde{\theta}}\alpha\p_x g\Vert_{L^\infty_{x,\xi}}}{w_{\tilde{\theta}}(\xi) \alpha(x,\xi)}.   \label{K_other_bdd}
\end{align}
In the last line we have applied Lemma \ref{lemma:NLN_inner}. In the second line we used Lemma \ref{lemma:eigen_continuous}. In the third line, for the integration $\langle (\xi_1+u)\chi_+ \p_x g\rangle$ and $\langle (\xi_1+u)\psi \p_x g\rangle$, we used Lemma \ref{Lemma: w_psi_infty} to have
\begin{align*}
  &  \big|\langle (\xi_1+u)\chi_+ \p_x g\rangle + \langle (\xi_1+u)\psi_u \p_x g\rangle \big| = \int_{\mathbb{R}^3} [(\xi'_1 + u) \chi_+(\xi') \p_x g(\xi') + (\xi_1'+u) \psi_u(\xi') \p_x g(\xi')] \dd \xi'\\
  & \lesssim [\Vert w\psi_u\Vert_{L^\infty_\xi} + \Vert w \chi_+ \Vert_{L^\infty_\xi}] \Vert \alpha \p_x g\Vert_{L^\infty_{x,\xi}}\int_{\mathbb{R}^3} \frac{w^{-1/2}(\xi')}{\alpha(x-(\xi_1+u)(t-s),\xi')} \dd \xi'.
\end{align*}

We conclude that
\begin{align}
 & \eqref{p_x_1} + \eqref{p_x_2} + \eqref{p_x_3} + \eqref{p_x_5} + \eqref{p_x_6} + \eqref{p_x_4}_{\gamma \mathbf{p}_u g \text{ and }\gamma \prod_+((\xi_1+u)g)} \notag\\
 &\lesssim \frac{[e^{-\nu_0 t/4}+\gamma t]\Vert w_{\tilde{\theta}}\alpha \p_x g\Vert_{L^\infty_{x,\xi}} + \Vert wg_b\Vert_{L^\infty_\xi} + \Vert wg\Vert_{L^\infty_{x,\xi}} + \Vert w_{\tilde{\theta}}Q(0)\Vert_{L^\infty_\xi} + \eqref{aprior_deri}_*}{w_{\tilde{\theta}}(\xi)\alpha(x,\xi)}. \label{123_bdd}
\end{align}

For the contribution of 
\[K \p_x g(x-(\xi_1+u)(t-s),\xi)=\int_{\mathbb{R}^3} \mathbf{k}(\xi,\xi') \p_x g(x-(\xi_1+u)(t-s),\xi')\] 
in \eqref{p_x_4}, we expand $\p_x g(x-(\xi_1+u)(t-s),\xi')$ along $\xi'$ using~\eqref{p_x_1} - \eqref{p_x_6}.

In the expansion of $\p_x g(x-(\xi_1+u)(t-s),\xi')$,  the contribution of the $\eqref{p_x_1} - \eqref{p_x_6}$ except $Kg$ are bounded by \eqref{123_bdd} with replacing $(x,\xi)$ by $(x-(\xi_1+u)(t-s),\xi')$:
\begin{align}
    & \frac{1}{w_{\tilde{\theta}}(\xi)}\int^t_{\max\{0,\mathbf{1}_{\xi_1+u>0}(t-\frac{x}{\xi_1+u})\}} \dd s  e^{-\nu_0 [1+|\xi|] (t-s)/2} \int_{\mathbb{R}^3} \dd \xi' w_{\tilde{\theta}}(\xi)\mathbf{k}(\xi,\xi') \notag\\
    & \times \frac{[e^{-\nu_0 t/4}+\gamma t]\Vert \alpha w_{\tilde{\theta}}\p_x g\Vert_{L^\infty_{x,\xi}} + \Vert wg_b\Vert_{L^\infty_\xi} + \Vert wg\Vert_{L^\infty_{x,\xi}} + \Vert w_{\tilde{\theta}} Q(0)\Vert_{L^\infty_\xi} + \eqref{aprior_deri}_*}{w_{\tilde{\theta}}(\xi')\alpha(x-(\xi_1+u)(t-s),\xi')} \notag\\
    & \lesssim \frac{t\Big\{[e^{-\nu_0 t/4}+\gamma t]\Vert w_{\tilde{\theta}}\alpha \p_x g\Vert_{L^\infty_{x,\xi}} + \Vert wg_b\Vert_{L^\infty_\xi} + \Vert wg\Vert_{L^\infty_{x,\xi}} + \Vert w_{\tilde{\theta}} Q(0)\Vert_{L^\infty_\xi} + \eqref{aprior_deri}_* \Big\}}{w_{\tilde{\theta}}(\xi)\alpha(x,\xi)}.   \label{K_123456_bdd}
\end{align}
In the last line, first we applied \eqref{k_theta_bdd}, then we applied Lemma \ref{lemma:NLN}.

We focus on the contribution of the $Kg$ in~\eqref{p_x_4}, which induces a double Duhamel formula. Denote $y=x-(\xi_1+u)(t-s)$, this formula equals 
\begin{align}
    & \int^t_{\max\{0,\mathbf{1}_{\xi_1+u>0} (t-\frac{x}{\xi_1+u})\}} \dd s e^{-\bar{\nu}(\xi)(t-s)} \int_{\mathbb{R}^3} \dd \xi' \mathbf{k}(\xi,\xi') \notag\\
    & \times \int^s_{\max\{0,\mathbf{1}_{\xi_1'+u>0}(s-\frac{y}{\xi_1'+u})\}}\dd s' e^{-\bar{\nu}(\xi')(s-s')}  \int_{\mathbb{R}^3} \dd \xi'' \mathbf{k}(\xi',\xi'') \p_x g(y-(\xi'_1+u)(s-s'),\xi''). \label{kk}
\end{align}
We split the $s'$-integral into
\begin{align}
    & \underbrace{\mathbf{1}_{s-s'\leq \e}}_{\eqref{s'_integral}_1} + \underbrace{\mathbf{1}_{s-s'>\e}}_{\eqref{s'_integral}_2}. \label{s'_integral}
\end{align}

The contribution of \eqref{s'_integral}$_1$ in~\eqref{kk} is bounded as
\begin{align}
    &\frac{1}{w_{\tilde{\theta}}(\xi)}\int^t_{\max\{0,\mathbf{1}_{\xi_1+u>0} (t-\frac{x}{\xi_1+u})\}} \dd s e^{-\nu_0(\xi)(t-s)/2} \int_{\mathbb{R}^3} \dd \xi' \mathbf{k}(\xi,\xi')\frac{w_{\tilde{\theta}}(\xi)}{w_{\tilde{\theta}}(\xi')}  \notag\\
    &\times  \int^s_{s-\e} \dd s' e^{-\nu_0(\xi')(s-s')/2} \int_{\mathbb{R}^3} \dd \xi'' \mathbf{k}(\xi',\xi'') \frac{w_{\tilde{\theta}}(\xi')\Vert w_{\tilde{\theta}}\alpha\p_x g\Vert_{L^\infty_{x,\xi}}}{w_{\tilde{\theta}}(\xi'')\alpha(y-(\xi'_1+u)(s-s'),\xi'')} \notag\\
    & \lesssim \frac{1}{w_{\tilde{\theta}}(\xi)}\int^t_{\max\{0,\mathbf{1}_{\xi_1+u>0} (t-\frac{x}{\xi_1+u})\}} \dd s e^{-\nu_0(\xi)(t-s)/2} \int_{\mathbb{R}^3} \dd \xi' \mathbf{k}(\xi,\xi')\frac{w_{\tilde{\theta}}(\xi)}{w_{\tilde{\theta}}(\xi')} \frac{[\sqrt{\e}+\e \ln(t)]\Vert w_{\tilde{\theta}}\alpha \p_x g\Vert_{L^\infty_{x,\xi}}}{\alpha(y,\xi')} \notag\\
    & \lesssim \frac{t[\sqrt{\e}+\e \ln(t)]\Vert w_{\tilde{\theta}} \alpha \p_x g\Vert_{\lfty}}{w_{\tilde{\theta}}(\xi) \alpha(x,\xi)} \lesssim \frac{1}{t}\frac{\Vert w_{\tilde{\theta}}\alpha \p_x g\Vert_{L^\infty_{x,\xi}}}{w_{\tilde{\theta}}(\xi) \alpha(x,\xi)}. \label{kk_leq_e_bdd}
\end{align}
In the third line we applied~\eqref{nln_e} in Lemma \ref{lemma:NLN} with \eqref{k_theta_bdd} for the $\dd s'$ integral. In the last line, we applied \eqref{nln_large} with \eqref{k_theta_bdd} for the $\dd s$ integral. In the last inequality we take $\e = \frac{1}{t^4}$, which is small since $t \gg 1$, and that
\begin{equation}\label{e_t}
\sqrt{\e} = \frac{1}{t^2}, \ \ t[\sqrt{\e}+\e \ln(t)] \lesssim \frac{1}{t} + \frac{\ln t}{t^3} \lesssim \frac{1}{t} . 
\end{equation}

For the contribution of \eqref{s'_integral}$_2$ in~\eqref{kk}, without loss of generality, we assume $s>\e$. Otherwise, we bound \eqref{s'_integral}$_2$ by \eqref{kk_leq_e_bdd}. Then we observe the following chain rule:
\begin{align}
    & \frac{\p_{\xi'_1} [g(y-(\xi'_1+u)(s-s'),\xi'')]}{-(s-s')}    =  \p_x g(y-(\xi'_1+u)(s-s'),\xi''). \label{chain_rule}
\end{align}
In such case $s-s'\leq \e$, we only integrate over the $\xi'$ such that $\frac{y}{\xi_1'+u}>\e$, we denote
\begin{align*}
    & V:  = \{\xi'\in \mathbb{R}^3: \frac{y}{\xi'_1+u} > \e\} , \  \tilde{V}: = \{\xi'\in \mathbb{R}^3: \frac{y}{\xi'_1+u} = \e\}.
\end{align*}
We apply \eqref{chain_rule} and an integration by part for $\dd \xi'_1$ to compute this contribution as
\begin{align}
    & \int^t_{\max\{0,\mathbf{1}_{\xi_1+u>0} (t-\frac{x}{\xi_1+u})\}} \dd s e^{-\bar{\nu}(\xi)(t-s)} \int_{V} \dd \xi' \mathbf{k}(\xi,\xi')  \notag\\
    &\times \int^{s-\e}_{\max\{0,\mathbf{1}_{\xi_1'+u>0}(s-\frac{y}{\xi_1'+u})\}}\dd s' e^{-\bar{\nu}(\xi')(s-s')}  \int_{\mathbb{R}^3} \dd \xi'' \mathbf{k}(\xi',\xi'') \frac{\p_{\xi'_1} [g(y-(\xi'_1+u)(s-s'),\xi'')]}{-(s-s')} \notag\\
    & = \int^t_{\max\{0,\mathbf{1}_{\xi_1+u>0} (t-\frac{x}{\xi_1+u})\}} \dd s e^{-\bar{\nu}(\xi)(t-s)} \int_{V} \dd \xi' \int^{s-\e}_{\max\{0,\mathbf{1}_{\xi_1'+u>0}(s-\frac{y}{\xi_1'+u})\}}\dd s' \notag\\
    &\times    
  \p_{\xi'_1} [\mathbf{k}(\xi,\xi') \mathbf{k}(\xi',\xi'') e^{-\bar{\nu}(\xi')(s-s')} ]    \int_{\mathbb{R}^3} \dd \xi''  \frac{g(y-(\xi'_1+u)(s-s'),\xi'')}{-(s-s')}  \label{kk_geq_e_1}\\
    & +  \int^t_{\max\{0,\mathbf{1}_{\xi_1+u>0} (t-\frac{x}{\xi_1+u})\}} \dd s e^{-\bar{\nu}(\xi)(t-s)} \int_{V} \dd \xi' \mathbf{k}(\xi,\xi') \notag\\
    & \times \p_{\xi'_1} [\frac{y}{\xi_1'+u}] e^{-\frac{\bar{\nu}(\xi')y}{(\xi'_1+u)} } \int_{\mathbb{R}^3} \dd \xi'' \mathbf{k}(\xi',\xi'')  \frac{g(0,\xi'')}{y/(\xi_1'+u)} \label{kk_geq_e_2} \\
    & + \int^t_{\max\{0,\mathbf{1}_{\xi_1+u>0} (t-\frac{x}{\xi_1+u})\}}   \dd s e^{-\bar{\nu}(\xi)(t-s)} \int_{\tilde{V}} \dd \xi' \mathbf{k}(\xi,\xi') \notag \\
    &\times \int^{s-\e}_{s-\e}\dd s' e^{-\bar{\nu}(\xi')(s-s')}  \int_{\mathbb{R}^3} \dd \xi'' \mathbf{k}(\xi',\xi'') \frac{g(y-(\xi'_1+u)(s-s'),\xi'')}{-(s-s')}. \label{kk_geq_e_3}
\end{align}
For the last term, we apply $\Vert wg\Vert_{L^\infty_{x,\xi}}<\infty$ to have $\eqref{kk_geq_e_3} = 0$.

For $\p_{\xi'_1}[\mathbf{k}(\xi,\xi')\mathbf{k}(\xi',\xi'')]$ in~\eqref{kk_geq_e_1}, we use $2\tilde{\theta}<\theta$ to have
\begin{align}
    &    \frac{1}{w_{\tilde{\theta}}(\xi)}\int^t_{\max\{0,\mathbf{1}_{\xi_1+u>0} (t-\frac{x}{\xi_1+u})\}} \dd s e^{-\nu_0(\xi)(t-s)/2} \int_{\mathbb{R}^3} \dd \xi'   \int^s_{\max\{0,\mathbf{1}_{\xi_1'+u>0}(s-\frac{y}{\xi_1'+u})\}}\dd s' e^{-\nu_0(\xi')(s-s')/2} \notag \\
    &\times   \frac{w_{\tilde{\theta}}(\xi)}{w_{2\tilde{\theta}}(\xi')}    \int_{\mathbb{R}^3} \dd \xi''  \Big|\p_{\xi'_1} \mathbf{k}(\xi,\xi') \mathbf{k}(\xi',\xi'') + \mathbf{k}(\xi,\xi') \p_{\xi'_1}\mathbf{k}(\xi',\xi'') \Big|
       \frac{w_{2\tilde{\theta}}(\xi')}{w_{2\tilde{\theta}}(\xi'')} \frac{\Vert wg\Vert_{L^\infty_{x,\xi}}}{\e}  \notag\\
    &\lesssim \frac{\Vert wg\Vert_{L^\infty_{x,\xi}}}{\e w_{\tilde{\theta}}(\xi)} \lesssim t^4 \frac{\Vert wg\Vert_{L^\infty_{x,\xi}}}{w_{\tilde{\theta}}(\xi) \alpha(x,\xi)}. \label{kk_geq_e_1_bdd}
\end{align}
In the last inequality we applied~\eqref{alpha_bdd} and~\eqref{e_t}. To get the first inequality in the last line, we have applied \eqref{k_theta} and \eqref{nabla_k_theta} to have
\begin{align*}
    & \frac{w_{\tilde{\theta}}(\xi)}{w_{2\tilde{\theta}}(\xi')}      \Big|\p_{\xi'_1} \mathbf{k}(\xi,\xi') \mathbf{k}(\xi',\xi'') + \mathbf{k}(\xi,\xi') \p_{\xi'_1}\mathbf{k}(\xi',\xi'') \Big|    \frac{w_{2\tilde{\theta}}(\xi')}{w_{2\tilde{\theta}}(\xi'')}          \\
    & \lesssim \frac{[1+|\xi|']^2}{w_{\tilde{\theta}}(\xi')} \Big| \frac{e^{-C_\theta|\xi-\xi'|^2}}{|\xi-\xi'|^2} \frac{e^{-C_\theta|\xi'-\xi''|^2}}{|\xi'-\xi''|} + \frac{e^{-C_\theta|\xi-\xi'|^2}}{|\xi-\xi'|} \frac{e^{-C_\theta|\xi'-\xi''|^2}}{|\xi'-\xi''|^2} \Big| \in L^1(\dd \xi' \dd \xi'').
\end{align*}

For the other term $\p_{\xi'_1}e^{-\bar{\nu}(\xi')(s-s')}$ in~\eqref{kk_geq_e_1}, recall the definition of $\bar{\nu}$ in \eqref{nu_bar}, we apply \eqref{nablav nu} to bound it by \eqref{kk_geq_e_1_bdd} using the same computation.

Then we compute~\eqref{kk_geq_e_2}. The $\xi_1'$ derivative on $y/(\xi_1'+u)$ can be combined with the extra $1/(y/(\xi_1'+u))$, we apply~\eqref{alpha_bdr} and Lemma \ref{lemma:velocity_alpha} to have
\begin{align*}
    &  \Big|\p_{\xi_1'}[\frac{y}{\xi'_1+u}] \frac{\xi_1'+u}{y} \Big| =  \frac{1}{|\xi_1'+u|} \leq  \frac{1}{\alpha(0,\xi')}\lesssim \frac{e^{ \frac{\nu_0 y}{4(\xi_1'+u)}}}{\alpha(y,\xi')}.
\end{align*}
Then we proceed the computation as
\begin{align}
  |\eqref{kk_geq_e_2}|  &  \lesssim \frac{1}{w_{\tilde{\theta}}(\xi)}\int^t_{\max\{0,\mathbf{1}_{\xi_1+u>0} (t-\frac{x}{\xi_1+u})\}} \dd s e^{-\nu_0(\xi)(t-s)/2} \int_{\mathbb{R}^3} \dd \xi' \mathbf{k}(\xi,\xi') \frac{w_{\tilde{\theta}}(\xi)}{w_{\tilde{\theta}}(\xi')}\notag \\
  &\times e^{-\frac{\nu_0 y}{4(\xi_1'+u)}}  \frac{\Vert wg\Vert_{L^\infty_{x,\xi}}}{\alpha(y,\xi')} \int_{\mathbb{R}^3} \dd \xi'' \mathbf{k}(\xi',\xi'') \frac{w_{\tilde{\theta}}(\xi')}{w_{\tilde{\theta}}(\xi'')} \lesssim  \frac{t \Vert wg\Vert_{L^\infty_{x,\xi}}}{w_{\tilde{\theta}}(\xi) \alpha(x,\xi)}. \label{kk_geq_e_3_bdd}
\end{align}
In the last inequality we applied \eqref{nln_large} in Lemma \ref{lemma:NLN} with \eqref{k_theta_bdd}.

Collecting~\eqref{kk_leq_e_bdd}, \eqref{kk_geq_e_1_bdd} and~\eqref{kk_geq_e_3_bdd} we conclude
\begin{align*}
  |\eqref{kk}|  &  \lesssim \frac{\frac{1}{t}\Vert w_{\tilde{\theta}}\alpha\p_x  g\Vert_{L^\infty_{x,\xi}} + t^4[\Vert wg_b\Vert_{L^\infty_{\xi}} +\Vert wg\Vert_{L^\infty_{x,\xi}}]}{w_{\tilde{\theta}}(\xi)\alpha(x,\xi)}. 
\end{align*}
This, together with~\eqref{K_123456_bdd}, leads to the estimate 
\begin{align}
  \eqref{p_x_4}  & \lesssim  \frac{[\frac{1}{t}+te^{-\nu_0 t/4} + t^2\gamma]\Vert w_{\tilde{\theta}}\alpha\p_x g\Vert_{L^\infty_{x,\xi}} + t^4[\Vert wg_b\Vert_{L^\infty_\xi} + \Vert wg\Vert_{L^\infty_{x,\xi}} + \Vert w_{\tilde{\theta}}Q(0)\Vert_{L^\infty_\xi} + \eqref{aprior_deri}_*]}{w_{\tilde{\theta}}(\xi)\alpha(x,\xi)}  . \label{4_bdd}
\end{align}

\eqref{123_bdd} and~\eqref{4_bdd} leads to the estimate for $\p_x g(x,\xi)$:
\begin{align*}
  |\p_x g(x,\xi)|  & \lesssim  \eqref{4_bdd} .
\end{align*}
Applying~\eqref{t_large} to a fixed $t$, we choose a smaller $\gamma$ in \eqref{pen_lin_op} such that $t^2\gamma \ll 1$, we conclude the lemma by
\begin{align*}
  \Vert w_{\tilde{\theta}}\alpha \p_x g\Vert_{L^\infty_{x,\xi}}  & \lesssim t^4[\Vert wg_b\Vert_{L^\infty_\xi} + \Vert wg\Vert_{L^\infty_{x,\xi}} + \Vert w_{\tilde{\theta}}Q(0)\Vert_{L^\infty_\xi} + \eqref{aprior_deri}_*].
\end{align*}

\end{proof}

In the following lemma, we construct the derivative to the linearized penalized problem.
\begin{lemma}\label{lemma:linear_boltz_derivative}
Suppose $\p_x Q$ exists and satisfies $\eqref{aprior_deri}_*<\infty$, $\Vert w_{\tilde{\theta}}Q\Vert_{L^\infty_{x,\xi}}<\infty$, then the derivative of the unique solution to linearized penalized problem in \eqref{pen_prob} exists, and it is given by
\begin{align}
    (\xi_1 + u)\p_x (\p_x g) + \mathcal{L}^p (\p_x g) = \p_x Q. \notag
\end{align}
Moreover, the derivative satisfies \eqref{aprior_deri}.

\end{lemma}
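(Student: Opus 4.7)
The plan is to construct $\p_x g$ by a Picard iteration in the weighted $C^1$ class
\[
\mathcal{Y} := \{h \in C(\mathbb{R}^+ \times \mathbb{R}^3 \setminus \mathcal{D}) : \Vert w_{\tilde{\theta}}\alpha h\Vert_{\lfty} < \infty\},
\]
and then to identify the fixed point with the pointwise $x$-derivative of $g$. Starting from $h^0 \equiv 0$, define $h^{n+1}$ inductively as the classical solution of the damped transport equation
\[
(\xi_1+u)\p_x h^{n+1} + \bar{\nu}(\xi) h^{n+1} = K^p h^n + \p_x Q, \qquad h^{n+1}(0,\xi) = \frac{Q(0,\xi) - \mathcal{L}^p g(0,\xi)}{\xi_1+u} \text{ for } \xi_1+u>0,
\]
with $\bar{\nu}$ as in \eqref{nu_bar}. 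This boundary datum is the one forced by evaluating the equation for $g$ at $x=0$ and $\xi_1+u>0$. Existence of $h^{n+1}$ and of its classical $x$-derivative follow from Lemma \ref{lemma:weak_deri}, once one verifies inductively that $K^p h^n + \p_x Q$ has a pointwise $x$-derivative with $\p_x K^p h^n = K^p \p_x h^n$; this interchange is justified by dominated convergence, using $\Vert w_{\tilde{\theta}}\alpha h^n\Vert_{\lfty} < \infty$ and \eqref{k_theta_bdd}.

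Because $h^{n+1}$ admits a Duhamel representation of exactly the form \eqref{p_x_1}--\eqref{p_x_6}, with $\p_x g$ inside the $K^p$ integral of \eqref{p_x_4} replaced by $h^n$, every step of the proof of Lemma \ref{lemma:aprori_linear} applies verbatim, yielding
\[
\Vert w_{\tilde{\theta}}\alpha h^{n+1}\Vert_{\lfty} \leq C\,\mathcal{E} + C_\star \Vert w_{\tilde{\theta}}\alpha h^n\Vert_{\lfty},
\]
where $\mathcal{E}$ stands for the right-hand side of \eqref{aprior_deri} and $C_\star = C(1/t + t e^{-\nu_0 t/4} + t^2\gamma)$. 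For the same choice of $t$ large and $\gamma$ small used in Lemma \ref{lemma:aprori_linear}, one has $C_\star \leq 1/2$. Since $h^{n+1}-h^n$ solves the same damped transport equation with zero source and zero boundary data, the $\mathcal{E}$-term drops out and $\{h^n\}$ is Cauchy in $\mathcal{Y}$; let $h$ denote its limit. Passing to the limit in the Duhamel identity shows that $h$ solves $(\xi_1+u)\p_x h + \mathcal{L}^p h = \p_x Q$ and satisfies \eqref{aprior_deri}.

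The main obstacle is the identification $h = \p_x g$. For $\xi_1+u \neq 0$, the equation for $g$ combined with the continuity of $K^p g, Q, g$ from Proposition \ref{prop:pen_continuous} yields the classical formula $\p_x g = (Q + K^p g - \bar{\nu} g)/(\xi_1+u)$ pointwise off $\mathcal{D}$. Differentiating the equation for $g$ (with $\p_x K^p g = K^p \p_x g$ justified by dominated convergence away from $\mathcal{D}$) shows that $\p_x g$ solves the same linear damped penalized problem as $h$, with the same boundary datum on $\{\xi_1+u>0\}$ and with decay at $x=+\infty$ inherited from $\Vert wg\Vert_{\lfty} \leq C\e e^{-\gamma x}$. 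The difference $h - \p_x g$ therefore solves the homogeneous problem \eqref{pen_prob} with zero boundary data and finite $L^2_{x,\xi}$ norm; uniqueness in Proposition \ref{prop:pen_well_pose} forces it to vanish. Hence $h = \p_x g$ off $\mathcal{D}$, continuity of $h$ together with Lemma \ref{lemma:weak_deri} extends the identification to $\mathcal{D}$, and the bound \eqref{aprior_deri} on $h$ transfers to $\p_x g$, completing the proof.
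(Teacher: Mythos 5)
Your plan — a Picard iteration on a candidate derivative $h$, closing in the $\alpha$-weighted $L^\infty$ class, then identifying $h = \p_x g$ — differs from the paper's proof, which runs a $\lambda$-homotopy on $K^p$ in the Banach space $\mathcal{X}_2 = \{g : \Vert w_{\tilde\theta}\alpha\p_x g\Vert_{\lfty} + \Vert g\Vert_{\mathcal{X}} < \infty\}$, solving simultaneously for $g$ and $\p_x g$ at each $\lambda$-step. Unfortunately, your route has a gap that your write-up papers over.

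The crux is the claimed bound $\Vert w_{\tilde\theta}\alpha h^{n+1}\Vert_{\lfty} \le C\mathcal{E} + C_\star\Vert w_{\tilde\theta}\alpha h^n\Vert_{\lfty}$ with $C_\star = C(1/t + te^{-\nu_0 t/4} + t^2\gamma) \le 1/2$. In Lemma \ref{lemma:aprori_linear} the factor $1/t$ in front of $\Vert w_{\tilde\theta}\alpha\p_x g\Vert$ is \emph{not} what a single Duhamel on the term \eqref{p_x_4} gives; Lemma \ref{lemma:NLN} applied once over $\dd s$ produces a factor $t$, not $1/t$. The factor $1/t$ only emerges after the double Duhamel \eqref{kk} and its split into $s-s'\le\e$ and $s-s'>\e$, and both halves of that split rely on the innermost function being $\p_x g$: the $s-s'\le\e$ piece requires that the function in \eqref{p_x_4} itself admit the same Duhamel representation \eqref{p_x_1}--\eqref{p_x_6}, so it can be expanded once more; and — more fundamentally — the $s-s'>\e$ piece uses \eqref{chain_rule} to rewrite $\p_x g$ as $-\p_{\xi_1'}[g]/(s-s')$ and then integrates by parts in $\xi_1'$, controlling the boundary terms by $\Vert wg\Vert_{\lfty}$. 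Your iterate $h^n$ is not the $x$-derivative of any function whose $w$-weighted $L^\infty$ norm you control, so there is no analogue of \eqref{chain_rule} and no primitive to integrate by parts onto. Without that, the contribution of $K^p h^n$ to $h^{n+1}$ is bounded only by $\lesssim t\,\Vert w_{\tilde\theta}\alpha h^n\Vert_{\lfty}$ with $t \gg 1$, which is not a contraction. (A minor but telling symptom of this: you write that $h^{n+1}-h^n$ solves the damped transport with ``zero source,'' but in fact the source is $K^p(h^n - h^{n-1})$, and it is precisely this term that has no smallness.)

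The paper sidesteps this by never facing the full $K^p(\p_x g)$ in the construction phase. At each step of the $\lambda$-homotopy the derivative equation reads $(\xi_1+u)\p_x(\p_x g) + \bar\nu\,\p_x g = \lambda K^p(\p_x\tilde g) + \p_x Q$ with $\tilde g$ frozen, so $K^p$ is part of the given source and \eqref{p_x_4} is vacuous; the contraction comes from the small coefficient $\lambda$ in \eqref{kp_deri_bdd}, not from the double-Duhamel gain, which is reserved for the genuine a-priori estimate in Lemma \ref{lemma:aprori_linear}. A second, lesser issue in your identification step: you invoke $L^2_{x,\xi}$-uniqueness from Proposition \ref{prop:pen_well_pose} for $h - \p_x g$, but the $\alpha$-weighted bound only yields $\p_x g \sim 1/\alpha$ near the grazing set, which is in $L^p$ for $p<2$ but not $L^2$ (this is exactly why Theorem \ref{thm:weight_C1} stops at $p<2$); so $h - \p_x g \in L^2_{x,\xi}$ is not established. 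The paper avoids this by building $g$ and $\p_x g$ together in $\mathcal{X}_2$, whose norm includes $\Vert g\Vert_{L^2_{x,\xi}}$, so the constructed $g$ is automatically the $L^2$-unique one of Proposition \ref{prop:pen_well_pose}; no separate identification is needed. To repair your argument you would, in effect, have to iterate on a sequence $g^n$ whose derivative is $h^n$ and track $\Vert wg^n\Vert_{\lfty}$ alongside, which brings you back to the paper's $\lambda$-homotopy in $\mathcal{X}_2$.
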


\begin{proof}
We start by considering the derivative to the following equation:
\begin{equation}\label{eqn_g}
\begin{cases}
    & (\xi_1+u)\p_x g + [\nu(\xi) - \gamma (\xi_1+u)]g - \lambda K^p \tilde{g} = Q  ,\\
    & g(0,\xi) = g_b(\xi) , \ \ \xi_1+u>0.
\end{cases}
\end{equation}
\begin{align}
    & (\xi_1+u)\p_x (\p_x g) + [\nu(\xi) - \gamma (\xi_1+u)](\p_x g) - \lambda K^p (\p_x \tilde{g}) = \p_x Q.  \label{eqn_p_x_g}
\end{align}

We define a Banach space as
\begin{align}
   \mathcal{X}_2: = & \Big\{ g(x,\xi):    \Vert w_{\tilde{\theta}} \alpha \p_x g\Vert_{L^\infty_{x,\xi}}< \infty, \ \ \Vert w g\Vert_{L^\infty_{x,\xi}} < \infty,  \ \ \Vert g\Vert_{L^2_{x,\xi}} < \infty \Big \},  \notag
\end{align}
with associated norm
\begin{align}
  \Vert g\Vert_{\mathcal{X}_2}  &:= \Vert w_{\tilde{\theta}} \alpha \p_x g\Vert_{L^\infty_{x,\xi}}   +  \Vert g\Vert_{\mathcal{X}}  .  \notag
\end{align}
Here $\Vert g\Vert_{\mathcal{X}}$ is defined in \eqref{banach}.

For given $\tilde{g} \in \mathcal{X}_2 $, from Proposition \ref{prop:pen_continuous} there exists a unique solution $g$ to \eqref{eqn_g}. By Lemma \ref{lemma:weak_deri}, the derivative of $g$ exists and is given by \eqref{eqn_p_x_g}. 

We first prove that $\Vert w_{\tilde{\theta}} \alpha\p_x g \Vert_{L^\infty_{x,\xi}}<\infty$ so that we can apply the a priori estimate \eqref{aprior_deri}. From the proof of Lemma \ref{lemma:aprori_linear}, $\p_x g$ can be expressed by \eqref{p_x_1} - \eqref{p_x_6} with replacing $K^p g$ by $\lambda K^p (\tilde{g})$. \hide Then all given sources are bounded as $w_{\tilde{\theta}}(\xi)\alpha(x,\xi) \times [\eqref{p_x_2} + \cdots + \eqref{p_x_6}]_{Kg\to \lambda K\tilde{g}} < \infty$. \unhide

For \eqref{p_x_1}, when $\xi_1+u=0$, \eqref{p_x_1} can be absorbed by LHS. When $\xi_1+u\neq 0$, we have a uniform-in-$x$ bound for $\p_x g$ by the equation of $g$ in \eqref{pen_prob}:
\begin{align*}
    & \Vert w_{\tilde{\theta}}(\xi)\p_x g(x,\xi) \Vert_{L^\infty_x} \leq \frac{\Vert wg\Vert_{\lfty} + \Vert w_{\tilde{\theta}}Q\Vert_{\lfty} + \Vert w_{\tilde{\theta}} K^p (\tilde{g})\Vert_{\lfty}}{|\xi_1+u|}   \\
    & \lesssim \frac{\Vert wg\Vert_{\lfty} + \Vert w_{\tilde{\theta}}Q\Vert_{\lfty} + \Vert w\tilde{g}\Vert_{\lfty}}{|\xi_1+u|}.
\end{align*}
Here we have applied Lemma \ref{lemma:Kp}. From the uniform-in-$x$ bound, we can multiply $w_{\tilde{\theta}}(\xi)$ to \eqref{p_x_1} - \eqref{p_x_6}, and take $sup$ in $x$ to the LHS, so that $w_{\tilde{\theta}}(\xi)\times \eqref{p_x_1}$ can be absorbed by the LHS.

Since $\alpha(x,\xi)\leq 1$, we further multiply $\alpha(x,\xi)$ to \eqref{p_x_1} - \eqref{p_x_6}, and $\alpha(x,\xi)w_{\tilde{\theta}}(\xi)\times \eqref{p_x_1}$ can also be absorbed by the LHS. 

Without the contribution of $K^p g$ in \eqref{eqn_g}, we can apply the same argument as the a priori estimate in Lemma \ref{lemma:aprori_linear}, here we do not need to estimate \eqref{p_x_1}, \eqref{p_x_3} and \eqref{p_x_4}. Then we derive that \eqref{eqn_p_x_g} can be bounded by the same estimate as \eqref{aprior_deri}. Hence, for some $C_\alpha$,
\begin{align}
  \Vert w_{\tilde{\theta}} \alpha \p_x g\Vert_{L^\infty_{x,\xi}}  & \leq C_\alpha \Big[ \Vert w g\Vert_{L^\infty_{x,\xi}}  +  \Vert w_{\tilde{\theta}}Q(0)\Vert_{L^\infty_\xi} + \eqref{aprior_deri}_* + \eqref{kp_deri_bdd} \Big],    \label{p_x_g_0}
\end{align}
where \eqref{kp_deri_bdd} corresponds to the contribution of $\lambda K^p (\p_x \tilde{g})$:
\begin{align}
    & \lambda w_{\tilde{\theta}}(\xi) \Big[ K^p \tilde{g}(0,\xi) +\alpha(x,\xi)\int^t_{\max\{0,\mathbf{1}_{\xi_1+u>0}(t-\frac{x}{\xi_1+u})\}} \dd s e^{-\nu(\xi)(t-s)} K^p \p_x \tilde{g}(x-(\xi_1+u)(t-s),\xi)   \dd s\Big] \label{kp_deri_bdd}\\
    & \lesssim \lambda  \Vert w \tilde{g}\Vert_{L^\infty_{x,\xi}} + \lambda \gamma t\Vert w_{\tilde{\theta}} \alpha \p_x \tilde{g}\Vert_{L^\infty_{x,\xi}} \notag \\
    & + \lambda \Vert w_{\tilde{\theta}}\alpha \p_x \tilde{g}\Vert_{L^\infty_{x,\xi}} \alpha(x,\xi)\int^t_{\max\{0,\mathbf{1}_{\xi_1+u>0}(t-\frac{x}{\xi_1+u})\}} \dd s e^{-\nu(\xi)(t-s)} \int_{\mathbb{R}^3}\dd \xi' \frac{w_{\tilde{\theta}}(\xi)\mathbf{k}(\xi,\xi')}{w_{\tilde{\theta}}(\xi')\alpha(x-(\xi_1+u)(t-s),\xi')} \notag \\
    &\lesssim \lambda \gamma \Vert w \tilde{g}\Vert_{L^\infty_{x,\xi}} + \lambda \gamma t\Vert w_{\tilde{\theta}} \alpha \p_x \tilde{g}\Vert_{L^\infty_{x,\xi}} + \lambda t\Vert w_{\tilde{\theta}}\alpha \p_x \tilde{g}\Vert_{L^\infty_{x,\xi}}. \label{kp_deri_bdd2}
\end{align}
In the second line we have applied Lemma \ref{lemma:Kp} to $K^p \tilde{g}$ for the first term, for the second term, we applied the same computation as \eqref{K_other_bdd} to compute the contribution of $\gamma \prod_+ ((\xi_1+u)g) + \gamma \mathbf{p}_u ((\xi_1+u)g)$. In the last line we have used Lemma \ref{lemma:NLN} with \eqref{k_theta_bdd}.

Combining \eqref{p_x_g_0}, \eqref{kp_deri_bdd2}, $\Vert g\Vert_{\mathcal{X}}<\infty$ from \eqref{chi_bdd} and the assumption that $\eqref{aprior_deri}_*<\infty$, we conclude that $g \in \mathcal{X}_2$.

Next, for given $\tilde{g}_1,\tilde{g}_2$, we denote the associated solution as $g_1,g_2$. Then we apply \eqref{p_x_g_0}, \eqref{kp_deri_bdd2} and \eqref{contraction_X} to have
\begin{align}
  &\Vert g_1-g_2\Vert_{\mathcal{X}_2}   = \Vert w_{\tilde{\theta}}\alpha \p_x (g_1-g_2)\Vert_{L^\infty_{x,\xi}}   +   \Vert g_1-g_2\Vert_{\mathcal{X}} \notag \\
  & \leq C_\alpha \Vert w (g_1-g_2)\Vert_{L^\infty_{x,\xi}}+ [\lambda \gamma t+ \lambda t ] C_\alpha \Vert w_{\tilde{\theta}}\alpha \p_x (\tilde{g}_1-\tilde{g}_2)\Vert_{L_{x,\xi}^\infty} + C_\alpha \lambda \gamma\Vert w(\tilde{g}_1-\tilde{g}_2) \Vert_{L^\infty_{x,\xi}}+ C \lambda \Vert \tilde{g}_1-\tilde{g}_2\Vert_{\mathcal{X}} \notag \\
  & \leq  C_\alpha \lambda [\gamma t+ t] \Vert w_{\tilde{\theta}} \alpha \p_x (\tilde{g}_1 - \tilde{g}_2)\Vert_{L^\infty_{x,\xi}} + C_\alpha \Vert g_1-g_2\Vert_{\mathcal{X}} + C_\alpha \lambda[\gamma + C] \Vert \tilde{g}_1-\tilde{g}_2 \Vert_{\mathcal{X}} \notag \\
  & \leq C_\alpha \lambda [\gamma t+ t] \Vert w_{\tilde{\theta}} \alpha \p_x (\tilde{g}_1 - \tilde{g}_2)\Vert_{L^\infty_{x,\xi}} + C_\alpha \lambda[\gamma + 2C] \Vert \tilde{g}_1-\tilde{g}_2 \Vert_{\mathcal{X}} .\notag
\end{align}
Then we choose $\lambda$ to be sufficiently small such that all coefficients are bounded as
\begin{align*}
    & C_\alpha \lambda [\gamma t+ t]< 1 \ \ \ C_\alpha \lambda[\gamma + 2C]< 1.
\end{align*}
By Banach fixed point theorem, there exists a unique $g\in \mathcal{X}_2$ satisfying
\begin{equation}
\begin{cases}
    & (\xi_1+u)\p_x g + [\nu(\xi) - \gamma (\xi_1+u)]g - \lambda K^p g = Q , \notag\\
    & g(0,\xi) = g_b(\xi) , \ \ \xi_1+u>0, \notag\\
    & (\xi_1+u)\p_x (\p_x g) + [\nu(\xi) - \gamma (\xi_1+u)](\p_x g) - \lambda K^p (\p_x g) = \p_x Q.  \label{linear_p_x_g}
\end{cases}
\end{equation}

From the well-posedness of \eqref{linear_p_x_g}, next we consider the derivative to the solution of the following problem,
\begin{equation}
\begin{cases}
    & (\xi_1+u)\p_x g + [\nu(\xi) - \gamma (\xi_1+u)]g - \lambda K^p g = \lambda K^p (\tilde{g}) + Q , \notag\\
    & g(0,\xi) = g_b(\xi) , \ \ \xi_1+u>0, \notag\\
    & (\xi_1+u)\p_x (\p_x g) + [\nu(\xi) - \gamma (\xi_1+u)](\p_x g) - \lambda K^p (\p_x g) = \lambda K^p(\p_x \tilde{g}) +  \p_x Q.  \notag
\end{cases}
\end{equation}
With given source $\tilde{g}\in \mathcal{X}_2$, from previous argument we know that $\p_x g$ exists and $g\in \mathcal{X}_2$. It is straightforward to show that one can obtain the same uniform in $\lambda$ a-priori estimate \eqref{aprior_deri}. Thus \eqref{p_x_g_0} still holds. Then we apply the same fixed-point argument to show that there exists a unique $g$ satisfying
\begin{equation}
\begin{cases}
    & (\xi_1+u)\p_x g + [\nu(\xi) - \gamma (\xi_1+u)]g - 2\lambda K^p g =  Q  \notag\\
    & g(0,\xi) = g_b(\xi) , \ \ \xi_1+u>0, \notag\\
    & (\xi_1+u)\p_x (\p_x g) + [\nu(\xi) - \gamma (\xi_1+u)](\p_x g) - 2\lambda K^p (\p_x g) =   \p_x Q.  \notag
\end{cases}
\end{equation}
Step by step, we construct solution with coefficient $1$ for $K^p (\p_x g)$, then we conclude the lemma.

\end{proof}

\ \\

\subsection{Weighted $C^1$ estimate of nonlinear penalized problem}

Next, we construct the derivative to the nonlinear problem \eqref{pen_nonlin_prob}.

\begin{proposition}
The derivative to the solution of \eqref{pen_nonlin_prob} in Proposition \ref{prop:nonlinear_continuous} exists. Moreover, we have the weighted $C^1$ estimate
\begin{align}
 \Vert \alpha\p_x h\Vert_{L^\infty_{x,\xi}} + \Vert w_{\tilde{\theta}}\alpha \p_x g\Vert_{L^\infty_{x,\xi}}  & \leq C\Vert w g\Vert_{L^\infty_{x,\xi}} +   C\Vert w g_b\Vert_{L^\infty_{\xi}} + C\Vert h\Vert_{L^\infty_{x}}.  \label{penalized_C1}
\end{align}

\end{proposition}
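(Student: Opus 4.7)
The plan is to run a Banach fixed-point argument in a space that augments the $\mathcal{X}_1$ from Proposition \ref{prop:nonlinear_continuous} with the weighted $C^1$ information captured in $\mathcal{X}_2$. Specifically, I would define
\[
\mathcal{X}_3 := \big\{(g,h)\in\mathcal{X}_1 :\ \Vert w_{\tilde\theta}\alpha\p_x g\Vert_{L^\infty_{x,\xi}}+\Vert\alpha\p_x h\Vert_{L^\infty_{x,\xi}}\le 2C_*\e\big\},
\]
with the obvious sum norm and $C_*$ to be fixed, and send $(\tilde g,\tilde h)\mapsto(g,h)$ via the linearized system \eqref{pen_nonlin_prob_linear}. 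The $\mathcal{X}_1$ part is already controlled by Proposition \ref{prop:nonlinear_continuous}; what remains is to upgrade this to a $C^1$ estimate and close the fixed point.

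For the component $\p_x g$, the idea is to apply Lemma \ref{lemma:linear_boltz_derivative} to the source $Q=e^{-\gamma x}(\mathbf{I}-\mathbf{P}_u)\Gamma(\tilde g-\tilde h\phi_u,\tilde g-\tilde h\phi_u)$ and boundary data $g_b=f_b+\tilde h(0)\phi_u$. The assumption $\Vert w_{\tilde\theta}Q\Vert_{L^\infty_{x,\xi}}<\infty$ follows from \eqref{Gamma bounded} together with the projection bound \eqref{P_u_gamma_infty}. Existence of $\p_x Q$ follows from the product rule, producing $-\gamma Q$ plus two bilinear $\Gamma$ terms in which either argument is replaced by $\p_x\tilde g-\p_x\tilde h\phi_u$. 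Controlling $\eqref{aprior_deri}_*$ is the main calculation: using \eqref{Gamma_est}, $\p_x\Gamma$ decomposes into a pointwise term $\lesssim [1+|\xi|]\Vert w f\Vert_\infty|\p_x g|$ and two convolutional terms against $\mathbf{k}(\xi,\xi')$ of $\p_x f$ or $\p_x g$. The pointwise piece is handled by Lemma \ref{lemma:velocity_alpha} with $c=1/8$, since the growth $\alpha(x,\xi)/\alpha(x-(\xi_1+u)(t-s),\xi)\lesssim e^{2c\nu_0(t-s)}$ is dominated by $e^{-\bar\nu(\xi)(t-s)/2}$. The convolutional pieces fit exactly into the framework of Lemma \ref{lemma:NLN} and Lemma \ref{lemma:NLN_inner}, producing a bound of the form $t\Vert w\tilde g\Vert_\infty\Vert w_{\tilde\theta}\alpha\p_x\tilde g\Vert_\infty/[w_{\tilde\theta}(\xi)\alpha(x,\xi)]$, and the smallness of $\Vert w\tilde g\Vert_\infty\lesssim\e$ absorbs the time-growth factor for any fixed large $t$.

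For the component $\p_x h$, substituting $y=x+z$ in the formula for $h$ and differentiating yields
\[
\p_x h(x)=(\gamma-\tau_u)h(x)+e^{-\gamma x}\big\langle\psi_u\Gamma(\tilde g-\tilde h\phi_u,\tilde g-\tilde h\phi_u)\big\rangle(x).
\]
Combining \eqref{Gamma bounded}, Lemma \ref{Lemma: w_psi_infty}, and the argument used in \eqref{h_bdd} gives $|\p_x h|\lesssim|h|+\e^2\lesssim\e$. Since $\alpha\le 1$ by \eqref{alpha_bdd}, this suffices for the $\Vert\alpha\p_x h\Vert$ bound. Combining this with Lemma \ref{lemma:linear_boltz_derivative} applied with the sources estimated above delivers the bound \eqref{penalized_C1} for the image point $(g,h)$, so that, for $\e$ small and $C_*$ chosen suitably, the map sends $\mathcal{X}_3$ into itself.

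The contraction is obtained by applying the same scheme to the difference $(g_1-g_2,h_1-h_2)$ solving an analogue of \eqref{nonlinear_difference} differentiated in $x$: the nonlinear source $\Sigma$ becomes $\p_x\Sigma$, each factor now involving either a bilinear term in the difference of the fixed-point variables or a $\p_x$ of such difference, and the same chain of estimates produces a contraction constant proportional to $\e$. The Banach fixed-point theorem gives the unique point in $\mathcal{X}_3$, which by uniqueness coincides with the $\mathcal{X}_1$-solution of Proposition \ref{prop:nonlinear_continuous}. The \emph{main obstacle} is the control of the non-local piece of $\p_x\Gamma$ in which $\p_x\tilde g$ sits inside a convolution with $\mathbf{k}(\xi,\xi')$: this is precisely the structure handled by Lemma \ref{lemma:NLN}, and the nonlinear smallness $\Vert w\tilde g\Vert_\infty\lesssim\e$ plays the same role as the $\gamma$-smallness did in Lemma \ref{lemma:aprori_linear} when absorbing the time-growing factor there.
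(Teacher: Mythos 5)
Your proposal matches the paper's proof in essence: a Banach fixed-point argument in the augmented space $\mathcal{X}_3$, derivation of the pointwise ODE for $\p_x h$ via \eqref{h_eqn}, application of Lemma \ref{lemma:linear_boltz_derivative} to the nonlinear source, and control of $\eqref{aprior_deri}_*$ via \eqref{Gamma_est} combined with Lemma \ref{lemma:velocity_alpha} for the local term and Lemmas \ref{lemma:NLN}, \ref{lemma:NLN_inner} for the convolutional (including $\mathbf{P}_u$-projected) pieces, with the nonlinear smallness $\Vert w\tilde g\Vert_\infty\lesssim\e$ absorbing the $t$-growth factor. The only minor inaccuracy is that the boundary data in \eqref{pen_nonlin_prob_linear} is $g(0,\xi)=f_b(\xi)+h(0)\phi_u(\xi)$ with the \emph{image} $h(0)$ rather than $\tilde h(0)$, but this does not affect the argument since $h(0)$ is itself controlled by $\Vert(\tilde g,\tilde h)\Vert_{\mathcal{X}_1}^2$ via \eqref{h_bdd}.
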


\begin{proof}
We denote a Banach space as
\begin{equation}\label{banach_nonlinear_deri}
\begin{split}
  \mathcal{X}_3  & := \Big\{ (g,h): \Vert w_{\tilde{\theta}} \alpha \p_x g\Vert_{L_{x,\xi}^\infty}  + \Vert \alpha \p_x h\Vert_{L^\infty_{x,\xi}}  +  \Vert (g,h)\Vert_{\mathcal{X}_1} \leq 2C\e      \Big\}   ,
\end{split}
\end{equation}
with the associated norm defined as
\begin{equation*}
\begin{split}
  \Vert (g,h)\Vert_{\mathcal{X}_3}  &  = \Vert w_{\tilde{\theta}}\alpha \p_x g\Vert_{L^\infty_{x,\xi}} + \Vert \alpha \p_x h\Vert_{L^\infty_{x,\xi}} + \Vert (g,h)\Vert_{\mathcal{X}_1}.
\end{split}
\end{equation*}
Here, $\mathcal{X}_1$ is defined in \eqref{nonlinear_banach}. Clearly we have $\Vert (g,h)\Vert_{\mathcal{X}_1}\leq \Vert (g,h)\Vert_{\mathcal{X}_3}$.

Given $(\tilde{g}, \tilde{h})\in \mathcal{X}_3$, we consider the following linear problem with given sources as $\tilde{g},\tilde{h}$:
\begin{equation*}
\begin{cases}
    &(\xi_1+u)\p_x g + \mathcal{L}^p g = e^{-\gamma x} (\mathbf{I}-\mathbf{P}_u)\Gamma(\tilde{g}-\tilde{h}\phi_u, \tilde{g}-\tilde{h}\phi_u)  , \\
    &h(x) = -e^{-\gamma x} \int_0^\infty e^{(\tau_u - 2\gamma)z} \langle \psi_u \Gamma(\tilde{g}-\tilde{h}\phi_u,\tilde{g}-\tilde{h}\phi_u)\rangle(x+z) \dd z,   \\
    & g(0,\xi) = f_b(\xi) + h(0)\phi_u(\xi), \ \ \xi_1+u>0, \\
    & (\xi_1+u) \p_x (\p_x g) + \mathcal{L}^p (\p_x g) = \p_x \Big[e^{-\gamma x}(\mathbf{I}-\mathbf{P}_u)\Gamma(\tilde{g}-\tilde{h}\phi_u, \tilde{g}-\tilde{h}\phi_u) \Big], \\
    & \p_x h(x) = -\p_x \Big[ e^{-\gamma x} \int_0^\infty e^{(\tau_u - 2\gamma)z} \langle \psi_u \Gamma(\tilde{g}-\tilde{h}\phi_u,\tilde{g}-\tilde{h}\phi_u)\rangle(x+z) \dd z \Big].
\end{cases}
\end{equation*}
$h(x)$ can also be expressed as
\begin{align*}
   h(x) &  = -e^{-\gamma x} \int_{x}^\infty e^{(\tau_u - 2\gamma)(z-x)} \langle \psi_u \Gamma(\tilde{g}-\tilde{h}\phi_u, \tilde{g}-\tilde{h}\phi_u) \rangle(z) \dd z \\
   & = -e^{(\gamma - \tau_u)x} \int_x^\infty  e^{(\tau_u-2\gamma)z} \langle \psi_u \Gamma(\tilde{g}-\tilde{h}\phi_u, \tilde{g}-\tilde{h}\phi_u) \rangle (z) \dd z.
\end{align*}
Then we take derivative to have 
\begin{align}
    & \p_x h(x)  = (\gamma - \tau_u)h(x) + e^{-\gamma x}\langle \psi_u  \Gamma(\tilde{g}-\tilde{h}\phi_u, \tilde{g}-\tilde{h}\phi_u)\rangle(x)  \label{h_eqn}  .
\end{align}
From the equation of $h$ in \eqref{h_eqn}, we apply \eqref{alpha_bdd} to have
\begin{align}
  \Vert \alpha \p_x h\Vert_{L^\infty_{x,\xi}}  & \leq \Vert  \p_x h \Vert_{L^\infty_x } \lesssim \Vert h\Vert_{L^\infty_{x}} + \Vert (\tilde{g},\tilde{h})\Vert_{\mathcal{X}_1}^2 \lesssim \Vert (\tilde{g},\tilde{h})\Vert_{\mathcal{X}_1}^2 . \label{p_x_h_bdd}
\end{align}
In the second inequality, we applied \eqref{h_bdd} to the source term $\Gamma$. In the last inequality, we used $\Vert h\Vert_{L^\infty_x} \lesssim \Vert (\tilde{g},\tilde{h})\Vert_{\mathcal{X}_1}^2$ from \eqref{h_bdd}.

The well-posedness of $\p_x g$ is guaranteed by Lemma \ref{lemma:linear_boltz_derivative}, and $\p_x g$ satisfies \eqref{aprior_deri}. The boundary term in \eqref{aprior_deri} can be controlled as
\begin{align}
    & \Vert wg_b\Vert_{L^\infty_\xi} + \Vert w_{\tilde{\theta}}(\xi)Q(0,\xi)\Vert_{L^\infty_\xi} 
    \lesssim \Vert wf_b\Vert_{L^\infty_\xi} + \Vert w\phi_u\Vert_{L^\infty_\xi}\Vert h\Vert_{L^\infty_x} + \Vert (\tilde{g},\tilde{h})\Vert_{\mathcal{X}_1}^2 \lesssim \e + \Vert (\tilde{g},\tilde{h})\Vert_{\mathcal{X}_1}^2. \label{p_x_bdr_term}
\end{align}
Here we applied \eqref{gamma_bdd} and \eqref{P_u_gamma_infty} with $w_{\tilde{\theta}}(\xi) \lesssim \frac{w(\xi)}{1+|\xi|}$ to $Q(0) = (\mathbf{I}-\mathbf{P}_u)\Gamma(\tilde{g}-\tilde{h}\phi_u,\tilde{g}-\tilde{h}\phi_u)(0)$ in the first inequality. In the second inequality, $\e$ comes from the assumption $\Vert wf_b\Vert_{L^\infty}\leq \e$ in Proposition \ref{prop:nonlinear_continuous}, and we used \eqref{h_bdd} to $h$.

Then we evaluate $\eqref{aprior_deri}_*$ with $Q=e^{-\gamma x} (\mathbf{I}-\mathbf{P}_u) \Gamma(\tilde{g}-\tilde{h}\phi_u,\tilde{g}-\tilde{h}\phi_u)$. Note that
\begin{align}
  \p_x Q  &
 = -\gamma e^{-\gamma x} (\mathbf{I}-\mathbf{P}_u)\Gamma(\tilde{g}-\tilde{h}\phi_u, \tilde{g} - \tilde{h}\phi_u) + e^{-\gamma x}(\mathbf{I}-\mathbf{P}_u)\p_x \Gamma(\tilde{g}-\tilde{h}\phi_u, \tilde{g} - \tilde{h}\phi_u).    \label{p_x_Q}    \end{align}

The contribution of the first term on RHS of \eqref{p_x_Q} is bounded as
\begin{align}
    &w_{\tilde{\theta}}(\xi)\int^t_{\max\{0,\mathbf{1}_{\xi_1+u>0}(t-\frac{x}{\xi_1+u})\}} \dd s e^{-\nu(\xi)(t-s)} (\mathbf{I}-\mathbf{P}_u)\Gamma(\tilde{g}-\tilde{h}\phi_u,\tilde{g}-\tilde{h}\phi_u)(x-(\xi_1+u)(t-s),\xi) \notag  \\
    &\lesssim   \big\Vert \frac{w}{[1+|\xi|]} (\mathbf{I}-\mathbf{P}_u)\Gamma\big\Vert_{L^\infty_{x,\xi}} \int^t_{\max\{0,\mathbf{1}_{\xi_1+u>0}(t-\frac{x}{\xi_1+u})\}} \dd s e^{-\nu(\xi)(t-s)}   \lesssim \Vert (\tilde{g},\tilde{h})\Vert_{\mathcal{X}_1}^2. \label{p_x_expo_gamma_bdd}
\end{align}
Here we have applied the same computation in \eqref{g_bdd} with $w_{\tilde{\theta}}(\xi)\lesssim \frac{w(\xi)}{[1+|\xi|]}$.

For $\eqref{aprior_deri}_*$, the contribution of $I\p_x \Gamma$ in the second term of RHS of \eqref{p_x_Q} is bounded as
\begin{align*}
    & w_{\tilde{\theta}}(\xi)\alpha(x,\xi)\int^t_{\max\{0,\mathbf{1}_{\xi_1+u>0}(t-\frac{x}{\xi_1+u})\}} \dd s e^{-\bar{\nu}(\xi)(t-s)} | \p_x \Gamma( \tilde{g} -  \tilde{h}\phi_u,  \tilde{g} -  \tilde{h}\phi_u)(x-(\xi_1+u)(t-s),\xi) |   .
\end{align*}
We apply \eqref{Gamma_est}. The contribution of the first term in \eqref{Gamma_est} is bounded as
\begin{align}
    &  \int^t_{\max\{0,\mathbf{1}_{\xi_1+u>0}(t-\frac{x}{\xi_1+u})\}} \dd s e^{-\bar{\nu}(\xi)(t-s)} [1+|\xi|] \Vert w(\tilde{g}-\phi_u \tilde{h})\Vert_{L^\infty_{x,\xi}} \notag \\
    &\times w_{\tilde{\theta}}(\xi)\alpha(x,\xi)|[\p_x \tilde{g}-\p_x \tilde{h} \phi_u](x-(\xi_1+u)(t-s),\xi)| \notag \\
    &\leq  \Vert (\tilde{g},\tilde{h})\Vert_{\mathcal{X}_1} [\Vert w_{\tilde{\theta}}\alpha \p_x \tilde{g}\Vert_{L^\infty_{x,\xi}}+\Vert \alpha \p_x \tilde{h}\Vert_{L^\infty_{x}}] \int^t_{\max\{0,\mathbf{1}_{\xi_1+u>0}(t-\frac{x}{\xi_1+u})\}} \dd s e^{-\nu_0(\xi)(t-s)/2} [1+|\xi|]  e^{\nu_0(t-s)/4} \notag \\
    &\leq \Vert (\tilde{g},\tilde{h})\Vert_{\mathcal{X}_1} [\Vert w_{\tilde{\theta}} \alpha \p_x \tilde{g}\Vert_{L^\infty_{x,\xi}} + \Vert \alpha \p_x \tilde{h}\Vert_{L^\infty_{x}}] \int^t_{\max\{0,\mathbf{1}_{\xi_1+u>0}(t-\frac{x}{\xi_1+u})\}} \dd s e^{-\nu_0(\xi)(t-s)/4} [1+|\xi|] \notag\\
    &\lesssim \Vert (\tilde{g},\tilde{h})\Vert_{\mathcal{X}_1} [\Vert w_{\tilde{\theta}}\alpha \p_x \tilde{g}\Vert_{L^\infty_{x,\xi}} + \Vert \alpha \p_x \tilde{h}\Vert_{L^\infty_{x}}]. \label{p_x_gamma_bdd}
\end{align}
In the third line we have used $\Vert w_{\tilde{\theta}}(\xi)[1+|\xi|]\phi_u\Vert_{L^\infty_\xi} \lesssim \Vert w\phi_u\Vert_{L^\infty_\xi}\lesssim 1$ and applied Lemma \ref{lemma:velocity_alpha}.

The contribution of the second and third term in \eqref{Gamma_est} are bounded as
\begin{align}
    & w_{\tilde{\theta}}(\xi) \alpha(x,\xi)\Vert w(\tilde{g}-\phi_u \tilde{h})\Vert_{L^\infty_{x,\xi}} \int^t_{\max\{0,\mathbf{1}_{\xi_1+u>0}(t-\frac{x}{\xi_1+u})\}} \dd s e^{-\bar{\nu}(\xi)(t-s)}  \notag \\
    &\times \int_{\mathbb{R}^3} \dd \xi' \mathbf{k}(\xi,\xi') |\p_x [\tilde{g}-\tilde{h}\phi_u](x-(\xi_1+u)(t-s),\xi') | \notag \\
    &\leq  \Vert (\tilde{g},\tilde{h})\Vert_{\mathcal{X}_1} [\Vert w_{\tilde{\theta}}\alpha \p_x \tilde{g}\Vert_{L^\infty_{x,\xi}}+\Vert \alpha \p_x \tilde{h}\Vert_{L^\infty_{x}}] \alpha(x,\xi)\int^t_{\max\{0,\mathbf{1}_{\xi_1+u>0}(t-\frac{x}{\xi_1+u})\}} \dd s e^{-\nu_0(\xi)(t-s)/2} \notag \\
    & \times \int_{\mathbb{R}^3} \frac{w_{\tilde{\theta}}(\xi) \mathbf{k}(\xi,\xi')}{w_{\tilde{\theta}}(\xi')\alpha(x-(\xi_1+u)(t-s),\xi')} \dd \xi' \notag \\
    &\lesssim t \Vert (\tilde{g},\tilde{h})\Vert_{\mathcal{X}_1} [\Vert w_{\tilde{\theta}}\alpha \p_x \tilde{g}\Vert_{L^\infty_{x,\xi}} + \Vert \alpha \p_x \tilde{h}\Vert_{L^\infty_{x}}]  \label{p_x_gamma_bdd_2}.
\end{align}
In the fourth line we have used Lemma \ref{lemma:eigen_continuous} to have $\phi_u(\xi')\leq \frac{\Vert w\phi_u\Vert_{L^\infty_\xi}}{w_{\tilde{\theta}}(\xi')}$. In the last line applied Lemma \ref{lemma:NLN} with \eqref{k_theta_bdd}.

For $\eqref{aprior_deri}_*$, the contribution of $\mathbf{P}_u \p_x \Gamma$ in the second term of the RHS of \eqref{p_x_Q} is controlled as
\begin{align*}
    &   \alpha(x,\xi) \int^t_{\max\{0,\mathbf{1}_{\xi_1+u>0}(t-\frac{x}{\xi_1+u})\}} \dd s e^{-\bar{\nu}(\xi)(t-s)} \Vert w_{\tilde{\theta}}(\xi_1+u)\phi_u\Vert_{L^\infty_\xi} \Vert w\psi_u\Vert_{L^\infty_\xi} \\
    &\times \int_{\mathbb{R}^3} w^{-1}(\xi') \big|\p_x \Gamma(\tilde{g}-\tilde{h}\phi_u,\tilde{g}-\tilde{h}\phi_u)(x-(\xi_1+u)(t-s),\xi') \big| \dd \xi'.
\end{align*}

Again we apply \eqref{Gamma_est}. The contribution of the first term in \eqref{Gamma_est} is bounded as
\begin{align}
    &  \alpha(x,\xi)\Vert w\phi_u\Vert_{L^\infty_\xi} \Vert w\psi_u\Vert_{L^\infty_\xi} \int^t_{\max\{0,\mathbf{1}_{\xi_1+u>0}(t-\frac{x}{\xi_1+u})\}} \dd s e^{-\nu_0(\xi)(t-s)/2}  \notag\\
    & \times \int_{\mathbb{R}^3} w^{-1}(\xi') [1+|\xi|']  \Vert w(\tilde{g}-\tilde{h}\phi_u)\Vert_{L^\infty_{x,\xi}} [\p_x \tilde{g}-\p_x \tilde{h} \phi_u](x-(\xi_1+u)(t-s),\xi') \dd \xi'  \notag\\
    & \lesssim \alpha(x,\xi)\Vert (\tilde{g},\tilde{h})\Vert_{\mathcal{X}_1} [\Vert \alpha \p_x \tilde{g}\Vert_{\lfty} + \Vert \alpha \p_x \tilde{h}\Vert_{\lfty}] \notag\\
    &\times \int^t_{\max\{0,\mathbf{1}_{\xi_1+u>0}(t-\frac{x}{\xi_1+u})\}} \dd s e^{-\nu_0(\xi)(t-s)/2} \int_{\mathbb{R}^3} \frac{w^{-1/2}(\xi')}{\alpha(x-(\xi_1+u)(t-s),\xi')} \dd \xi' \notag \\
    &\lesssim t\Vert (\tilde{g},\tilde{h})\Vert_{\mathcal{X}_1} [\Vert w_{\tilde{\theta}}\alpha \p_x \tilde{g}\Vert_{\lfty} + \Vert \alpha \p_x \tilde{h}\Vert_{\lfty}].  \label{p_x_gamma_P_1}
\end{align}
In the last line we have used Lemma \ref{lemma:NLN_inner}.

The contribution of the second and third term in \eqref{Gamma_est} are bounded as
\begin{align}
    &    \alpha(x,\xi)\Vert (\tilde{g},\tilde{h})\Vert_{\mathcal{X}_1} \int^t_{\max\{0,\mathbf{1}_{\xi_1+u>0}(t-\frac{x}{\xi_1+u})\}} \dd s e^{-\nu_0(\xi)(t-s)/2} \notag \\
    &   \times \int_{\mathbb{R}^3} \dd \xi' w^{-1}(\xi') \int_{\mathbb{R}^3} \dd \xi'' \mathbf{k}(\xi',\xi'') [|\p_x \tilde{g}| + |\p_x \tilde{h}| ](x-(\xi_1+u)(t-s),\xi'') \notag \\
    & \lesssim   \alpha(x,\xi)\Vert (\tilde{g},\tilde{h})\Vert_{\mathcal{X}_1} [\Vert \alpha \p_x \tilde{g}\Vert_{\lfty} + \Vert \alpha \p_x \tilde{h}\Vert_{\lfty}] \notag \\
    &\times     \int^t_{\max\{0,\mathbf{1}_{\xi_1+u>0}(t-\frac{x}{\xi_1+u})\}} \dd s e^{-\nu_0(\xi)(t-s)/2} \int_{\mathbb{R}^3} w^{-1}(\xi') \dd \xi' \int_{\mathbb{R}^3} \frac{\mathbf{k}(\xi',\xi'')}{\alpha(x-(\xi_1+u)(t-s),\xi'')} \dd \xi''  \notag \\
    & \lesssim t\Vert (\tilde{g},\tilde{h})\Vert_{\mathcal{X}_1} [\Vert w_{\tilde{\theta}}\alpha \p_x \tilde{g}\Vert_{\lfty} + \Vert \alpha \p_x \tilde{h}\Vert_{\lfty}]. \label{p_x_gamma_P_2}
\end{align}
In the last line we have applied \eqref{NLN_two} in Lemma \ref{lemma:NLN_inner}.

Collecting \eqref{p_x_expo_gamma_bdd}, \eqref{p_x_gamma_bdd}, \eqref{p_x_gamma_bdd_2}, \eqref{p_x_gamma_P_1} and \eqref{p_x_gamma_P_2}, we conclude that the contribution of $\eqref{aprior_deri}_*$ is bounded as
\begin{align}
    \eqref{aprior_deri}_* \mathbf{1}_{Q=e^{-\gamma x}(\mathbf{I}-\mathbf{P}_u)\Gamma(\tilde{g}-\tilde{h}\phi_u,\tilde{g}-\tilde{h}\phi_u)} \lesssim t \Vert (\tilde{g},\tilde{h})\Vert_{\mathcal{X}_3}^2. \label{deri_gamma_bdd}
\end{align}

It has been proved in \eqref{g_h_bdd} that given $\Vert (\tilde{g},\tilde{h})\Vert_{\mathcal{X}_1}\leq 2C\e$, then $\Vert (g,h)\Vert_{\mathcal{X}_1} \leq 2C \e$. This, combining with the estimate of $\p_x h$ in \eqref{p_x_h_bdd}, the estimate for $\p_x g$ in \eqref{aprior_deri}, \eqref{p_x_bdr_term} and \eqref{deri_gamma_bdd}, implies that for some $C_1=C_1(t)>0$,
\begin{align*}
 \Vert (g,h)\Vert_{\mathcal{X}_3}   &  \leq  \Vert (g,h)\Vert_{\mathcal{X}_1} + \eqref{p_x_h_bdd} + \eqref{p_x_bdr_term} + \eqref{deri_gamma_bdd} \\
    & \leq C_1 \e + C_1 \Vert (\tilde{g},\tilde{h})\Vert_{\mathcal{X}_3}^2 \leq C_1 \e + 4C_1 C^2 \e^2 \leq 2C\e.
\end{align*}
Here we have taken $C$ in \eqref{banach_nonlinear_deri} to be $C=C_1$ and let $\e$ be small enough such that $C_1C^2\e^2 = C_1^3 \e^2  \leq C\e$. Therefore, we conclude that $(g,h)\in\mathcal{X}_3$.

Next we prove the contraction property. Given $(\tilde{g}_1,\tilde{h}_1),(\tilde{g}_2,\tilde{h}_2)\in \mathcal{X}_3$, we denote the corresponding solutions as $(g_1,h_1),(g_2,h_2)$. Then $(g_1-g_2,h_1-h_2)$ satisfies \eqref{nonlinear_difference}, and the derivative satisfies
\begin{equation*}
\begin{cases}
    &  (\xi_1+u)\p_x (\p_x (g_1-g_2)) + \mathcal{L}^p (\p_x (g_1-g_2)) = \p_x \big[e^{-\gamma x}(\mathbf{I}-\mathbf{P}_u)\Sigma \big], \\
    & \p_x[h_1-h_2](x) = -\p_x \Big[e^{-\gamma x}\int_0^\infty e^{(\tau_u-2\gamma)z} \langle \psi_u \Sigma \rangle (x+z) \dd z \Big],    
\end{cases} 
\end{equation*}
here $\Sigma$ is defined in \eqref{nonlinear_difference}.

To estimate $h_1-h_2$, we apply the same computation as \eqref{p_x_h_bdd}, with \eqref{h_12_bdd}, we obtain
\begin{align}
    & \Vert \alpha \p_x (h_1-h_2)\Vert_{L^\infty_{x,\xi}}\leq \Vert \p_x (h_1-h_2)\Vert_{L^\infty_{x,\xi}} \lesssim \Vert h_1-h_2\Vert_{L^\infty_x} + \Vert e^{-\gamma x} \langle \psi \Sigma\rangle\Vert_{L^\infty_{x,\xi}} \notag  \\
    &\lesssim \big\Vert \frac{w}{[1+|\xi|]}\Sigma \big\Vert_{L^\infty_{x,\xi}} \lesssim \e \Vert (\tilde{g}_1-\tilde{g}_2,\tilde{h}_1-\tilde{h}_2)\Vert_{\mathcal{X}_1} .\label{p_x_h_difference_bdd}
\end{align}

To estimate $\p_x (g_1-g_2)$, we apply \eqref{aprior_deri}. First we compute the boundary term with $Q(0)=(\mathbf{I}-\mathbf{P}_u)\Sigma(0)$ as
\begin{align}
    & \Vert w(g_1-g_2)_b\Vert_{L^\infty_\xi} + \Vert Q(0)\Vert_{L^\infty_\xi}\lesssim  \Vert w\phi_u\Vert_{L^\infty_\xi} \Vert h_1-h_2\Vert_{L^\infty_x} + \Vert (\mathbf{I}-\mathbf{P}_u)\Sigma(0)\Vert_{L^\infty_\xi} \notag\\
    & \lesssim  \e \Vert (\tilde{g}_1-\tilde{g}_2 , \tilde{h}_1-\tilde{h}_2)\Vert_{\mathcal{X}_1} + \big\Vert \frac{w}{[1+|\xi|]}\Sigma \big\Vert_{L^\infty_{x,\xi}} + \big\Vert \frac{w}{[1+|\xi|]}\mathbf{P}_u\Sigma \big\Vert_{L^\infty_{x,\xi}} \lesssim \e \Vert (\tilde{g}_1-\tilde{g}_2,\tilde{h}_1-\tilde{h}_2)\Vert_{\mathcal{X}_1}. \label{p_x_diff_bdr}
\end{align}
Here we applied the same estimate for $h_1-h_2$ in \eqref{h_12_bdd} and the same estimate for $\Sigma$ in \eqref{g_12_bdd}.

For $\eqref{aprior_deri}_*$, we express the derivative as
\begin{equation}\label{deri_Sigma}
-\gamma e^{-\gamma x}(\mathbf{I}-\mathbf{P}_u)\Sigma + e^{-\gamma x} (\mathbf{I}-\mathbf{P}_u) (\p_x \Sigma).
\end{equation}
We apply the same estimate as \eqref{p_x_expo_gamma_bdd} to the first term in \eqref{deri_Sigma}, and apply the same estimate as \eqref{deri_gamma_bdd} to the second term in \eqref{deri_Sigma}, this yields
\begin{align}
    &\eqref{aprior_deri}_* \mathbf{1}_{Q=e^{-\gamma x}(\mathbf{I}-\mathbf{P}_u)\Sigma}  \lesssim  [\Vert (\tilde{g}_1,\tilde{h}_1) \Vert_{\mathcal{X}_1} + \Vert (\tilde{g}_2,\tilde{h}_2)\Vert_{\mathcal{X}_1}] \Vert (\tilde{g}_1-\tilde{g}_2 , \tilde{h}_1-\tilde{h}_2)\Vert_{\mathcal{X}_1} \notag\\
  & +  t [\Vert (\tilde{g}_1,\tilde{h}_1) \Vert_{\mathcal{X}_3} + \Vert (\tilde{g}_2,\tilde{h}_2)\Vert_{\mathcal{X}_3}]\Vert (\tilde{g}_1-\tilde{g}_2,\tilde{h}_1-\tilde{h}_2)\Vert_{\mathcal{X}_3}   \notag \\
 & \lesssim t\e \Vert (\tilde{g}_1-\tilde{g}_2,\tilde{h}_1-\tilde{h}_2)\Vert_{\mathcal{X}_3}.    \label{p_x_Sigma_bdd}
\end{align}

Finally, we collect \eqref{p_x_h_difference_bdd}, \eqref{aprior_deri}, \eqref{p_x_diff_bdr}, \eqref{p_x_Sigma_bdd} and \eqref{chi_1_contraction} to conclude that
\begin{align*}
    & \Vert (g_1-g_2,h_1-h_2)\Vert_{\mathcal{X}_3} = \Vert w_{\tilde{\theta}}\alpha \p_x (g_1-g_2)\Vert_{L^\infty_{x,\xi}} + \Vert \alpha \p_x (h_1-h_2)\Vert_{L^\infty_x} + \Vert (g_1-g_2,h_1-h_2) \Vert_{\mathcal{X}_1} \\
    & \lesssim \Vert w(g_1-g_2)\Vert_{L^\infty_{x,\xi}}  + t\e \Vert (\tilde{g}_1-\tilde{g}_2,\tilde{h}_1-\tilde{h}_2)\Vert_{\mathcal{X}_3} \\
& \leq C_2\e \Vert (\tilde{g}_1-\tilde{g}_2,\tilde{h}_1-\tilde{h}_2)\Vert_{\mathcal{X}_3}.
\end{align*}
Here $C_2$ depends on $t$. With small enough $\e$ such that $C_2(t)\e < 1$, we conclude the proposition with Banach fixed point theorem.

\end{proof}

\ \\

\subsection{Proof of Theorem \ref{thm:weight_C1}}
The unique solution to \eqref{equation_f} is constructed as
\[f(x,\xi) = e^{-\gamma x}g(x,\xi) - e^{-\gamma x}h(x)\phi_u(\xi).\]
With $h(x)$ and $g(x)$ satisfying \eqref{penalized_C1}, we take $x$-derivative and have
\begin{align*}
   \Vert w_{\tilde{\theta}}\alpha \p_x f \Vert_{L^\infty_{\xi}} & \lesssim e^{-\gamma x} [\Vert wg\Vert_{L^\infty_{x,\xi}} + \Vert w \phi_u\Vert_{L^\infty_\xi} \Vert h\Vert_{L^\infty_x} ] \\
   & + e^{-\gamma x} [\Vert w_{\tilde{\theta}} \alpha \p_x g \Vert_{L^\infty_{x,\xi}} + \Vert w\phi_u\Vert_{L^\infty_\xi} \Vert  \alpha \p_x h\Vert_{L^\infty_x}] \lesssim e^{-\gamma x}\e.
\end{align*}
We conclude the theorem with some $\bar{C}>0$.

In summary, we have
\begin{align}\label{weighted_C1_2}
    & \Vert e^{\gamma x} w_{\tilde{\theta}} \alpha \p_x f\Vert_{L^\infty_{x,\xi}} \lesssim \e.
\end{align}
This verifies \eqref{weighted_C1}.

Then we prove \eqref{W1p}. For $\gamma_0 < \gamma$, \eqref{weighted_C1_2} leads to 
\begin{align}
    & \Vert w_{\tilde{\theta}/2} e^{\gamma_0 x}\p_x f\Vert_{L^{p}_{x,\xi}} =\Big(\int_{0}^\infty \int_{\mathbb{R}^3}  
 e^{p\tilde{\theta}|\xi|^2/2} e^{p\gamma_0 x} |\p_x f(x,\xi)|^p \dd \xi \dd x\Big)^{1/p}  \notag \\
    & \lesssim \Vert e^{\gamma x} w_{\tilde{\theta}} \alpha \p_x f\Vert_{L^\infty_{x,\xi}} \Big(\int_{0}^\infty \int_{\mathbb{R}^3}  \frac{e^{-(\gamma-\gamma_0) px}w^{-p}_{\tilde{\theta}/2}(\xi)}{\alpha^p(x,\xi)}     \dd \xi \dd x\Big)^{1/p}. \label{W1p_first_bdd}
\end{align}
From the definition of $\alpha(x,\xi)$ in \eqref{kinetic_weight}, when $\xi_1 \geq 1$ or $x\geq 1$, we have $\alpha(x,\xi)\gtrsim 1$. Then we proceed the computation as
\begin{align}
  \eqref{W1p_first_bdd}  & \lesssim \e \Big( \int_{\mathbb{R}^2}\dd \xi_2\dd \xi_3 w^{-p}_{\tilde{\theta}/2}(\xi_2)w^{-p}_{\tilde{\theta}/2}(\xi_3) \int_{0}^\infty  \int_{\mathbb{R}}  \frac{e^{-(\gamma-\gamma_0) px}w^{-p}_{\tilde{\theta}/2}(\xi_1)}{\alpha^p(x,\xi)}     \dd \xi_1 \dd x\Big)^{1/p} \notag\\
  & \lesssim \e \Big(\int_{|\xi_1|\geq 1 \text{ or }x\geq 1} e^{-(\gamma-\gamma_0) px} w_{\tilde{\theta}/2}^{-p}(\xi_1) \dd \xi_1 \dd x + \int_{0}^1 \int_{0}^1  \frac{1}{\big[|\xi_1+u|^2 + (c\nu_0)^2 x^2\big]^{p/2}}     \dd \xi_1 \dd x\Big)^{1/p}  \notag \\
  &\lesssim \e \Big(1+\int_{0}^1 \int_{0}^1  \frac{1}{\big[|\xi_1|^2 +  x^2\big]^{p/2}}     \dd \xi_1 \dd x\Big)^{1/p} .\label{W1p_second_bdd}
\end{align}
In the last inequality we apply the change of variable $\xi_1+u\to u$ and $c\nu_0 x \to x$. Note that the integral domain becomes $(x,\xi_1)\in [0,1]\times [0,1]\subset \{(x,\xi_1)|x^2+\xi_1^2 \leq 4\}$, then we apply the polar coordinate $x=r\cos\theta, \ \xi_1 = r\sin\theta$ to have
\begin{align*}
  \eqref{W1p_second_bdd}  & \lesssim \e  \Big(\int_0^{2\pi}\int_0^2   \frac{1}{r^p(|\cos^p \theta| + |\sin^p\theta| )} r \dd r  \dd \theta   \Big)^{1/p}  \\
  &\lesssim   \e \Big(\int_0^{2\pi} \frac{1}{|\cos^p \theta| + |\sin^p\theta|} \dd \theta\Big)^{1/p} \lesssim \e.
\end{align*}
In the last line, we have used $p<2$. We conclude the $W^{1,p}$ estimate \eqref{W1p}.

To prove \eqref{local_H1}, similar to the computation in \eqref{W1p_second_bdd}, we only focus on the integration over $[\delta,1]\times [0,1]$, we have
\begin{align*}
    & \int_{\delta}^\infty \int_{\mathbb{R}^3} w_{\tilde{\theta}}(\xi) e^{2\gamma_0 x} |\p_x f|^2 \dd \xi \dd x \\
    & \lesssim \e + \e\int_{0}^1 \int_{\delta}^1 \frac{1}{|\xi_1|^2 + x^2} \dd \xi_1 \dd x = \e + \e\int_{\delta}^1\frac{1}{x}\arctan(\frac{1}{x}) \dd x \lesssim C_\delta \e.   
\end{align*}
We conclude the theorem.

\ \\

\appendix

\section{Definition of \eqref{extra_assumption}}\label{appendix}
In the appendix we give the definition of the addition assumptions \eqref{extra_assumption}. To be specific, $Y_1[u]$ and $Y_2[u]$ are defined in \eqref{def_Y}, and $\mathfrak{R}_{u,\gamma}$ is defined in \eqref{def_R}.

The original version of the linearized penalized operator \eqref{pen_lin_op} has extra coefficients to be determined:
\begin{equation}\label{origin_operator}
\mathcal{L}^p g = \mathcal{L}p + \alpha \prod_+((\xi_1+u)g) + \beta \mathbf{p}_u g - \gamma (\xi_1+u)g.
\end{equation}
In order to remove the extra penalization terms, \cite{golse} proposed the following lemma:
\begin{lemma}[Lemma 4.3 in \cite{golse}]\label{lemma:remove}
Let
\begin{align*}
    \mathcal{A} := \begin{bmatrix}
       \alpha & 0  & -u \beta \langle \psi_u X_+\rangle \\
       0 & 0 &  -\beta  \langle \phi_u X_0 \rangle  \\
       \alpha \langle \psi_u X_+ \rangle & \frac{1}{u}\tau_u  & \tau_u - \beta \langle \psi_u \phi_u\rangle
     \end{bmatrix}
\end{align*}
For $|u|\ll 1$, $\mathcal{A}$ has three different eigenvalues. Let $(l_1,l_2,l_3)$ be a real basis of left eigenvectors of $\mathcal{A}$, define
\begin{equation}\label{def_Y}
\begin{split}
   Y_1[u](\xi) := & (X_+(\xi),X_0(\xi),\psi_u(\xi)) \cdot l_1(u) \\
   Y_2[u](\xi) := & (X_+(\xi),X_0(\xi),\psi_u(\xi)) \cdot l_2(u) .
\end{split}
\end{equation}
Then for $g$ satisfying \eqref{pen_prob} with $\mathcal{L}^p$ given by \eqref{origin_operator}, one has
\begin{align*}
    \langle (\xi_1+u)X_+ g\rangle = \langle (\xi_1+u)\psi_u g\rangle = 0 \Leftrightarrow \begin{cases}
        \langle (\xi_1+u)Y_1[u]g\rangle|_{x=0} = 0 \\
         \langle (\xi_1+u)Y_2[u]g\rangle|_{x=0} = 0 \end{cases}.
\end{align*}

\end{lemma}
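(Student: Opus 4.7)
The plan is to reduce the equivalence to a $3\times 3$ linear-algebra calculation by taking moments of the penalized equation \eqref{pen_prob} against $X_+$, $X_0$, and $\psi_u$ in $\xi$.

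First I would introduce the vector of moments $v(x) := \bigl(\langle(\xi_1+u)X_+\,g\rangle,\;\langle(\xi_1+u)X_0\,g\rangle,\;\langle(\xi_1+u)\psi_u\,g\rangle\bigr)^{\!\top}$ and derive a closed linear ODE $\partial_x v = \mathcal{A} v$. The inputs are: (i) $X_+, X_0 \in \ker\mathcal{L}$ together with self-adjointness of $\mathcal{L}$, which kill the $\mathcal{L}g$ contribution in the first two moments; (ii) the identity $\mathcal{L}\psi_u = \tfrac{\tau_u}{u}(\xi_1+u)\phi_u$, which follows from $\mathcal{L}\phi_u = \tau_u(\xi_1+u)\phi_u$ together with $\mathcal{L}\phi_0 = 0$ (the latter holding because $|\tau_u|\sim u$ forces $\tau_0 = 0$); (iii) the orthogonality of $X_+, X_0$ under the bilinear form $(f,g)\mapsto\langle\xi_1 fg\rangle$ recorded in Section~\ref{sec:prelim}. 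The stray moment $\langle(\xi_1+u)\phi_u g\rangle$ that appears when testing against $\psi_u$ is reduced back into $v$ through $\phi_u = \phi_0 + u\psi_u$, expressing $\phi_0$ in terms of the basis of $\ker\mathcal{L}$; this is precisely what produces both the coefficient $\tau_u/u$ in position $(3,2)$ and the explicit factor of $u$ in position $(1,3)$ of $\mathcal{A}$.

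Second, I would verify that $\mathcal{A}$ has three distinct eigenvalues for $|u|\ll 1$. At $u = 0$ the $(1,3)$ entry vanishes, so $\mathcal{A}\big|_{u=0}$ is block-triangular: the $1\times 1$ block contributes the eigenvalue $\alpha$, while the remaining $2\times 2$ block gives two further eigenvalues obtainable from an explicit quadratic. For the values of $\alpha,\beta,\gamma$ fixed in \cite{golse} these three numbers are pairwise distinct. Real-analyticity of $u\mapsto(\tau_u,\phi_u)$ from Proposition~\ref{prop:eigen_exists}, combined with the fact that $\tau_u/u$ has a removable singularity at $u=0$, makes the entries of $\mathcal{A}$ real-analytic in $u$; continuity of the spectrum preserves distinctness on a neighborhood of $0$.

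Finally, with three distinct eigenvalues $\lambda_1,\lambda_2,\lambda_3$ and an associated basis of left eigenvectors $l_1,l_2,l_3$, the scalar quantities $w_i(x):=l_i\cdot v(x)$ satisfy $w_i(x)=w_i(0)e^{\lambda_i x}$. Boundedness of $v$ (which we are entitled to use for solutions of the penalized problem) forces $v(0)$ to lie in the stable subspace of $\mathcal{A}$; by the sign pattern of the spectrum at $u=0$, this subspace is $2$-dimensional. Within this subspace, biorthogonality between $\{l_1,l_2\}$ (chosen to correspond to the stable eigenvalues and to produce $Y_1,Y_2$ via \eqref{def_Y}) and the stable right eigenvectors shows that $w_1(0) = w_2(0) = 0$ is equivalent to $v(0) = 0$, hence to $v\equiv 0$, hence to $\langle(\xi_1+u)X_+ g\rangle \equiv \langle(\xi_1+u)\psi_u g\rangle \equiv 0$. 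Translating through the definition of $Y_1, Y_2$ gives the claimed equivalence. The main obstacle I anticipate is step one: a careful bookkeeping of the moment computation, in particular verifying that $\langle(\xi_1+u)\phi_u g\rangle$ reduces cleanly back into $v$ via $\phi_u = \phi_0 + u\psi_u$ and produces exactly the entries of $\mathcal{A}$ (including the subtle $(3,2)$ entry $\tau_u/u$). Once the matrix is identified, the spectral analysis and the equivalence follow from standard linear algebra.
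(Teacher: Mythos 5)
The paper does not prove Lemma~\ref{lemma:remove}: it is stated verbatim as a citation of Lemma~4.3 in Bernhoff--Golse~\cite{golse}, and none of the steps in your sketch appear anywhere in the present manuscript. There is therefore no in-paper argument to compare against; you were, in effect, reconstructing Bernhoff and Golse's proof from the statement alone.

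Taken on its own terms, your sketch reproduces the expected mechanism (take moments against $X_+$, $X_0$, $\psi_u$; close a linear $3\times 3$ ODE $\partial_x v = \mathcal{A}v$ using self-adjointness of $\mathcal{L}$, $X_+,X_0\in\ker\mathcal{L}$, $\mathcal{L}\phi_u=\tau_u(\xi_1+u)\phi_u$, $\phi_u=\phi_0+u\psi_u$; use distinct eigenvalues, decay, and biorthogonality), but three points need correcting before it is watertight. First, $\mathcal{A}\big|_{u=0}$ is not block-triangular: the $(3,1)$ entry $\alpha\langle\psi_0 X_+\rangle$ survives. What you actually have is that the first row of $\mathcal{A}-\lambda I$ becomes $(\alpha-\lambda,0,0)$, so the characteristic polynomial factors; say that, not ``block-triangular.'' Second, you never address the source $Q$ in \eqref{pen_prob}. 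Your ODE is homogeneous only if $\langle X_+Q\rangle=\langle X_0Q\rangle=\langle\psi_u Q\rangle\equiv 0$; since $\psi_u\notin\ker\mathcal{L}$, the hypothesis $Q\perp\ker\mathcal{L}$ of Proposition~\ref{prop:pen_well_pose} does not control the third moment, and this has to be handled by the specific structure of the penalization in~\cite{golse}. Third, the left-hand condition in the equivalence is $v_1\equiv v_3\equiv 0$, not $v\equiv 0$. To run your ``$v(0)=0\Leftrightarrow v\equiv0$'' reduction you must first note that $v_1\equiv v_3\equiv 0$ forces $v_2\equiv 0$ through the $(3,2)$ entry $\tau_u/u\neq 0$ of $\mathcal{A}$; without that bridge the two sides of the equivalence live on different sets of conditions. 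You should also make explicit that $l_1,l_2$ are the left eigenvectors dual to the decaying modes, since the lemma as stated just says ``a basis.'' With these gaps filled the outline is plausible, but I cannot certify agreement with~\cite{golse} at the level of signs and coefficients, because the present paper supplies no proof to check against.
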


Taking $\alpha = \beta = 2\gamma$ as in \eqref{pen_lin_op}, we obtain a unique solution $(g,h)$ to the nonlinear penalized problem \eqref{pen_nonlin_prob} as in Proposition \ref{prop:nonlinear_wellpose} and Proposition \ref{prop:nonlinear_continuous}. Define
\begin{equation}\label{def_R}
\mathfrak{R}_{u,\gamma}[f_b](\xi) :=  g(0,\xi) , \ \ \xi\in \mathbb{R}^3.
\end{equation}
Then removing the penalization in the nonlinear penalized problem \eqref{pen_nonlin_prob} is equivalent to imposing the condition in Lemma \ref{lemma:remove}:
\begin{align*}
    \langle (\xi_1+u)Y_1[u] \mathfrak{R}_{u,\gamma}[f_b] \rangle = \langle (\xi_1+u)Y_2[u] \mathfrak{R}_{u,\gamma}[f_b]\rangle = 0,
\end{align*}
which is exactly \eqref{extra_assumption}.

\ \\

\noindent \textbf{Acknowledgement.} 
The author thanks Professor Chanwoo Kim for suggesting the problem and engaging in stimulating discussion. HC is supported by NSF 2047681 and GRF grant (2130715) from RGC of Hong Kong. HC thanks the host from the Chinese University of Hong Kong, and thanks Professor Kung-Chien Wu for helpful discussion.

\bibliographystyle{siam}
\bibliography{citation}

\end{document}